\documentclass[a4paper,12pt,fullpage]{article}

\usepackage{amssymb}
\usepackage{amsmath, amsthm}
\usepackage{epsfig}
\usepackage{xcolor}
\usepackage{ascmac} 
\usepackage{appendix}
\usepackage{todonotes}
\usepackage{hyperref}

\tolerance = 1500
\hoffset = 0pt
\voffset = 0pt
\textwidth = 470pt
\textheight = 640pt
\topmargin = 0pt
\headheight = 0pt
\headsep = 0pt
\oddsidemargin = 0pt
\evensidemargin = 0pt
\marginparwidth = 10pt
\marginparsep = 10pt
\numberwithin{equation}{section}

\newtheorem{Lemma}{Lemma}[section]
\newtheorem{Cor}[Lemma]{Corollary}
\newtheorem{Th}[Lemma]{Theorem}
\newtheorem{Thm}{Theorem}
\newtheorem{Prop}[Lemma]{Proposition}
\newtheorem{Def}[Lemma]{Definition}
\newtheorem{Rk}[Lemma]{Remark}

\newcommand{\be}{\begin{equation}}
\newcommand{\ee}{\end{equation}}
\newcommand{\baa}{\begin{array}}
\newcommand{\eaa}{\end{array}}

\def\R{{\mathbb R}}
\def\N{{\mathbb N}}

\def\ep{\varepsilon}


\begin{document}

\title{\bf{Front propagation through a perforated wall}}
\author{Henri Berestycki$^{\hbox{\small{ a,b,c}}}$, 
Fran\c cois Hamel$^{\hbox{\small{ d}}}$ and 
Hiroshi Matano$^{\hbox{\small{ e}}}$\\
\date{}\\
\footnotesize{$^{\hbox{a }}$EHESS, CAMS, 54 boulevard Raspail, F-75006 Paris, France}\\
\footnotesize{$^{\hbox{b }}$ Department of Mathematics, University of Maryland, College Park, USA}\\
\footnotesize{$^{\hbox{c }}$ Senior Visiting Fellow, Institute of Advanced Study, HKUST, Hong Kong}\\
\footnotesize{$^{\hbox{d }}$Aix Marseille Univ, CNRS, I2M, Marseille, France}\\
\footnotesize{$^{\hbox{e }}$Meiji University, Meiji Institute for Advanced Study of Mathematical Sciences,}\\
\footnotesize{4-21-1 Nakano, Tokyo 164-8525, Japan}
\thanks{This work has received funding from the French ANR Project ANR-23-CE40-0023-01 ReaCh.  H.M is partially supported by KAKENHI (16H02151, 21H00995) }}
\maketitle

\begin{abstract}
We consider a bistable reaction-diffusion equation $u_t=\Delta u +f(u)$ on $\R^N$ in the presence of an obstacle $K$, which is a wall of infinite span with many holes. More precisely, $K$ is a closed subset of $\R^N$ with smooth boundary such that its projection onto the $x_1$-axis is bounded and that $\R^N \setminus K$ is connected. Our goal is to study what happens when a planar traveling front coming from $x_1 = -\infty$ meets the wall $K$.

We first show that there is clear dichotomy between \lq\lq propagation\rq\rq and ``blocking''.  In other words, the traveling front either passes through the wall and propagates toward $x_1=+\infty$ (propagation) or is trapped around the wall (blocking), and that there is no intermediate behavior. This dichotomy holds for any type of walls of finite thickness. Next we discuss sufficient conditions for blocking and propagation. For blocking, assuming either that $K$ is periodic in $y:=(x_2,\ldots, x_N)$ or that the holes are localized within a bounded area, we show that blocking occurs if the holes are sufficiently narrow. For propagation, three different types of sufficient conditions for propagation will be presented, namely ``walls with large holes'', ``small-capacity walls'', and ``parallel-blade walls''. We also discuss complete and incomplete invasions.
\end{abstract}

\tableofcontents


\section{Introduction}\label{s:introduction}

We consider a reaction-diffusion equation on $\R^N$, $N\geq 2$, in the presence of obstacles. The problem is formulated as follows:
\begin{equation}\label{E}
\left\{
\begin{array}{ll}
u_t=\Delta u +f(u), \ \ &x\in \Omega:=\R^N\setminus K,\\[4pt]
\dfrac{\partial u}{\partial \nu} = 0, \ \ &x\in \partial\Omega,
\end{array}\right.
\end{equation}
where $f\in C^1$ is a bistable nonlinearity satisfying, for some $0<\alpha<1$,
\begin{equation}\label{f}
f(0)=f(\alpha)=f(1)=0, \ \ f'(0)<0,\ f'(\alpha)>0, \ f'(1)<0, \ \ \ \int_0^1 f(s)ds>0,
\end{equation}
and the obstacle $K$ is a closed set with uniformly smooth boundary satisfying
\begin{equation}\label{K}
K\subset\{x\in\R^N\mid 0\leq x_1\leq M\}, \quad\ \Omega:=\R^N\setminus K\ \hbox{is connected},
\end{equation}
for some constant $M>0$. Here and in what follows we shall use the notation
\[
x=(x_1,x_2,\ldots,x_N)=(x_1,y),\quad y=(x_2,\ldots,x_N).
\]
By ``uniformly smooth'', we mean that there exists $\delta>0$ such that for every point $x^*\in\partial\Omega=\partial K$, the sets $\partial\Omega\cap\{|x-x^*|\leq \delta\}$ and $\Omega\cap\{|x-x^*|\leq \delta\}$ can respectively be expressed locally as a graph and a subgraph of a smooth function whose derivatives have uniform bounds that do not depend on $x^*$. We call $\nu$ the outward unit normal to $\Omega$ on $\partial\Omega$.

The condition in \eqref{f} guarantees that the one-dimensional equation
\[
u_t=u_{xx}+f(u) \ \ (x\in\R)
\]
possesses a traveling wave solution of the form $\phi(x-ct)$ where $c$ is a positive constant and the profile function $\phi$ satisfies
\begin{equation}\label{phi}
\left\{
\begin{array}{l}
\phi''(z)+c\hspace{0.5pt}\phi'(z)+f(\phi(z))=0\ \ (z\in\R),\\[6pt]
0<\phi<1,\ \ \phi(-\infty)=1,\ \phi(+\infty)=0.
\end{array}\right.
\end{equation}
It is known that $c$ is unique and the traveling wave profile $\phi$ is unique up to translation (\cite{FM}). Hereafter we set
\begin{equation}\label{phi(0)}
\phi(0)=\alpha.
\end{equation}
Then \eqref{phi} and \eqref{phi(0)} determine the function $\phi$ uniquely. Using the same function $\phi$, one can construct a special solution of the equation $u_t=\Delta u+f(u)$ on $\R^N$ of the form
\begin{equation}\label{planar}
u(t,x)=u(t,x_1,x_2,\ldots,x_N)=\phi(x_1-ct),
\end{equation}
which we call the {\it planar front solution}. The goal of the present paper is to study the behavior of the planar front solution in the presence of the wall $K$. 

To formulate this question more precisely, we have to first construct a solution of \eqref{E} that behaves like the planar front solution ~\eqref{planar} when $t$ is sufficiently negative and whose front approaches $K$ as time passes. The following theorem guarantees the existence of such a solution. Note that this theorem holds for any type of obstacle $K$ as long as it lies in the right half space $x_1\geq 0$. Therefore the thickness of $K$ need not be finite.

\begin{Thm}\label{th:ubar-phi}
Assume simply that $K\subset \{x\in\R^N\mid x_1\geq 0\}$ and that the boundary of $\Omega=\R^N\setminus K$ is uniformly smooth. Then here exists a unique entire solution $\bar u$ of~\eqref{E} satisfying $0<\bar u<1 \;(x\in\overline{\Omega}, t\in\R)$ and 
\begin{equation}\label{ubar-phi}
\lim_{t\to-\infty}\sup_{x\in\Omega} |\bar u(t,x)-\phi(x_1-ct)|=0. 
\end{equation}
This solution satisfies $\bar u_t>0$ for all $x\in\overline{\Omega}, \,t\in\R$.
\end{Thm}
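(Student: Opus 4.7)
The plan is a standard time-approximation scheme. For each integer $n\geq 1$, let $u_n$ denote the classical solution of \eqref{E} on $(-n,+\infty)\times\overline\Omega$ with initial datum $u_n(-n,x)=\phi(x_1+cn)$; existence and instantaneous smoothing on the uniformly smooth domain $\Omega$ follow from standard parabolic theory for Neumann problems, even though the initial datum need not be compatible with the boundary condition. Comparison with the steady states $0$ and $1$ gives $0<u_n<1$. Since
\[
\Delta\phi(x_1+cn)+f(\phi(x_1+cn))=-c\,\phi'(x_1+cn)>0,
\]
the initial datum is a strict subsolution of the stationary problem, so comparing $u_n(t+h,\cdot)$ with $u_n(t,\cdot)$ for small $h>0$ yields $\partial_t u_n>0$ via the strong maximum principle and Hopf's lemma. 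Uniform local parabolic Schauder estimates (interior and up to the smooth boundary) then produce, by diagonal extraction, a subsequence converging in $C^{1,2}_{\mathrm{loc}}(\R\times\overline\Omega)$ to an entire solution $\bar u$ of \eqref{E} with $0\leq\bar u\leq1$ and $\bar u_t\geq 0$; strict inequalities follow from the strong maximum principle.

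The main step, and what I expect to be the principal obstacle, is the asymptotic \eqref{ubar-phi}. The planar profile $v(t,x):=\phi(x_1-ct)$ solves the PDE pointwise but carries a Neumann defect $\partial_\nu v=\nu_1\,\phi'(x_1-ct)$ on $\partial\Omega$. The key fact, coming from the linearisation of the ODE \eqref{phi} at $0$, is that $\phi'(z)$ decays exponentially as $z\to+\infty$ at some rate $\lambda>0$; combined with $\partial\Omega\subset\{0\leq x_1\leq M\}$, this makes the defect uniformly $O(e^{\lambda c\,t})$ on $\partial\Omega$ as $t\to-\infty$. I would then construct Fife--McLeod-type barriers
\[
w^\pm(t,x)=\phi\bigl(x_1-ct\mp q(t)\bigr)\pm\delta(t)
\]
with $q,\delta>0$ satisfying the usual ODEs so that $w^\pm$ are super/subsolutions of the PDE on $\R\times\R^N$ and $q(t),\delta(t)\to 0^+$ exponentially as $t\to-\infty$. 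The exponentially small slack $\delta$ is used, via a further boundary corrector of the form $\pm C\,e^{\lambda c\,t}\eta(x)$ with $\eta\in C^2(\overline\Omega)$ a bounded smooth extension of $-\nu_1$ from $\partial\Omega$, to absorb the Neumann defect of $w^\pm$ and turn them into super/subsolutions of the full Neumann problem in $\overline\Omega$. Comparing $u_n$ with $w^\pm$ on $[-n,T]\times\overline\Omega$ then gives $|u_n(t,x)-\phi(x_1-ct)|\leq C\,e^{\lambda c\,t}$ uniformly in $n$ and $x$, and letting $n\to\infty$ yields \eqref{ubar-phi}.

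For uniqueness, any two entire solutions $\bar u,\tilde u$ satisfying \eqref{ubar-phi} are trapped between the same barriers $w^\pm$ shifted by arbitrarily small amounts; the sliding method and strong maximum principle on $\overline\Omega$ then force $\tilde u\equiv\bar u$. Finally, $\bar u_t>0$ passes to the limit from $\partial_t u_n>0$, with strict positivity at each $(t,x)$ ensured by the strong maximum principle applied to the linearised Neumann equation satisfied by $\bar u_t$.
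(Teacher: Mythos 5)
Your overall structure (approximate by Cauchy problems starting at $t=-n$, pass to the limit, then trap between Fife--McLeod barriers) is in the same spirit as the paper, but there are two genuine gaps, one of which is serious enough to break the argument.

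\textbf{The monotonicity claim $\partial_t u_n>0$ is unjustified, and is in fact the same type of error the paper is explicitly fixing.} You choose $u_n(-n,x)=\phi(x_1+cn)$ and argue that, since $\Delta\phi(x_1+cn)+f(\phi(x_1+cn))=-c\,\phi'(x_1+cn)>0$, the initial datum is a strict subsolution of the stationary problem, hence $u_n$ is increasing in $t$. But a subsolution of the \emph{Neumann} problem must also satisfy $\partial_\nu u_0\le0$ on $\partial\Omega$, and $\partial_\nu\bigl(\phi(x_1+cn)\bigr)=\nu_1\,\phi'(x_1+cn)$ is \emph{positive} on the portion of $\partial K$ where $\nu_1<0$ (i.e.\ the rear face of the obstacle), because $\phi'<0$. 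On that part of the boundary the initial datum violates the Neumann subsolution inequality, so the standard comparison with time-translates of $u_n$ does not apply, and indeed one expects $u_n$ to initially \emph{decrease} near the rear face. The paper's own $w^-$ is constructed precisely to avoid this: it is constant in $x$ on $\overline\Omega\setminus\overline H$, hence trivially satisfies $\partial_\nu w^-=0$ on $\partial\Omega$, so no Neumann defect arises; and even so, Remark~\ref{rk:historical} records that the naive claim $(u_n)_t>0$ from such a construction was a known gap in \cite{BHM1}. The paper therefore abandons the idea of getting monotonicity from the construction and instead derives $\bar u_t>0$ \emph{a posteriori} from the asymptotic property \eqref{ubar-phi}, via the ancient-solutions comparison Lemma~\ref{lem:comparison-ancient}. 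Your proposal has no analogue of that lemma, so the monotonicity step does not close.

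\textbf{The boundary corrector does not address the Neumann defect as written.} You propose to absorb the defect $\partial_\nu\phi(x_1-ct\mp q(t))=\nu_1\,\phi'(\cdot)$ by adding $\pm C e^{\lambda ct}\eta(x)$ with $\eta$ a bounded $C^2$ extension of $-\nu_1$ from $\partial\Omega$. But what enters the Neumann inequality for the corrected barrier is $\partial_\nu\eta$, not $\eta|_{\partial\Omega}$; prescribing the boundary \emph{values} of $\eta$ gives no control over $\partial_\nu\eta$. One would instead need to build $\eta$ with a prescribed normal derivative on $\partial\Omega$, keep it bounded on the unbounded set $\overline\Omega$, and then absorb the extra $\Delta\eta$ and lower-order terms in the interior differential inequality; none of this is automatic and it is exactly the difficulty the paper sidesteps by choosing $w^\pm$ to be $x$-independent on the set $\{x_1\ge0\}$ containing $\partial\Omega$. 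Finally, your uniqueness argument (``trapped between $w^\pm$ shifted by arbitrarily small amounts, then sliding'') only controls $\bar u-\tilde u$ as $t\to-\infty$; turning that into $\bar u\equiv\tilde u$ on all of $\R\times\overline\Omega$ requires a propagation argument of the kind in Lemma~\ref{lem:comparison-ancient}, which also needs to handle the regions where the solutions are near the unstable equilibrium $\alpha$, not just where they are near $0$ or $1$. As sketched, your uniqueness step does not close either.
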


\begin{Rk}\label{rk:Lemma1}
In our previous paper \cite[Theorem 2.1]{BHM1}, we stated basically the same result as above, except that we considered left-bound traveling waves of the form $\phi(x_1+ct)$ in \cite{BHM1}, while in the present paper we consider right-bound traveling waves that are given in the form $\phi(x_1-ct)$.  Apart from this sign difference, the analysis remains the same. However, as there was a small gap in the proof of \cite[Theorem 2.1]{BHM1} concerning the claim $u_t>0$, we give a complete proof of the theorem in Section \ref{s:ubar-phi}. 
\end{Rk}

Since the solution $\bar u$ is monotone increasing in $t$, the following limit exists, which we call {\it the limit profile}:
\begin{equation}\label{vbar}
\bar{v}(x):=\lim_{t\to+\infty} \bar u(t,x)\quad \hbox{(limit profile)}.
\end{equation}
This function $\bar{v}$ is a solution of the following stationary problem:
\begin{equation}\label{S}
\left\{
\begin{array}{ll}
\Delta v +f(v)=0, \ \ &x\in \Omega:=\R^N\setminus K \\[4pt]
\dfrac{\partial v}{\partial \nu} = 0, \ \ &x\in \partial\Omega.
\end{array}\right.
\end{equation}
The long-time behavior of the solution $\bar{u}$ can be understood from this limit profile $\bar{v}$.

Since $\bar{v}$ is the limit of $\bar{u}$ satisfying \eqref{ubar-phi} and $\bar{u}_t>0$, it clearly has the following property:
\begin{equation}\label{vbar-1}
0<\bar{v}\leq 1\ \ \hbox{in}\ \ \Omega,\quad\  \lim_{x_1\to -\infty}\bar{v}(x)=1.
\end{equation}
As we shall see later in Theorem \ref{th:dichotomy}, either of the following althernatives holds, which we call ``propagation'' and  ``blocking'', and there is no intermediate behavior:
\begin{equation}\label{pro-block}
\lim_{x_1\to +\infty} \bar{v}(x_1,y)=
\begin{cases}
\, 1\ \ & \hbox{\rm (propagation)},\\[2pt] 
\, 0\ \ &\hbox{\rm (blocking)}.
\end{cases}
\end{equation}
Furthermore, quite importantly, the above convergence is uniform regardless of the choice of the wall $K$ so long as it is confined in the region $\{0\leq x_1\leq M\}$. As a consequence of this uniformity, one can show that the limit of a sequence of walls that block the front is again a blocking wall (Corollary~\ref{cor:blocking}). In other words, the family of blocking walls is closed.

Note that the classification \eqref{pro-block} between propagation and blocking is defined for the limit profile of the special solution $\bar{u}$. One may then wonder what happens for other solutions. It turns out that a large class of solutions whose initial support is contained in the region $\{x_1\leq 0\}$ converge to the same limit $\bar{v}$ as $t\to+\infty$ (Theorem~\ref{th:general}). Therefore, the notion of propagation and blocking defined in \eqref{pro-block} has much broader relevance.

The organization of this paper is as follows. In Sections \ref{s:main0}, \ref{s:main-blocking}, \ref{s:main-propagation}, we present our main results. In Section \ref{s:main0}, we assume simply that $K$ satisfies \eqref{K} and prove Theorem~\ref{th:dichotomy}, which establishes the dichotomy~\eqref{pro-block}. We next show that many solutions with initial support in the region $\{x_1\leq 0\}$ converge to the same limit $\bar{v}$ defined in \eqref{vbar} as $t\to+\infty$ (Theorem \ref{th:general}).

In Section \ref{s:main-blocking}, we give sufficient conditions for blocking. More precisely, if all the holes are sufficiently narrow in a certain sense, then blocking occurs. For this result, we consider two cases: the case where the holes of $K$ are localized within a bounded area (Theorem~\ref{th:blocking-localized}) and the case where $K$ is periodic in $y\in\R^{N-1}_y$ (Theorem \ref{th:blocking-periodic}). 

For propagation, we give three different types of sufficient conditions, namely
\begin{itemize}
\setlength{\parskip}{0pt}
\setlength{\itemindent}{30pt}
\item[(a)] walls with large holes;
\item[(b)] small-capacity walls (or skeleton walls);
\item[(c)] parallel-blade walls. 
\end{itemize}

The case (a) is intuitively clear. If the wall has a large enough hole that allows a ball of radius $R_0>0$ to pass through from one side of the wall to the other side, where $R_0$ is a specific constant to be specified later, then propagation occurs (Theorem~\ref{th:large-holes}). 

The case (b) deals with walls that have small capacity. If the capacity of the wall is very small, then propagation occurs even if there is no large open space in the wall (Theorem~\ref{th:small-capacity}). This is typically the case when the wall is made of dense debris-like objects.

The case (c) deals with walls that consist of very thin pannels that are parallel to the $x_1$-axis. For example, in the case $N=2$, a parallel-blade wall consists of thin needle-like obstacles that are all parallel to the $x_1$-axis. Here again propagation can occur even if the space between the needles is narrow, so long as the needles are thin enough (Theorem~\ref{th:parallel}). 

Figure \ref{fig:propagation-walls} shows typical images of the above three types of walls.

\begin{figure}[h]
\begin{center}
\includegraphics[width=0.75\textwidth]
{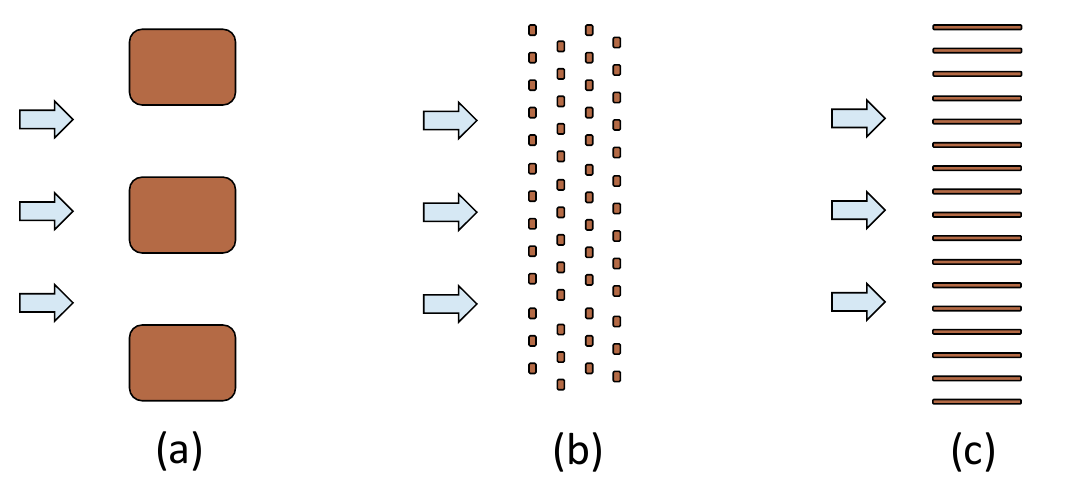}
\end{center}
\vspace{-10pt}
\caption{(a) wall with large holes; (b) small-capacity wall; (c) parallel-blade wall}
\label{fig:propagation-walls}
\end{figure}

We also present a result on {\it complete invasion}. More precisely we show that if the wall~$K$ satisfies certain geometrical conditions then we have $\bar v=1$ on entire $\Omega$ (Theorem~\ref{th:complete}).

In Section \ref{s:dichotomy-proof}, we prove the dichotomy theorem (Theorem \ref{th:dichotomy}), using a Liouville type theorem by Liu {\it et al} \cite{LWWW} (Theorem~\ref{th:Liouville}). 

In Section \ref{s:blocking}, we prove Theorems~\ref{th:blocking-localized} and \ref{th:blocking-periodic} on blocking.  The main idea is to construct an upper barrier, which is a stationary supersolution that tends to $0$ as $x_1\to +\infty$.  This barrier function is constructed by a variational method. 

In Section \ref{s:propagation} we prove the main results for propagation for the above three types of walls (a), (b), (c) (Theorems \ref{th:large-holes}, \ref{th:small-capacity}, \ref{th:parallel}).  The proof of Theorem~\ref{th:large-holes} for case (a) is rather straightforward, and it is based on the comparison principle and a sliding argument. In the proof of Theorem~\ref{th:small-capacity} for case (b), we combine the classical theory of removable singularities on a set of zero capacity and the above mentioned fact (Corollary~\ref{cor:blocking}) that the limit of a sequence of blocking walls is again blocking. 

The proof of Theorem~\ref{th:parallel} for case (c) is based on a rather non-standard comparison argument. More precisely, we construct a family of ``quasi-subsolutions'' $w^\lambda$ that slide along the $x_1$-axis and show that $w^\lambda$ remains ``nearly below'' the limit profle $\bar{v}$ for all $\lambda\in\R$. Here, $w^\lambda$ being nearly below $\bar{v}$ means that the measure of the set $\{x \mid w^\lambda(x) - \bar{v}(x)>0\}$ remains small. Once this is shown, we have $\bar{v}(x)\to 1$ as $x_1\to+\infty$. In order to make this argument work, we need a refined version of Poincar\'e inequality (Lemma~\ref{lem:poincare}), which will be proved in Appendix~\ref{s:poincare}. 

Section \ref{s:complete} is devoted to the proof of Theorems~\ref{th:complete} and \ref{th:incomplete} on complete and incomplete invasions. 


\section{Main results 1: dichotomy theorem}\label{s:main0}

In this section we consider general type of walls of finite thickness, that is, we simply assume the condition \eqref{K}, and state Theorem \ref{th:dichotomy} on the classification between propagation and blocking,  and Theorem \ref{th:general} on the behavior of solutions with compactly supported initial data.


\subsection{Dichotomy theorem}\label{ss:main-dichotomy}

We start with the results on the classification of the long-time behavior of the solution $\bar{u}$.

\begin{Thm}[Dichotomy]\label{th:dichotomy}
Assume \eqref{K}, and let $\bar{v}$ denote the limit profile defined in \eqref{vbar}. Then one of the following alternatives holds:
\[
\lim_{x_1\to +\infty} \bar{v}(x_1,y)=1\ \ \hbox{\rm (propagation)},\quad 
\lim_{x_1\to +\infty} \bar{v}(x_1,y)=0\ \ \hbox{\rm (blocking)} 
\]
Furthermore, the above convergence is uniform with respect to $y\in\R^{N-1}$ and $K$ so long as $K$ satisfies \eqref{K}. More precisely, for any $\ep>0$, there exists $M^\ep\geq M$ that does not depend on $K$ such that
\begin{equation}\label{vbar-ep}
\bar{v}(x_1,y)\in (0,\ep]\cup [1-\ep,1]\quad \hbox{for all}\ \ x_1\geq M^\ep,\ y\in\R^{N-1}.
\end{equation}
\end{Thm}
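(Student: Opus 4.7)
The plan is to combine local elliptic compactness with the Liouville-type theorem of \cite{LWWW} (Theorem~\ref{th:Liouville}) to classify blow-down limits of $\bar v$ at $x_1=+\infty$, and then deduce the dichotomy and the uniform bound~\eqref{vbar-ep} by a contradiction/connectedness argument. Preliminary observations: $\bar v$ is a bounded entire solution of~\eqref{S} with $\bar v\to 1$ as $x_1\to -\infty$; because $\bar u\nearrow\bar v$ with $\bar u_t>0$, the limit $\bar v$ is stable in the sense required by Theorem~\ref{th:Liouville}; and on the half-space $\{x_1>M\}$ the obstacle is absent, so $\bar v$ solves $\Delta v+f(v)=0$ classically there.

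The core step is a compactness argument proving~\eqref{vbar-ep} by contradiction. Suppose that no uniform $M^\ep$ existed. Then one can extract a sequence of walls $K_n$ satisfying~\eqref{K} together with points $x^n=(x_1^n,y^n)\in\Omega_n:=\R^N\setminus K_n$ such that $x_1^n\to+\infty$ and the associated limit profiles $\bar v_n$ satisfy $\bar v_n(x^n)\in[\ep,1-\ep]$. Translating, the functions $w_n(x):=\bar v_n(x+x^n)$ are uniformly bounded in $[0,1]$ and solve $\Delta w+f(w)=0$ on translated domains $\Omega_n-x^n$ that exhaust $\R^N$ (because $K_n\subset\{0\le x_1\le M\}$ and $x_1^n\to\infty$). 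Standard elliptic Schauder estimates yield a subsequential $C^2_{\mathrm{loc}}$-limit $w_\infty:\R^N\to[0,1]$ solving $\Delta w_\infty+f(w_\infty)=0$, and the stability of $\bar v_n$ passes to the limit. Theorem~\ref{th:Liouville} then forces $w_\infty$ to be a constant zero of $f$; the unstable zero $\alpha$ (where $f'(\alpha)>0$) is ruled out by stability, leaving $w_\infty\in\{0,1\}$. This contradicts $w_\infty(0)\in[\ep,1-\ep]$, proving~\eqref{vbar-ep}.

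To obtain the dichotomy~\eqref{pro-block} itself, namely that the same alternative holds for all $y$, I would use connectedness. For fixed $\ep$ and a fixed wall $K$, the bound~\eqref{vbar-ep} says that the transition set $W^\ep:=\{x\in\Omega:\ep\le\bar v(x)\le 1-\ep\}$ is bounded, hence contained in some ball $B_R$. The region $\{x_1>\max(M^\ep,R)\}\setminus\overline{B_R}$ is connected and $\bar v$ is continuous there while avoiding the closed interval $[\ep,1-\ep]$, so $\bar v$ must lie entirely in $(0,\ep)$ or entirely in $(1-\ep,1]$ on that region. Letting $\ep\to 0$ then yields $\bar v(x_1,y)\to 1$ or $\bar v(x_1,y)\to 0$ as $x_1\to+\infty$, uniformly in $y$.

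The main obstacle is verifying that the stability property of $\bar v$, which originates from $\bar u$ being a time-increasing solution with $\bar u_t>0$, is both preserved by the blow-down limit and matches the precise hypothesis of the Liouville theorem in \cite{LWWW}. Ruling out the unstable zero $\alpha$ is a closely related subtlety, handled either by a sharp form of Theorem~\ref{th:Liouville} that incorporates the instability of $\alpha$, or by a direct compactly supported perturbation argument exploiting $f'(\alpha)>0$.
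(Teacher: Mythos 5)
Your core argument — argue by contradiction, translate the limit profiles along a sequence of points with $x_1^n\to+\infty$, invoke local elliptic estimates to extract a $C^2_{\mathrm{loc}}$ limit $w_\infty$ on all of $\R^N$, note that stability (in the sense of Definition~\ref{def:stability}) is inherited from the time-monotone approximation $\bar u\nearrow\bar v$ and survives translation and passage to the limit, and then apply the Liouville theorem of \cite{LWWW} to conclude $w_\infty\in\{0,1\}$, contradicting $w_\infty(0)\in[\ep,1-\ep]$ — is precisely the argument the paper gives, including the simultaneous use of a sequence of walls $K_n$ to get the $K$-uniformity of $M^\ep$.

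The one place you go astray is in the connectedness step used to pass from the uniform bound~\eqref{vbar-ep} to the dichotomy. You assert that \eqref{vbar-ep} makes the transition set $W^\ep=\{x\in\Omega:\ep\le\bar v(x)\le 1-\ep\}$ bounded and hence contained in a ball $B_R$. That is false: \eqref{vbar-ep} only constrains $\bar v$ in the region $x_1\ge M^\ep$, so all it gives is $W^\ep\subset\{x_1<M^\ep\}$, an unbounded slab. Indeed, in the blocking case $\bar v$ transitions from near $1$ to near $0$ as $x_1$ crosses the wall for \emph{every} $y$, so $W^\ep$ is genuinely unbounded in the $y$-directions. The fix is to drop the ball altogether: the half-space $\{x_1>M^\ep\}$ is already connected, lies entirely in $\Omega$ (since $M^\ep\ge M$), and on it $\bar v$ is continuous and omits the interval $(\ep,1-\ep)$; therefore $\bar v$ is entirely in $(0,\ep]$ or entirely in $[1-\ep,1]$ on $\{x_1>M^\ep\}$. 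Sending $\ep\to0$ and using consistency of the alternative across decreasing $\ep$ then yields the uniform limit $0$ or $1$. (The paper leaves this reduction implicit; your instinct to make it explicit is good — just remove the incorrect boundedness claim.)
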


An immediate consequence of the above theorem is the following:

\begin{Cor}\label{cor:blocking}
Let $K_1, K_2, K_3,\ldots$ be a sequence of smooth walls satisfying 
\[
K_j\subset\{x\in\R^N\mid 0\leq x_1\leq M\}\ \  (j=1,2,3,\ldots)
\]
that converge to a wall $K_\infty$ in the Hausdorff distance. If blocking occurs for every $K_j\;(j=1,2,3,\ldots)$ then the same
holds for $K_\infty$. 
To be more precise, if $\bar v_j\;(j=1,2,3,\ldots)$ denote the limit profile for $K_j$, and if $\bar v_\infty$ denotes the limit of any convergent subsequence of $\{\bar v_j\}$, then 
\[
\lim_{x_1\to +\infty} \bar{v}_\infty(x_1,y)=0\quad \hbox{uniformly in}\ y\in\R^{N-1}.
\]
\end{Cor}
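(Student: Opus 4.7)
The plan is to exploit the uniformity built into Theorem \ref{th:dichotomy}: the threshold $M^\varepsilon$ depends only on $\varepsilon$ and $M$, \emph{not} on the particular wall. This robustness is precisely what makes blocking a closed condition under Hausdorff limits, and it reduces the corollary to a routine passage to the limit.

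Fix $\varepsilon \in (0,1/2)$ arbitrary. Since every $K_j$ satisfies \eqref{K} with the common constant $M$, Theorem \ref{th:dichotomy} provides $M^\varepsilon \ge M$, \emph{independent of $j$}, with
\[
\bar v_j(x_1, y) \in (0, \varepsilon] \cup [1 - \varepsilon, 1]
\quad \hbox{for all } x_1 \ge M^\varepsilon,\ y \in \R^{N-1}.
\]
Because $K_j \subset \{0 \le x_1 \le M\} \subset \{x_1 < M^\varepsilon\}$, the half-space $H := \{x_1 \ge M^\varepsilon\}$ lies entirely in $\overline{\Omega}_j$ and is connected. Since blocking holds for $K_j$, one has $\bar v_j(x_1, y) \to 0$ as $x_1 \to +\infty$, so $\bar v_j$ attains some value in $(0,\varepsilon]$ on $H$. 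Continuity of $\bar v_j$ on the connected set $H$, together with the disjointness of the two intervals, then forces $\bar v_j \le \varepsilon$ throughout $H$. Crucially, this bound is uniform in $j$.

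Now let $\bar v_\infty$ be the limit of any convergent subsequence $\{\bar v_{j_k}\}$; standard elliptic regularity for \eqref{S}, together with the uniform smoothness of $\partial K_j$ and the Hausdorff convergence $K_j \to K_\infty$, yields locally uniform convergence on compact subsets of $\overline{\Omega}_\infty$, which is more than enough to pass $\bar v_{j_k} \le \varepsilon$ on $H$ to the limit: $\bar v_\infty(x_1, y) \le \varepsilon$ for all $x_1 \ge M^\varepsilon$. Since $\varepsilon$ was arbitrary, $\bar v_\infty(x_1, y) \to 0$ uniformly in $y \in \R^{N-1}$ as $x_1 \to +\infty$, which is the claim. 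The only genuinely delicate ingredient is the uniformity of $M^\varepsilon$ with respect to $K$: without it, a sequence of blocking walls could a priori have their ``decay zones'' drift to $+\infty$, and the limit could conceivably land on the propagation side. Theorem \ref{th:dichotomy} precisely excludes this pathology, after which the corollary falls out in a few lines.
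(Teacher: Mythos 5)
Your proof is correct and is exactly the argument the paper leaves implicit by calling the corollary an ``immediate consequence'' of Theorem~\ref{th:dichotomy}: the $K$-independent threshold $M^\varepsilon$, the blocking hypothesis for each $K_j$, and the connectedness of $\{x_1\geq M^\varepsilon\}$ together force $\bar v_j\leq\varepsilon$ there \emph{uniformly in $j$}, and this uniform bound passes to any limit $\bar v_\infty$. No further commentary is needed; the one minor cosmetic point is that if $M^\varepsilon=M$ the inclusion $\{0\le x_1\le M\}\subset\{x_1<M^\varepsilon\}$ fails on the slice $x_1=M$, but replacing $M^\varepsilon$ by $M^\varepsilon+1$ (or noting that $\overline{\Omega}_j\cap\{x_1\ge M^\varepsilon\}$ is still connected) removes the issue without changing the argument.
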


The above corollary implies that blocking walls form a closed family. This result will be exceedingly useful in the proof of propagation results for small capacity walls (Theorem~\ref{th:small-capacity}) and for parallel-blade walls (Theorem~\ref{th:parallel}) as we shall see in subsections \ref{ss:proof-small-capacity} and \ref{ss:proof-parallel}.

Somewhat similar dichotomy results are also known in other contexts. In~\cite{HZ}, the propagation of a solution $u(t,x)$ emanating from the planar front $\phi(x_1-ct)$ in a domain made up of, roughly speaking, a straight half-cylinder $\{x_1\le0,\,|y|<R\}$ and a cone $\{x_1\ge0,\,|y|<R+\beta|x_1|\}$ ($\beta>0$) was investigated: in that geometrical configuration, a dichotomy also holds for the limit profile $\bar{v}(x)$ of $u(t,x)$ as $t\to+\infty$, namely, either $\bar{v}=1$ in the domain (complete invasion), or $\bar{v}(x_1,y)\to0$ as $x_1\to+\infty$.


\subsection{Behavior of more general solutions}\label{ss:general}

So far, the notion of propagation and blocking has been defined by using the special solution~$\bar{u}$ satisfying \eqref{ubar-phi}. Here we consider the following initial-boundary value problem associated with \eqref{E} and show that many solutions of this problem share the same limit profile as $\bar{u}$, therefore the classification between propagation and blocking has much broader implications.
\begin{equation}\label{E2}
\left\{
\begin{array}{ll}
u_t=\Delta u +f(u), \ \ & t>0,\,x\in \Omega:=\R^N\setminus K,\\[4pt]
u(0,x)=u_0(x), \ \ &x\in \Omega,\\[3pt]
\dfrac{\partial u}{\partial \nu} = 0, \ \ & t>0,\,x\in \partial\Omega.
\end{array}\right.
\end{equation}

Before stating the theorem, we introduce some notation. Let $H(z),\,z\leq 0,$ be the function that is defined uniquely by the following conditions:
\[
H''+f(H)=0\ \ (-\infty<z<0), \quad H(0)=0,\quad \lim_{z\to-\infty}H(z)=1.
\]
Such a function exists since $f$ is an unbalanced bistable nonlinearity satisfying \eqref{f}. It is easily seen that $H'<0$ and that $0<H<1$ in $(-\infty,0)$. The function $H$ is extended by~$0$ in $(0,+\infty)$. Next let $\Psi^P(x)$ denote the compactly supported subsolution of \eqref{E} defined in~\eqref{Psi-P}. Then the following holds:

\begin{Thm}\label{th:general}
Let $u$ be a solution of \eqref{E2} whose initial data $u_0$ satisfies
\[
\Psi^P(x)\leq u_0(x)\leq H(x_1)\quad \hbox{for} \ \ x=(x_1,x_2,\ldots,x_N)\in \overline{\Omega}
\]
for some $P\in\Omega\cap\{x_1<0\}$. Then
\begin{equation}\label{limit-u}
\lim_{t\to+\infty} u(t,x)=\bar{v}(x),
\end{equation}
where $\bar{v}$ is the limit profile of the special solution $\bar{u}$ defined in \eqref{vbar}.
\end{Thm}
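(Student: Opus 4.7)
My plan is to sandwich $u$ between two time-translates of the special solution $\bar u$: I will establish
\[
\bar u(t - T_-, x) \leq u(t,x) \leq \bar u(t + T_+, x) \quad \text{for all } t \geq t_0,\ x \in \overline\Omega,
\]
for suitable constants $T_\pm > 0$ and $t_0 \geq 0$. Since $\bar u(t,x) \to \bar v(x)$ monotonically as $t \to +\infty$, both $\bar u(t-T_-,\cdot)$ and $\bar u(t+T_+,\cdot)$ converge to $\bar v$, so~\eqref{limit-u} follows. By the parabolic comparison principle, the left inequality holds once $u(t_0,x) \geq \bar u(t_0 - T_-,x)$ pointwise, and the right one once $u_0(x) \leq \bar u(T_+,x)$. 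Using the hypothesis $\Psi^P \leq u_0 \leq H(x_1)$, these reduce respectively to finding $t_0, T_-$ with $\tilde v(t_0,\cdot) \geq \bar u(t_0 - T_-,\cdot)$ (where $\tilde v$ solves~\eqref{E2} from $\Psi^P$ and lies below $u$ by comparison), and finding $T_+$ with $H(x_1) \leq \bar u(T_+,x)$ on $\overline\Omega$.

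For the lower bound: since $\Psi^P$ is a compactly supported super-threshold sub-solution of~\eqref{S} centered at $P \in \Omega \cap \{x_1 < 0\}$, classical Aronson--Weinberger-type spreading (adapted to the Neumann problem on $\Omega$) shows that $\tilde v(t,\cdot) \to 1$ locally uniformly and the invasion proceeds at asymptotic speed $c$ in every direction. Since $\bar u(t,x) \to \phi(x_1 - ct)$ as $t \to -\infty$, for $T_-$ large the function $\bar u(t_0 - T_-,\cdot)$ is approximately a planar front with its transition layer near $x_1 = c(t_0 - T_-)$, far to the left. Choosing $t_0$ large enough that $\tilde v(t_0,\cdot) \geq 1 - \epsilon$ on a sufficiently large ball around $P$, and then $T_-$ so that $c(t_0 - T_-)$ sits well to the left of that ball, one secures $\tilde v(t_0,x) \geq \bar u(t_0 - T_-,x)$ pointwise on $\overline\Omega$; comparison from $t_0$ onward extends this to $t \geq t_0$.

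For the upper bound, the required inequality $H(x_1) \leq \bar u(T_+,x)$ is immediate on $\{x_1 \geq 0\}$ (since $H \equiv 0$ and $\bar u(T_+,\cdot) > 0$ by the strong maximum principle and Hopf lemma), but on $\{x_1 < 0\}$ both $H(x_1)$ and $\bar u(T_+,x) \approx \bar v(x)$ tend to $1$ as $x_1 \to -\infty$ at comparable exponential rates controlled by $\sqrt{|f'(1)|}$. This tail comparison is the main technical obstacle. I plan to handle it by first proving $\bar v \geq H$ on $\overline\Omega$ directly: noting that $H$ is a distributional sub-solution of~\eqref{S} due to the convex corner at $x_1 = 0$ (where $H'$ jumps upward from $-\sqrt{2\int_0^1 f(s)\,ds}$ to $0$), and that $\partial_\nu H = 0$ on $\partial\Omega \subset \{x_1 \geq 0\}$ (where $H \equiv 0$), the function $\Phi := \bar v - H$ satisfies (distributionally) $\Delta \Phi + c(x)\Phi \leq 0$ on $\Omega$ with $\partial_\nu \Phi = 0$, $\Phi \geq 0$ on $\{x_1 \geq 0\}$, and $\Phi(x) \to 0$ as $x_1 \to -\infty$ uniformly in $y$ by~\eqref{vbar-1}. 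A maximum-principle argument then yields $\Phi \geq 0$. Since $\bar u(T_+,\cdot) \nearrow \bar v$ as $T_+ \to +\infty$, with uniform convergence on bounded strips $\{|x_1| \leq R\}$ and on $\{x_1 \geq M^\epsilon\}$ by~\eqref{vbar-ep}, picking $T_+$ sufficiently large produces $\bar u(T_+,x) \geq H(x_1)$ on $\overline\Omega$ (absorbing a small error), completing the upper bound.
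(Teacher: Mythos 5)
Your sandwich strategy --- trapping $u$ between two time-translates $\bar u(t-T_-,\cdot)$ and $\bar u(t+T_+,\cdot)$ --- runs into two fatal tail problems, both of which the paper's own proof carefully circumvents by using $\bar v$ itself (a stationary supersolution) as the upper barrier and the solution issuing from the compactly supported subsolution $\Psi^P$ as the lower barrier, combined with the minimality of $\bar v$ (Proposition~\ref{prop:minimal}).

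\emph{Upper bound.} You need $H(x_1)\le\bar u(T_+,x)$ on $\overline\Omega$ for some finite $T_+$, and you claim both profiles approach $1$ as $x_1\to-\infty$ ``at comparable exponential rates controlled by $\sqrt{|f'(1)|}$.'' This is false. The stationary bump $H$ satisfies $1-H(z)\sim C\,e^{\mu_0 z}$ with $\mu_0=\sqrt{-f'(1)}$, whereas the traveling profile satisfies $1-\phi(z)\sim C'\,e^{\mu z}$ with $\mu=\tfrac{-c+\sqrt{c^2-4f'(1)}}{2}$; one checks $\mu^2+c\mu=\mu_0^2$, hence $\mu<\mu_0$ strictly since $c>0$. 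For a \emph{fixed} $T_+$, the profile $\bar u(T_+,\cdot)$ still carries the slower traveling-wave decay $e^{\mu x_1}$ in its left tail (already visible in the obstacle-free case $\bar u(t,x)=\phi(x_1-ct)$), so $1-\bar u(T_+,x)\gg 1-H(x_1)$ as $x_1\to-\infty$, i.e.\ $\bar u(T_+,x)<H(x_1)$ in the deep tail for \emph{every} finite $T_+$. No ``absorbing a small error'' can fix this; the inequality you want simply does not hold. The paper sidesteps this by establishing $H<\bar v$ (the stationary limit has the fast rate $\mu_0$, not $\mu$) and using $\bar v$ directly as a time-independent upper barrier.

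\emph{Lower bound.} You need $u(t_0,x)\ge\bar u(t_0-T_-,x)$ on all of $\overline\Omega$. But $\bar u(t_0-T_-,x)\to1$ as $x_1\to-\infty$, while $u(t_0,\cdot)$ need not: the hypothesis only gives $u_0\ge\Psi^P$, which has compact support, so $u_0$ may vanish identically far to the left, and then $u(t_0,x)$ stays small there. Spreading at speed $c$ reaches any \emph{fixed} compact set eventually, but at each fixed time the level set $\{u(t_0,\cdot)\ge 1-\epsilon\}$ is bounded; it cannot dominate a planar-front-like profile that is already $\approx 1$ at $x_1=-\infty$. Here again the paper avoids the issue entirely: it compares $u$ from below with the solution $U^P$ emanating from $\Psi^P$ (no tail issue, both sides are initially $0$ outside a ball), shows that $U^P\nearrow V^P$ with $V^P\le\bar v$ and $V^P\to1$ as $x_1\to-\infty$, and then invokes the minimality of $\bar v$ to conclude $V^P=\bar v$. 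You would need that minimality statement (or an equivalent) in any event; your sketch does not supply a substitute for it.

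Your observation $\bar v\ge H$ on $\{x_1\le0\}$ is correct and is indeed a step the paper also carries out (via a sliding argument with Lemma~\ref{lem:comparison-ancient} rather than your distributional maximum-principle sketch), but by itself it does not rescue the time-translate sandwich.
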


The proof of Theorems \ref{th:dichotomy} and \ref{th:general} will be given in Section \ref{s:dichotomy-proof}. More specifically, Theorem~\ref{th:dichotomy} will be proved by using a Liouville type result due to Liu {\it et al} \cite{LWWW}. Theorem~\ref{th:general} will be proved by first observing that $\bar{v}$ is the minimal among all the stationary solutions that satisfy \eqref{vbar-1} (Proposition~\ref{prop:minimal}). Once this minimality is established, Theorem \ref{th:general} follows immediately.


\section{Main results 2: sufficient conditions for blocking}\label{s:main-blocking}

In this section we discuss sufficient conditions for blocking (Theorems~\ref{th:blocking-localized} and \ref{th:blocking-periodic}). In addition to the finite-thickness condition \eqref{K}, we assume that either of the following holds:
\begin{itemize}
\item[{\bf (K1)}] (wall with localized holes): there exist $a,b$ with $0\leq a<b\leq M$ such that $\{x\in\R^N\mid a\leq x_1\leq b\}\setminus K$ is bounded;
\item[{\bf (K2)}] (periodic wall): there exist linearly independent vectors 
${\boldsymbol p}_2, \ldots, {\boldsymbol p}_{N}\in \R^{N-1}_y$ such that 
\begin{equation}\label{K-periodic}
K+{\boldsymbol p}_i=K\ \  (i=2,\ldots,N)\quad \hbox{(periodicity in $y$)}.
\end{equation}
\end{itemize}

Note that, in \eqref{K-periodic}, the vectors ${\boldsymbol p}_2, \ldots, {\boldsymbol p}_{N}\in \R^{N-1}_y$ are identified with those in $\R^N$ whose projection onto the $x_1$-axis is $0$.

For both (K1) and (K2), the blocking is proved by constructing a suitable upper barrier around the wall. The construction of the barrier is based on an variational argument. The proofs for the case (K1) and the case (K2) are essentially the same, except that the variational argument for (K2) is carried out on the unit periodicity cell defined in \eqref{unit-cell}.


\subsection{Blocking for walls with localized holes}\label{ss:main-blocking-localized}

In this subsection we consider the case (K1). We repeat our assumption:
\begin{equation}\label{K-single}
\{x\in\R^N\mid a\leq x_1\leq b\} \setminus K \ \ \hbox{is bounded for some $0\leq a<b\leq M$.}
\end{equation}
This includes the case where the wall $K$ has a single hole. We introduce some notation.
\[
\Omega_{b}:= \{ (x_{1}, y) \in \Omega\mid x_{1}> b \},\quad \Omega_{a, b}:= \{ (x_{1}, y) \in \Omega\mid a< x_{1} < b \}.
\]

\begin{Thm}[Blocking for walls with localized holes]\label{th:blocking-localized}
Assume that $\Omega_{b}$ is a uniformly Lipschitz domain. Then there exists $\ep>0$, depending on $f$, $b-a$ and $\Omega_b$, such that if~\eqref{K-single} holds and $|\Omega_{a,b}| \leq \ep$, then blocking necessarily occurs, where  $|A|$ denotes the Lebesgue measure of a set $A$.
\end{Thm}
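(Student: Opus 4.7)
The strategy, as indicated in Section~\ref{s:main0}, is to reduce blocking to the construction of a stationary upper barrier and then appeal to the dichotomy of Theorem~\ref{th:dichotomy}. Concretely, it suffices to exhibit a continuous stationary supersolution $V$ of \eqref{S} defined on $\overline{\Omega}\cap\{x_1\ge a\}$ such that $V\ge 1$ on $\{x_1=a\}\cap\overline{\Omega}$ and $\sup_y V(x_1,y)\to 0$ as $x_1\to+\infty$. Since $\bar v\le 1$ globally, the comparison principle applied on $\overline{\Omega}\cap\{x_1\ge a\}$ then yields $\bar v\le V$, so $\bar v(x_1,y)\to 0$ as $x_1\to+\infty$, which together with \eqref{pro-block} is exactly ``blocking''.

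I would build $V$ in two pieces. On $\overline{\Omega_{a,b}}$, set $V\equiv 1$; this is a trivial supersolution since $f(1)=0$ and the Neumann condition is automatic. On $\overline{\Omega_b}$, take $V=W$ where $W$ is a minimizer of the Ginzburg--Landau type energy
\[
J(w)=\int_{\Omega_b}\Big(\tfrac12|\nabla w|^2-F(w)\Big)\,dx,\qquad F(s):=\int_0^s f(\tau)\,d\tau,
\]
over the class of $w\in H^1(\Omega_b)$ with $0\le w\le 1$ a.e.\ and trace $w=1$ on the bounded interface $\Gamma:=\overline{\Omega_{a,b}}\cap\{x_1=b\}$. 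The direct method produces a minimizer $W$; the Euler--Lagrange analysis of this two-obstacle variational problem, combined with $f(0)=f(1)=0$, shows that $W$ is a stationary supersolution of \eqref{S} in $\Omega_b$ with the natural Neumann condition on $\partial\Omega\cap\Omega_b$, and the two pieces match continuously at $\Gamma$ with a nonincreasing jump in the direction pointing into $\Omega_b$, which is what is needed for $V$ to be a global distributional supersolution.

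The heart of the argument is the quantitative claim that, if $|\Omega_{a,b}|\le\ep$ for a threshold $\ep=\ep(f,b-a,\Omega_b)$, then $W(x)\to 0$ uniformly in $y$ as $x_1\to+\infty$. I would proceed by a two-sided energy comparison: on one side, plugging into $J$ a cutoff test function supported in a thin collar of $\Gamma$ gives $J(W)\le C|\Omega_{a,b}|$ with $C=C(f,b-a,\Omega_b)$ coming from the uniform trace inequality on the Lipschitz domain $\Omega_b$; on the other side, since $-F>0$ on $(0,\theta)$ for $\theta$ the unique zero of $F$ in $(\alpha,1)$, a lower bound of $J(W)$ in terms of the superlevel sets $\{W>\beta\}\cap\Omega_b$ yields a measure estimate $|\{W>\beta\}|\le C'\ep$ for any fixed $\beta\in(0,\theta)$. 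This integral smallness, coupled with the exponential decay of small positive solutions of $-\Delta W=f(W)$ guaranteed by $f'(0)<0$ and with uniform elliptic regularity on sliding balls inside $\Omega_b$, finally yields the pointwise decay.

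The main obstacle I anticipate is precisely this last bootstrapping step, converting the integral smallness $|\{W>\beta\}|\le C'\ep$ into uniform pointwise decay in $x_1$. This requires both the uniform Lipschitz character of $\Omega_b$ (to obtain elliptic constants independent of the center of the sliding ball) and a careful use of the bistable structure near $W=0$. The threshold $\ep=\ep(f,b-a,\Omega_b)$ in the statement is precisely the one making this energy competition and the subsequent pointwise-decay argument close up.
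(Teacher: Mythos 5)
The central gap is that your minimization of
\[
J(w)=\int_{\Omega_b}\Big(\tfrac12|\nabla w|^2 - F(w)\Big)\,dx
\]
over $\{w\in H^1(\Omega_b):\ 0\le w\le 1,\ w|_\Gamma=1\}$ is ill-posed. Since $\Omega_b$ is unbounded and $F(1)=\int_0^1 f>0$, so $-F(1)<0$, a competitor equal to $1$ on a large ball $B_R\subset\Omega_b$, decaying to $0$ on $B_{R+1}\setminus B_R$, and joined to the trace condition by a fixed patch satisfies $J\approx -F(1)\,|B_R|+O(R^{N-1})\to-\infty$ as $R\to\infty$. Hence $\inf J=-\infty$; the direct method produces no minimizer, and your estimate ``$J(W)\le C|\Omega_{a,b}|$'' has no object to apply to. This is precisely what the paper's proof is built to avoid: $\Omega_b$ is decomposed into cells $D_j$ of volume bounded below satisfying a Poincar\'e--Wirtinger inequality with constant $2\mu$, the integrand acquires a $+F(1)$ on the transition strip $\{-1<x_1<b\}$, and the minimization is carried out subject to the side constraint that the average of $w$ over each $D_j$ is $\le\delta$. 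With $\mu,\sigma,\delta$ chosen so that $-F(s)+\mu(s-\bar s)^2\ge 0$ for $\bar s\le\delta$ and $\ge\sigma$ when $\bar s=\delta$, the constrained functional $J_{-1}$ is nonnegative cell by cell, so a minimizer $w_0$ exists. The crux of the proof, entirely missing from your sketch, is then to show that when $|\Omega_{a,b}|\le\ep$ the constraint is \emph{strictly inactive} at $w_0$: if some cell average equaled $\delta$, one would get $J_{-1}(w_0)\ge\sigma D_{\min}$, contradicting the upper bound $J_{-1}(w_0)\le J_{-1}(\zeta)\le\ep\big(\tfrac{1}{2(b-a)^2}-F(\alpha)+F(1)\big)$ provided by the explicit linear interpolant $\zeta$. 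Only with the constraint inactive does $w_0$ satisfy the Euler--Lagrange equation and become a genuine barrier.

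Two further remarks. Your energy upper bound uses a test function whose $1\to 0$ transition lies in a collar of $\Gamma$ inside $\Omega_b$; but neither the $(N-1)$-dimensional measure of $\Gamma$ nor the volume of such a collar is controlled by $|\Omega_{a,b}|$, so this does not deliver an $O(\ep)$ bound. The transition must sit inside $\Omega_{a,b}$ itself, where the volume hypothesis actually bites; that is exactly the role of the paper's $\zeta$ in \eqref{zeta1}. Also, the pointwise decay of the barrier as $x_1\to+\infty$, which you propose to obtain via exponential decay and sliding elliptic estimates, is not needed for the conclusion: once the constraint is strictly inactive, Lemma~\ref{lem:comparison-ancient} (ii) gives $\bar v\le w_0$ on $\Omega_{-1}$, which already rules out $\bar v(x_1,y)\to 1$, and the dichotomy of Theorem~\ref{th:dichotomy} then forces blocking. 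The decay of $w_0$ does hold but is recorded only as an aside (Remark~\ref{rk:V-0}), not as part of the logic.
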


It should be noted that the dependence of $\ep$ on the set $\Omega_b$ is quite subtle. In fact, if one fixes the passage $\Omega_{a, b}$, even a very narrow one, then it is shown in \cite{BBC} that an opening from this passage into the area $\{x_1\geq M\}$ that is {\em gradual} enough will allow the wave to propagate through the wall. Therefore, the narrowness of the passage $\Omega_{a,b}$ alone cannot guarantee blocking; whether blocking occurs or not depends on the combination of $\Omega_{a,b}$ and $\Omega_b$. 

We prove the above theorem in subsection \ref{ss:proof-general} by constructing a {\em barrier function} $w_{0}$ that is a stationary super solution of the elliptic equation in the region $\Omega_{-1}$ with $w_{0}(-1, y) = 1$ for all $y\in \R^{N-1}$ and $w_{0}(x_{1}, y) \to 0$ as $x_{1}\to \infty$. Such a barrier function blocks fronts as we have $\bar{u}(x,t)<w_{0}(x)$ for all $t\in\R,\,x\in\Omega_{-1}$.


\subsection{Blocking for periodic walls}
\label{ss:main-blocking-periodic}

Here we assume that $K$ satisfies \eqref{K-periodic}. In what follows, we shall denote the set of vectors ${\boldsymbol p}_2, \ldots, {\boldsymbol p}_{N}$ by ${\mathcal P}$. We say that a set $S\subset\R^N$ (or $S\in\R_y^{N-1}$) is {\bf ${\mathcal P}$-periodic} if 
\[
S=S+{\boldsymbol p}_2=\cdots=S+{\boldsymbol p}_{N}.
\]
Here the vectors ${\boldsymbol p}_2, \ldots, {\boldsymbol p}_{N}\in \R^{N-1}_y$ are identified with those in $\R^N$ whose projection onto the $x_1$-axis is $0$. Thus \eqref{K-periodic} means that $K$ is ${\mathcal P}$-periodic. We say that a function $w(x)=w(x_1,y)$ defined on a set $\Omega'\subset\R^N$ is {\bf ${\mathcal P}$-periodic} if its domain of definition $\Omega'$ is ${\mathcal P}$-periodic and if 
\[
w(x_1,y)=w(x_1,y+{\boldsymbol p}_2)=\cdots=w(x_1,y+{\boldsymbol p}_{N})\quad
\hbox{for all}\ (x_1,y)\in\Omega'.
\]
The {\it unit periodicity cell} associated with ${\mathcal P}$ is a set in $\R^{N-1}_y$ defined by
\begin{equation}\label{unit-cell}
{\mathcal C}_{\mathcal P}= \Big\{ \sum_{i= 2}^{N} t_i \,{\boldsymbol p}_i  \mid\,  0 < t_i< 1 \Big\}.
\end{equation}

\begin{Thm}[Blocking for periodic walls]\label{th:blocking-periodic}
Assume that $\Omega_{b}$ is a uniformly Lipschitz domain. Then there exists $\ep>0$, depending on $f$, $b-a$ and $\Omega_b$, such that if $|\Omega_{a,b} \cap \{ (a,b)\times {\mathcal C}_{\mathcal P} \}| \leq \ep$, then blocking necessarily occurs.
\end{Thm}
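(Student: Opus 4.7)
My plan is to follow the same blueprint as Theorem~\ref{th:blocking-localized}, the only modification being that the variational construction of the barrier is performed on the unit periodicity cell rather than on a bounded region, and then extended to all of $\Omega$ by $\mathcal{P}$-periodicity in $y$. The goal is to produce a $\mathcal{P}$-periodic stationary supersolution $w_0$ of~\eqref{S} on $\Omega_{-1}:=\Omega\cap\{x_1>-1\}$ with $w_0(-1,y)\geq 1$ for all $y$, $\partial_{\nu}w_0\geq 0$ on $\partial\Omega\cap\Omega_{-1}$, and $w_0(x_1,y)\to 0$ uniformly in $y$ as $x_1\to+\infty$. Once such a $w_0$ is available, applying the parabolic comparison principle starting from a very negative initial time $t_0$ (at which $\bar u(t_0,\cdot)$ is arbitrarily small on $\Omega_{-1}$ by~\eqref{ubar-phi}, while $w_0\geq 1$ on $\{x_1=-1\}$) yields $\bar u\leq w_0$, hence $\bar v\leq w_0$, and blocking follows because $w_0\to 0$.

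To build $w_0$, I would restrict attention to the single cell $\Omega^{\mathcal{P}}_{-1}:=\Omega_{-1}\cap(\R\times\mathcal{C}_{\mathcal{P}})$ with $\mathcal{P}$-periodic lateral conditions. Pick $\theta_1\in(0,\alpha)$ small enough that $F(s):=\int_0^s f$ satisfies $F(s)<0$ on $(0,\theta_1]$; phase-plane analysis then furnishes a smooth decreasing profile $W\in C^2([b,+\infty))$ solving $W''+f(W)=0$, $W(b)=\theta_1$, $W(+\infty)=0$, via the first integral $\tfrac12(W')^2+F(W)=0$. Assemble $w_0$ from three pieces: (i) $w_0\equiv 1$ on $\{-1\leq x_1\leq a\}\cap\Omega^{\mathcal{P}}_{-1}$; (ii) on the downstream tail $\Omega_b^{\mathcal{P}}:=\Omega_b\cap(\R\times\mathcal{C}_{\mathcal{P}})$, a $\mathcal{P}$-periodic stationary supersolution $V$ with $V(b,\cdot)\geq\theta_1$ and $V(x_1,\cdot)\to 0$, obtained as a minimizer of the energy $\int_{\Omega_b^{\mathcal{P}}}\bigl(\tfrac12|\nabla v|^2 - F(v)\bigr)\,dx$ over $\mathcal{P}$-periodic $H^1$ functions with the appropriate left-trace; this minimizer handles the tilted Neumann condition on $\partial K\cap\Omega_b^{\mathcal{P}}$ that $W(x_1)$ alone need not satisfy; (iii) in the passage $\Omega_{a,b}^{\mathcal{P}}:=\Omega_{a,b}\cap(\R\times\mathcal{C}_{\mathcal{P}})$, a minimizer of
\begin{equation*}
J(w)=\int_{\Omega_{a,b}^{\mathcal{P}}}\Bigl(\tfrac12|\nabla w|^2 - F(w)\Bigr)\,dx
\end{equation*}
over $\mathcal{P}$-periodic $H^1$ functions with trace $1$ at $x_1=a$ and trace $\theta_1$ at $x_1=b$. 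Standard gluing arguments (taking the pointwise minimum of suitably extended pieces, since the minimum of supersolutions is again a supersolution) produce a single $\mathcal{P}$-periodic supersolution on $\Omega^{\mathcal{P}}_{-1}$, and extending by $\mathcal{P}$-periodicity in $y$ gives $w_0$ on all of $\Omega_{-1}$.

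The principal obstacle is ensuring that the minimizer of $J$ on the passage takes values only in $[\theta_1,1]$; a priori it could overshoot into the region $(\alpha,1)$ where $f>0$ allows a pulse incompatible with the decaying outer piece $V$, or undershoot below $\theta_1$ and spoil the match at $x_1=b$. This is precisely where the smallness hypothesis $|\Omega_{a,b}\cap((a,b)\times\mathcal{C}_{\mathcal{P}})|\leq\ep$ enters: via a Poincar\'e-type inequality of the form $\|w-c\|_{L^2(\Omega_{a,b}^{\mathcal{P}})}\leq C|\Omega_{a,b}^{\mathcal{P}}|^{1/N}\|\nabla w\|_{L^2(\Omega_{a,b}^{\mathcal{P}})}$, where the constant $C$ depends on the uniformly Lipschitz geometry of $\Omega_b$ (the source of the dependence $\ep=\ep(f,b-a,\Omega_b)$), the positive kinetic term in $J$ dominates the sign-indefinite $-F(w)$ term once $\ep$ is small, keeping the minimizer sandwiched between $\theta_1$ and $1$ and making the three-piece function a genuine global $\mathcal{P}$-periodic barrier, which completes the blocking argument.
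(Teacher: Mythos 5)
Your overall strategy---build a $\mathcal{P}$-periodic stationary barrier $w_0$ on $\Omega_{-1}$ with $w_0(-1,\cdot)\ge 1$, $w_0\to0$ as $x_1\to+\infty$, and compare with $\bar u$ via Lemma~\ref{lem:comparison-ancient}~(ii)---is exactly the paper's, but the construction of $w_0$ by gluing three independently minimized pieces has a gap that cannot be repaired without a substantially new idea. The downstream piece $V$ cannot be obtained as a global minimizer of $\int_{\Omega_b^{\mathcal P}}\bigl(\tfrac12|\nabla v|^2-F(v)\bigr)\,dx$ with prescribed left trace $\theta_1$: since $\int_0^1 f>0$, the density $-F(s)$ is \emph{strictly negative} for $s$ near $1$, so a test function that equals $1$ on a large subset of the unbounded set $\Omega_b^{\mathcal P}$ drives the energy to $-\infty$; the infimum is $-\infty$ and no global minimizer exists. (A small boundary value $\theta_1<\alpha$ does not save this---the escape to the state $1$ is a bulk effect, not a boundary effect.) This is precisely the difficulty both of the paper's constructions are designed to overcome: the first approach works with the corrected energy $J_{-1}$ in \eqref{J-1} and imposes the constraint $\overline{w}_j\le\delta$ on a decomposition $\{D_j\}$ of $\Omega_b$ with uniformly controlled Poincar\'e--Wirtinger constants, so the constrained energy is nonnegative and the constraint is shown to be inactive via \eqref{J-w0}--\eqref{ep}; the second approach truncates to $\Omega_{-1,R}$, proves local coercivity of $J$ around the reference profile $\zeta$ (Proposition~\ref{locmin}), finds a \emph{local} minimizer $w_R$ in a fixed $H^1$-ball about $\zeta$, and passes to the limit $R\to+\infty$.

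Two further steps are also not justified as written. First, gluing by ``pointwise minimum of suitably extended pieces'' does not yield the required barrier: any supersolution extension of $V$ to $\{-1\le x_1<b\}$ is bounded by $\theta_1<1$ near $x_1=b$, so if you extend, say, by the constant $\theta_1$, the pointwise minimum drops to $\theta_1<1$ on $\{x_1=-1\}$, destroying the Dirichlet data on which the comparison with $\bar u$ depends. A genuine piecewise construction would need matched traces at $x_1=a,b$ together with the correct sign of the normal-derivative jump at each interface (which in turn requires monotonicity of each piece in $x_1$), none of which your outline establishes; the paper avoids gluing altogether by producing a single $C^2$ solution on all of $\Omega_{-1}$. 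Second, the Poincar\'e estimate $\|w-c\|_{L^2(\Omega_{a,b}^{\mathcal P})}\le C\,|\Omega_{a,b}^{\mathcal P}|^{1/N}\,\|\nabla w\|_{L^2}$ with $C$ controlled only by the Lipschitz geometry of $\Omega_b$ is not a theorem: the Faber--Krahn scaling $|\Omega|^{1/N}$ is a \emph{Dirichlet} phenomenon, while the passage has Neumann conditions on $\partial K$ and can be arbitrarily degenerate (long, thin, with bottlenecks), so the mixed/Neumann Poincar\'e constant is not bounded in terms of the measure alone. The paper's Remark~\ref{rk:Dj} explains exactly why such a bound requires a shape hypothesis on each piece $D_j$, not just a measure bound.
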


As in the previous subsection, we prove this theorem by constructing a {\em barrier function}~$w_{0}$ in the region $\Omega_{-1}$ satisfyin $w_{0}(-1, y) = 1$ for all $y\in \R^{N-1}$ and $w_{0}(x_{1}, y) \to 0$ as $x_{1}\to \infty$. As before, the construction of this barrier function is based on a variational argument, but this time, the variational argument is carried out on the unit periodicity cell ${\mathcal C}_{\mathcal P}$.


\section{Main results 3: sufficient conditions for propagation}\label{s:main-propagation}

Here we discuss sufficient conditions for propagation. As mentioned in Introduction, we present three different types of walls that allow propagation, namely: 
\[
\hbox{(a) walls with large holes, \quad
(b) small-capacity walls, \quad (c) parallel-blade walls.}
\]
We begin with the case (a).


\subsection{Conditions for propagation (a): walls with large holes}
\label{ss:main-large-holes}

This is a wall such that at least one of its holes is large enough to allow a ball of radius $R_0$ to pass through it.  Here the constant $R_0>0$ is defined as follows. Consider the problem
\begin{equation}\label{Psi0}
\begin{cases}
\,\Delta \Psi+f(\Psi)=0\ \ &(|x|<R),\\
\,\Psi =0\ \ &(|x|=R),\\
\,0<\Psi<1\ \ &(|x|<R).
\end{cases}
\end{equation}
This problem has a solution if $R>0$ is sufficiently large. To see this, consider the functional
\[
H[\Psi]=\int_{|x|\leq R}\left(\frac12 |\nabla \Psi|^2 - F(\Psi)\right)dx, \quad F(s):=\int_0^s f(\sigma)d\sigma
\]
under the boundary condition $\Psi(x)=0\,(|x|=R)$. We extend the domain of $F$ so that $F(s)<0$ for $s<0$ and $F(s)<F(1)$ for $s>1$. Since $F(1)=\max_{s\in\R} F(s)>F(0)=0$ by the assumption \eqref{f}, $H[\Psi]$ takes a negative value if $R$ is sufficiently large. The global minimizer of $H$ is therefore not $0$ for such $R$, and it is a solution of \eqref{Psi0}.  Furthermore, $\Psi$ is a radially symmetric decreasing function and satisfies
\begin{equation}\label{Psi(0)}
\alpha<\Psi(0)<1.
\end{equation}

Now we define
\begin{equation}\label{R0}
R_0 = \min\{R>0 \mid \hbox{\eqref{Psi0} has a solution} \}.
\end{equation}
The existence of the above minimum follows from standard elliptic estimates. The main result of this subsection is the following:

\begin{Thm}[Walls with large holes]\label{th:large-holes}
Suppose that there exists a continuous curve $\gamma$ connecting some point $P_1$ in the region $\{x_1<0\}$ and some point $P_2$ in the region $\{x_1>M\}$ such that the distance between $K$ and any point on $\gamma$ is larger than or equal to the constant~$R_0$ defined in \eqref{R0}. Then propagation occurs.
\end{Thm}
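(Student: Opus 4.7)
The strategy is to use the radially symmetric, compactly supported stationary subsolution $\Psi^P(x):=\Psi(x-P)\mathbf{1}_{B_{R_0}(P)}(x)$ (with $\Psi$ as in \eqref{Psi0} for $R=R_0$) as a moving lower barrier, slid continuously along $\gamma$ from a point where the planar front has already saturated, across the wall, and beyond. By the definition of $R_0$, whenever $d(P,K)\ge R_0$ the ball $B_{R_0}(P)$ lies in $\Omega$, so $\Psi^P$ is a genuine compactly supported (generalized) stationary subsolution of \eqref{E}. The aim is to show $\bar v\ge \Psi^P$ for every $P$ on $\gamma$, and then to prolong the sliding into the obstacle-free region $\{x_1>M\}$.

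First I would use Theorem~\ref{th:ubar-phi}, i.e.\ the convergence $\bar u(t,\cdot)\to\phi(x_1-ct)$ as $t\to-\infty$, to obtain some $t_0\in\R$ (with $-t_0$ large) such that $\bar u(t_0,\cdot)\ge\Psi^{P_1}$ on $\Omega$; here we use that $P_1\in\{x_1<0\}$, so $\phi(P_{1,1}-ct_0)$ is close to~$1$ on a neighbourhood of $P_1$ while $\Psi^{P_1}<1$ is compactly supported. Parabolic comparison with the stationary subsolution then yields $\bar u(t,\cdot)\ge\Psi^{P_1}$ for all $t\ge t_0$, hence $\bar v\ge \Psi^{P_1}$. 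Parametrizing $\gamma:[0,1]\to\R^N$ with $\gamma(0)=P_1$ and $\gamma(1)=P_2$, I would set
\[
\Lambda=\{\lambda\in[0,1]\mid \bar v\ge \Psi^{\gamma(\lambda)}\ \text{on}\ \Omega\},
\]
which is nonempty and closed by the continuous dependence of $\Psi^{\gamma(\lambda)}$ on $\lambda$, and prove it is open so that $\Lambda=[0,1]$ by connectedness; in particular $\bar v(P_2)\ge \Psi(0)>\alpha$.

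Once $P_2$ has been reached, I would extend the sliding by joining $P_2$ to any point $Q$ with $x_1(Q)>M+R_0$ via a straight segment inside the obstacle-free half-space $\{x_1>M\}$; the same argument then gives $\bar v(Q)\ge \Psi(0)>\alpha$. Combining this uniform-in-$y$ lower bound $\bar v>\alpha$ on $\{x_1>M+R_0\}$ with the dichotomy Theorem~\ref{th:dichotomy}, which forces $\bar v(x_1,y)$ to converge to either $0$ or $1$ uniformly in $y$ as $x_1\to+\infty$, rules out blocking and therefore yields propagation.

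The hard part will be the openness of $\Lambda$. For $\lambda_0\in\Lambda$, setting $w:=\bar v-\Psi^{\gamma(\lambda_0)}$ on $B_{R_0}(\gamma(\lambda_0))$ gives $w\ge 0$ satisfying a linear elliptic equation $\Delta w+c(x)w=0$ with $c\in L^\infty$, while $w=\bar v>0$ on the boundary; the strong maximum principle rules out $w\equiv 0$ and yields $w\ge\delta>0$ on the closed ball. For $\lambda$ close to $\lambda_0$ the inequality $\bar v\ge\Psi^{\gamma(\lambda)}$ has to be checked on the whole of $B_{R_0}(\gamma(\lambda))$; the delicate piece is the thin shell $B_{R_0}(\gamma(\lambda))\setminus B_{R_0}(\gamma(\lambda_0))$, where $\Psi^{\gamma(\lambda_0)}$ has already been extended by zero but $\Psi^{\gamma(\lambda)}$ is still positive. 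This is handled by the uniform continuity of $\Psi$ near $\partial B_{R_0}$, where it vanishes, together with a positive lower bound on $\bar v$ on a neighbourhood of $\overline{B_{R_0}(\gamma(\lambda_0))}$.
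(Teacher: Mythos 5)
Your strategy — sliding the compactly supported stationary subsolution $\Psi^P$ along $\gamma$ and into $\{x_1>M\}$, showing via the strong maximum principle that it stays below $\bar v$, and deducing $\bar v>\alpha$ on $\{x_1>M+R_0\}$ — is the same as the paper's, and your open-closed formulation of the slide, as well as your appeal to Theorem~\ref{th:dichotomy} at the end (the paper uses a short direct comparison instead), are fine. However, your initialisation contains a sign error that breaks the first step. You claim that for $-t_0$ large, $\phi(P_{1,1}-ct_0)$ is close to $1$ near $P_1$, so $\bar u(t_0,\cdot)\ge\Psi^{P_1}$. But $\phi$ is decreasing with $\phi(-\infty)=1$, $\phi(+\infty)=0$, and $c>0$, hence $P_{1,1}-ct_0\to+\infty$ and $\phi(P_{1,1}-ct_0)\to 0$ as $t_0\to-\infty$: at very negative times the planar front has retreated far to the left of the fixed point $P_1$, so $\bar u(t_0,\cdot)$ is close to $0$, not $1$, on a neighbourhood of $P_1$, and the desired inequality fails. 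Taking $t_0$ less negative does not help, since the convergence~\eqref{ubar-phi} is only asymptotic. The paper avoids this by exploiting~\eqref{vbar-1}: since $\bar v\to1$ as $x_1\to-\infty$ and $\sup\Psi^P=\Psi(0)<1$, one has $\Psi^{P_0}<\bar v$ for any $P_0$ with sufficiently negative first coordinate, so one starts the slide at such a $P_0$ (for instance $P_0=P_1-L\,{\boldsymbol e}_1$ with $L$ large, which keeps $d(\cdot,K)\ge R_0$ along $P_0P_1$ because $K\subset\{x_1\ge0\}$) and slides first to $P_1$, then along $\gamma$.

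A second, smaller issue concerns the extension past $P_2$. You propose a ``straight segment inside the obstacle-free half-space $\{x_1>M\}$'', but what the slide requires is not merely avoiding $K$ but $d(\cdot,K)\ge R_0$ along the whole path (otherwise $B_{R_0}(P)$ meets $K$ and the Neumann condition for $\Psi^P$ on $\partial\Omega$ can fail), and a straight segment from $P_2$ to a point $Q$ with $x_1(Q)>M+R_0$ may pass within distance $<R_0$ of $K$. The remedy is to first move from $P_2$ in the $+{\boldsymbol e}_1$ direction, along which $d(\cdot,K)$ is non-decreasing because $K\subset\{x_1\le M\}$, until $x_1\ge M+R_0$; the half-space $\{x_1\ge M+R_0\}$ then lies entirely in $\{d(\cdot,K)\ge R_0\}$ and $Q$ can be reached freely, which is in effect what the paper does with its auxiliary curve $\gamma_2$.
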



\subsection{Conditions for propagation (b): small-capacity walls}
\label{ss:main-small-capacity}

Our second type of wall is a wall of small capacity in a certain sense. Let us first recall the standard notion of zero capacity in $\R^N$.

\begin{Def}[Set of zero capacity]\label{def:zero-capacity}
Let ${\mathcal K}_0$ be a compact set in $\R^N$. We say that ${\mathcal K}_0$ has zero capacity if the following holds:
\begin{equation}\label{zero-capacity1}
\inf\left\{\int_{D} |\nabla w|^2 dx \;\Big |\; w\in C^1(\overline{D}),\ w\geq 1\;(x\in {\mathcal K}_0),\ w=0\;(x\in \partial D) \right\}=0,
\end{equation}
where $D$ is a bounded open set with smooth boundary containing ${\mathcal K}_0$. 
\end{Def}

As one can easily verify, the condition \eqref{zero-capacity1} depends only on ${\mathcal K}_0$ and does not depend on the choice of the open set $D\supset {\mathcal K}_0$. It is well-known that, if ${\mathcal K}_0$ is a set of zero capacity and if $D$ is an open set containing ${\mathcal K}_0$, then any bounded harmonic function defined on~$D\setminus {\mathcal K}_0$ can be extended to a harmonic function on $D$. Similarly, if $v$ is a bounded solution of $\Delta v+f(v)=0$ on $D\setminus {\mathcal K}_0$, then it can be extended to a solution of $\Delta v+f(v)=0$ on~$D$~(\cite{S}).

Here are two typical examples of situations in which a set has zero capacity (\cite{Po}).
\begin{itemize}
\item[(C1)] The $m$-dimensional Hausdorff measure of ${\mathcal K}_0$ is $0$ for any $m<N-2$;
\item[(C2)] ${\mathcal K}_0$ is a locally finite union of smooth $(N-2)$-dimensional manifolds.
\end{itemize}
For example, a discrete set has capacity $0$ for any $N\geq 2$. If $N=3$, a locally finite union of curves (allowing intersections) has zero capacity (the case (C2)).

Our main result of this type of walls is the following:

\begin{Thm}[Small capacity walls]\label{th:small-capacity}
Let $K^\ep\;(0<\ep\leq \ep_0)$ be a family of walls satisfying
\[
K^\ep\subset\{x\in\R^N\mid 0\leq x_1\leq M\}, \quad 
\limsup_{\ep\to 0}K^\ep\subset {\mathcal K}_1 \cup {\mathcal K}_0,
\]
where ${\mathcal K}_1$ is a closed set (possibly empty) satisfying the same condition as in Theorem \ref{th:large-holes} for $K={\mathcal K}_1$, while ${\mathcal K}_0$ is a closed set of zero capacity. 
Then for all sufficiently small $\ep>0$, propagation occurs for $K^\ep$.
\end{Thm}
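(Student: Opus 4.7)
The plan is to argue by contradiction, combining Corollary~\ref{cor:blocking} with the classical removable-singularity theorem for sets of zero capacity and then invoking the sliding argument that underlies Theorem~\ref{th:large-holes}.

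Suppose that the conclusion fails, so that there is a sequence $\ep_n\downarrow 0$ with $K^{\ep_n}$ blocking. Each $K^{\ep_n}$ lies in the fixed slab $\{0\le x_1\le M\}$, so by Blaschke's selection theorem we may extract a subsequence along which $K^{\ep_n}\to K^\infty$ in Hausdorff distance; the assumption $\limsup_{\ep\to 0}K^\ep\subset{\mathcal K}_1\cup{\mathcal K}_0$ then forces $K^\infty\subset{\mathcal K}_1\cup{\mathcal K}_0$. The corresponding limit profiles $\bar v_n$ are uniformly bounded by $1$ and satisfy $\Delta\bar v_n+f(\bar v_n)=0$ away from $K^{\ep_n}$; interior Schauder estimates then give local $C^{2,\alpha}$ bounds on every compact subset of $\R^N\setminus K^\infty$, and after a further extraction $\bar v_n\to\bar v_\infty$ locally uniformly there, with $\bar v_\infty$ a classical solution of $\Delta v+f(v)=0$ on $\R^N\setminus K^\infty$. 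Passing to the limit in \eqref{vbar-1} yields $\bar v_\infty(x)\to 1$ as $x_1\to-\infty$, while Corollary~\ref{cor:blocking} gives $\bar v_\infty(x_1,y)\to 0$ uniformly in $y$ as $x_1\to+\infty$.

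Restricting $\bar v_\infty$ to the smaller open set $\R^N\setminus({\mathcal K}_1\cup{\mathcal K}_0)$ and invoking the removable-singularity property recalled right after Definition~\ref{def:zero-capacity}, the function $\bar v_\infty$ extends across ${\mathcal K}_0\setminus{\mathcal K}_1$ to a bounded classical solution $\tilde v$ of $\Delta v+f(v)=0$ on $\R^N\setminus{\mathcal K}_1$, still satisfying $\tilde v\to 1$ as $x_1\to-\infty$ and $\tilde v(x_1,y)\to 0$ as $x_1\to+\infty$. No boundary condition on $\partial{\mathcal K}_1$ is imposed on $\tilde v$, and crucially none will be needed. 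Next I run the sliding argument of Theorem~\ref{th:large-holes} against $\tilde v$. Let $\gamma$ be the curve provided by the hypothesis on ${\mathcal K}_1$, joining $P_1\in\{x_1<0\}$ to $P_2=(p_1,\bar y)\in\{x_1>M\}$ at distance at least $R_0$ from ${\mathcal K}_1$. Because ${\mathcal K}_1\subset\{x_1\le M\}$, the ray $\{(t,\bar y):t\ge p_1\}$ also stays at distance at least $R_0$ from ${\mathcal K}_1$, and I extend $\gamma$ by this ray. Centering the compactly supported subsolution $\Psi^P$ of~\eqref{Psi-P} at a sliding point $P$ along this extended curve, the closed ball $\overline{B_{R_0}(P)}$ is disjoint from ${\mathcal K}_1$, so the elliptic comparison applies on $\R^N\setminus{\mathcal K}_1$ without any reference to $\partial{\mathcal K}_1$. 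For $P$ far to the left, $\Psi^P\le\tilde v$ because $\tilde v$ is close to $1$ there; sliding $P$ along the extended curve, the standard bistable sliding argument preserves $\Psi^P\le\tilde v$ at every stage. Evaluating at $P=(t,\bar y)$ with $t$ arbitrarily large gives $\tilde v(t,\bar y)\ge\Psi^P(P)=\Psi(0)>\alpha$ by~\eqref{Psi(0)}, contradicting $\tilde v(x_1,y)\to 0$ as $x_1\to+\infty$.

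The main technical obstacle is the convergence step: justifying that $\bar v_n\to\bar v_\infty$ locally uniformly on the whole open set $\R^N\setminus K^\infty$ (and not merely on a thickening of its complement), so that $\bar v_\infty$ solves the equation classically on this set and the removable-singularity theorem genuinely applies to the compact set ${\mathcal K}_0$ embedded in that solution. Once this is secured, the removable-singularity extension and the sliding argument are essentially routine.
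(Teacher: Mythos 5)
Your argument is correct and follows the paper's own proof very closely: contradiction hypothesis, extraction of a convergent subsequence of limit profiles, removability of ${\mathcal K}_0$ via the zero-capacity extension theorem, and the sliding argument of Theorem~\ref{th:large-holes} applied to the limit to contradict Corollary~\ref{cor:blocking}. The extra detail you supply (Blaschke selection, interior Schauder estimates for the convergence, explicit extension of $\gamma$ beyond $P_2$, and the observation that no boundary condition on $\partial{\mathcal K}_1$ is needed since $\overline{B_{R_0}(P)}$ avoids ${\mathcal K}_1$) is all consistent with, and implicit in, the paper's more terse version of the same argument.
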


\begin{figure}[h]
\begin{center}
\includegraphics[width=0.62\textwidth]
{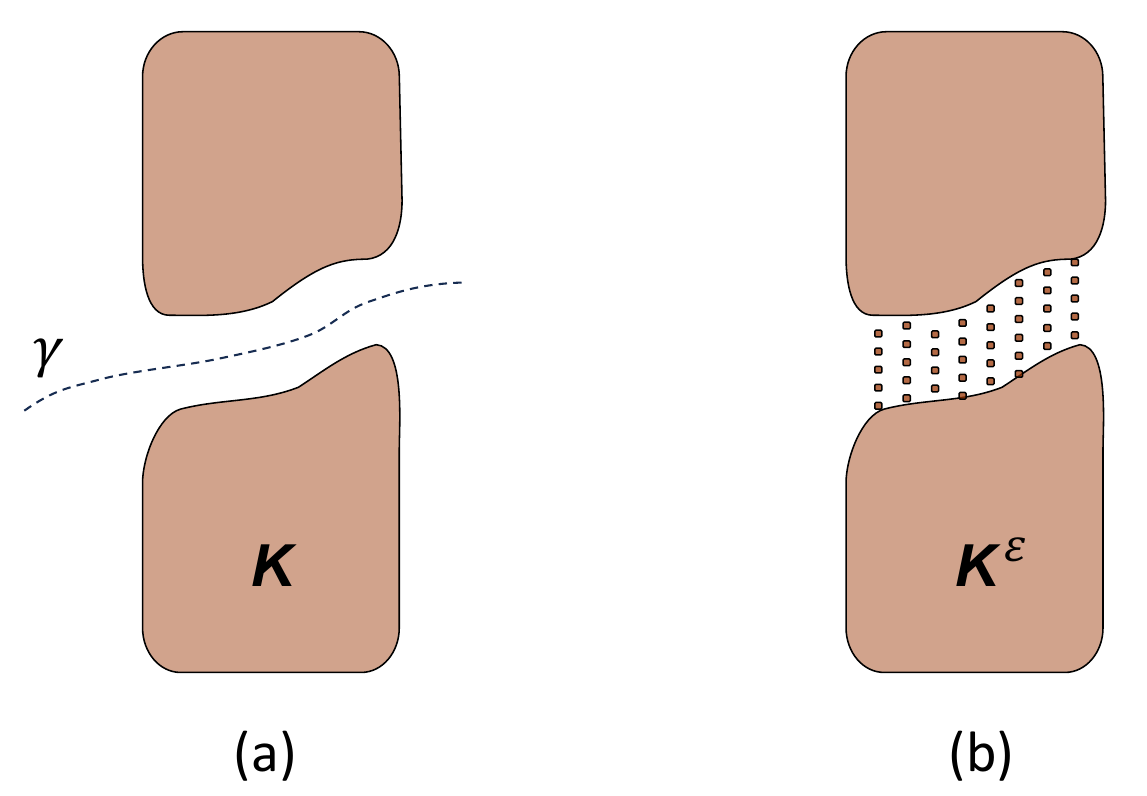}
\end{center}
\vspace{-10pt}
\caption{(a) wall with a large hole; \ \ (b) hole filled with small-capacity debris}
\label{fig:large-hole}
\end{figure}

Figure~\ref{fig:large-hole} (a) shows an example to which Theorem~\ref{th:large-holes} applies. The wall has a tunnel that allows a ball of radius $R_0$ to pass through. The dotted line indicates the curve $\gamma$.  Figure~\ref{fig:large-hole} exemplifies the situation to which Theorem~\ref{th:small-capacity} applies. Here, the same tunnel as in (a) is filled with debris-like objects. The front penetrates through the debris if its capacity is sufficiently small. In the special case where ${\mathcal K}_1=\emptyset$, every part of $K^\ep$ has small capacity.


\subsection{Conditions for propagation (c): parallel-blade walls}
\label{ss:main-parallel-blade}

Our third type of wall consists of objects that are very thin (like thin blades, possibly curved) that are all parallel to the $x_1$-axis. More precisely, it is defined as follows: 

\begin{itemize}
\item[(P)] Let $\Sigma\subset\R^{N-1}_y$ be a closed ${\mathcal P}$-periodic set formed by a locally finite union of smooth $(N-2)$-dimensional manifolds that intersect transversely with one another if they ever intersect, so that the boundary of each connected component of the set $\R^{N-1}_y\setminus \Sigma$ is a uniformly Lipschitz manifold.  For each small $\ep>0$, let ${\mathcal N}_\ep(\Sigma)$ denote the $\ep$-neighborhood of $\Sigma$. Finally, let $K^\ep$ $(0<\ep<\ep_0)$ be a family of ${\mathcal P}$-periodic closed sets in $\R^N$ with smooth boundary satisfying the following conditions:
\begin{equation}\label{P1}
K^\ep\subset [0,M]\times {\mathcal N}_\ep(\Sigma) \ \ \hbox{for all}\ \ 0<\ep\leq \ep_0, 
\end{equation}
\begin{equation}\label{P2}
\int_{\partial K^\ep\cap \Delta_{\mathcal P}} |\nu\cdot {\boldsymbol e}_1| \,dS_x\leq \ep_1  \ \ \hbox{for all}\ \ 0<\ep\leq \ep_0, 
\end{equation}
where $\nu$ denotes the outward unit vector to $\partial K^\ep$, $ {\boldsymbol e}_1$ the unit vector in the $x_1$ direction, $\Delta_{\mathcal P}:=\R\times {\mathcal C}_{\mathcal P}$ denotes the infinite cylinder in $\R^N$ whose cross section is ${\mathcal C}_{\mathcal P}$, the unit periodicity cell, 
and $\ep_1=\ep_1(\ep)$ is an $\ep$-dependent quantity that tends to $0$ as $\ep\to 0$.
\end{itemize}

The main theorem for this type of wall is the following:

\begin{Thm}[Parallel-blade walls]\label{th:parallel}
Let the family of ${\mathcal P}$-periodic obstacles $K^\ep$ satisfy \eqref{P1} and \eqref{P2}. Then for all sufficiently small $\ep>0$, propagation occurs for $K^\ep$.
\end{Thm}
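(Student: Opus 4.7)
The plan is to argue by contradiction: suppose blocking occurs for $K^\ep$ along a sequence $\ep_k\downarrow 0$. Because each $K^\ep$ is $\mathcal{P}$-periodic, the uniqueness statement in Theorem~\ref{th:ubar-phi} forces $\bar u^\ep$, and hence the limit profile $\bar v^\ep$, to be $\mathcal{P}$-periodic in $y$, so the whole analysis can be carried out on a single cell $\Omega^\ep\cap\Delta_\mathcal{P}$ with $\Delta_\mathcal{P}=\R\times\mathcal{C}_\mathcal{P}$. The \emph{uniform} form of the dichotomy in Theorem~\ref{th:dichotomy} then provides, for each $\delta\in(0,1/4)$, a single threshold $M^\delta$ independent of $\ep$ such that, under blocking, $\bar v^\ep(x)<\delta$ whenever $x_1>M^\delta$.

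The engine is the sliding family of quasi-subsolutions $w^\lambda(x):=\phi(x_1-\lambda)$, $\lambda\in\R$. Since $\phi''+c\phi'+f(\phi)=0$ with $c>0$ and $\phi'<0$, we have $\Delta w^\lambda+f(w^\lambda)=-c\phi'(x_1-\lambda)>0$, so $w^\lambda$ is a strict classical subsolution of $\Delta v+f(v)=0$ on all of $\R^N$. Only the Neumann condition is violated, and mildly so: on $\partial K^\ep$,
\[
\frac{\partial w^\lambda}{\partial\nu}=\phi'(x_1-\lambda)\,(\nu\cdot\boldsymbol{e}_1),
\]
and hypothesis \eqref{P2} yields $\int_{\partial K^\ep\cap\Delta_\mathcal{P}}|\partial_\nu w^\lambda|\,dS\le\|\phi'\|_\infty\,\ep_1(\ep)$, which vanishes as $\ep\to 0$.

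Setting $\psi^\lambda:=(w^\lambda-\bar v^\ep)^+$ and testing the identity $\Delta(w^\lambda-\bar v^\ep)=-c\phi'(x_1-\lambda)+f(\bar v^\ep)-f(w^\lambda)$ against $\psi^\lambda$ over $\Omega^\ep\cap\Delta_\mathcal{P}$, the $\mathcal{P}$-periodicity in $y$ cancels the lateral contributions while $\partial_\nu\bar v^\ep=0$ on $\partial K^\ep$ leaves only the Neumann defect of $w^\lambda$. After dropping the favorable term $c\int|\phi'|\psi^\lambda$, this yields
\[
\int_{\Omega^\ep\cap\Delta_\mathcal{P}}|\nabla\psi^\lambda|^2\;\leq\;L\int_{\Omega^\ep\cap\Delta_\mathcal{P}}(\psi^\lambda)^2+C\,\ep_1(\ep),
\]
with $L=\|f'\|_{L^\infty([0,1])}$ and $C$ independent of $\ep$ and $\lambda$. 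A comparison of exponential decay rates at $x_1\to-\infty$---$\bar v^\ep$ relaxes to $1$ at rate $\sqrt{-f'(1)}$, strictly faster than the corresponding rate $\tfrac{1}{2}(-c+\sqrt{c^2-4f'(1)})$ of $\phi$---makes $\psi^\lambda$ vanish identically on a left half-cylinder. The refined Poincar\'e inequality of Lemma~\ref{lem:poincare} then supplies a bound $\int(\psi^\lambda)^2\leq C_P\int|\nabla\psi^\lambda|^2$ with $C_P$ uniform in $\ep$ and $\lambda$; combining the two inequalities and absorbing gives, for every $\tau>0$ and every $\lambda\in\R$,
\[
\bigl|\{w^\lambda>\bar v^\ep+\tau\}\cap\Delta_\mathcal{P}\bigr|\;\leq\;C_\tau\,\ep_1(\ep).
\]

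Sliding $\lambda$ to the right yields the contradiction: fix $\tau<1/4$ and pick $\lambda_0$ so large that $\phi(x_1-\lambda_0)>1-\tau$ on the strip $\{M^\tau<x_1<M^\tau+1\}$. Since $\bar v^\ep<\tau$ on that strip by the blocking hypothesis, $w^{\lambda_0}-\bar v^\ep>1-2\tau>\tau$ throughout, so $|\{w^{\lambda_0}>\bar v^\ep+\tau\}\cap\Delta_\mathcal{P}|\ge\tfrac{1}{2}|\mathcal{C}_\mathcal{P}|$, a fixed positive constant independent of $\ep$---directly at odds with $C_\tau\ep_1(\ep)\to 0$. The hard part is therefore the refined Poincar\'e inequality itself: as $\ep\to 0$ the cell $\Omega^\ep\cap\Delta_\mathcal{P}$ is sliced by ever-thinner blades converging onto the codimension-one set $[0,M]\times\Sigma$, and a naive Poincar\'e constant would degenerate; the transversality hypothesis in (P) together with the uniform Lipschitz regularity of the connected components of $\mathcal{C}_\mathcal{P}\setminus\Sigma$ are exactly what enables a uniform constant to be established by gluing local Poincar\'e inequalities across the blades in Appendix~\ref{s:poincare}.
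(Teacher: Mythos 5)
Your overall strategy is the same as the paper's (slide a quasi-subsolution, test the difference, invoke the refined Poincar\'e inequality, contradict blocking), but your choice of the sliding profile $\phi$ introduces a fatal gap that the paper carefully avoids by using a different profile.

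The paper slides $\rho^\lambda(x)=\rho(x_1-\lambda)$ where $\rho$ solves the \emph{modified} ODE $\rho''+f(\rho)=\delta>0$ on $(-\infty,0)$ with $\rho(x_1)=0$ for $x_1\ge0$. This has two decisive consequences. First, $\rho^\lambda$ has one-sided compact support, so the exceedence set $D^\lambda=\{\rho^\lambda>\bar v^\ep\}\cap\Delta_{\mathcal P}$ is confined to a bounded strip $\{\lambda_0<x_1<\lambda\}$; you only ever apply the Poincar\'e inequality to that set, and the continuation argument controls its measure directly. Second — and this is the point you lose when you ``drop the favorable term'' — the constant defect $\delta$ in $\Delta w^\lambda + q(x)w^\lambda=\delta$ is exactly what yields, after testing against $\chi_\eta(w^\lambda)$, the estimate $\delta|D^\lambda|\le L_f\int_{D^\lambda}w^\lambda+k\,\ep_1$, i.e.\ a bound on $|D^\lambda|$ itself in terms of $\int w^\lambda$. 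Your subsolution $\phi$ gives defect $-c\phi'(x_1-\lambda)>0$, which is strictly positive but decays exponentially, so there is no uniform $\delta$ and no such estimate.

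Concretely, the step ``the refined Poincar\'e inequality supplies $\int(\psi^\lambda)^2\le C_P\int|\nabla\psi^\lambda|^2$ with $C_P$ uniform in $\ep$ and $\lambda$'' is false. Lemma~\ref{lem:poincare} is of the form $\int|\nabla w|^2\ge C\,|\mathrm{supp}(w)\cap\Delta_{\mathcal P}|^{-2/N}\int w^2$, valid only when $|\mathrm{supp}(w)\cap\Delta_{\mathcal P}|\le\eta_0$, so the effective constant is $C^{-1}|\mathrm{supp}(\psi^\lambda)|^{2/N}$ and \emph{degenerates} as the support grows. With $w^\lambda=\phi(x_1-\lambda)$ the set $\{\phi(x_1-\lambda)>\bar v^\ep\}$ has measure growing linearly in $\lambda$: under blocking $\bar v^\ep$ decays to $0$ at rate $\sqrt{-f'(0)}$, while $\phi$ decays at the strictly larger rate $\tfrac12(c+\sqrt{c^2-4f'(0)})$, so a direct comparison of the two tails shows the right endpoint of the exceedence set drifts like a constant multiple of $\lambda$ (and the left endpoint drifts to $-\infty$ by the analogous comparison near $1$). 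Thus $|\mathrm{supp}(\psi^\lambda)|$ exceeds $\eta_0$ long before $\lambda$ reaches $M^\tau$, and the Poincar\'e step cannot be invoked there. Independently, your energy inequality only bounds $\int(\psi^\lambda)^2$, hence via Chebyshev the super-level sets $\{\psi^\lambda>\tau\}$, but never $|\mathrm{supp}(\psi^\lambda)|=|\{\psi^\lambda>0\}|$ — and it is the latter that must be kept small for the Poincar\'e hypothesis (and for a continuation argument) to close. The paper's compactly supported $\rho$ with the strictly positive source $\delta$ is not a convenience; it is the mechanism that bounds the measure of the exceedence set itself and makes the bootstrap self-consistent.
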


\begin{Rk}\label{rk:blade}
To give the reader an idea of what the conditions \eqref{P1} means, let us consider the case $N=2$.  In this case, $\Sigma$ is a discrete set of points and $[0,M]\times\Sigma$ is a set of line segments that are all parallel to the $x_1$-axis and are aligned periodically in the $y$ direction.  The condition \eqref{P1} implies that $K^\ep$ consists of objects that are contained in an $\ep$ neighborhood of those line segments, therefore they are all thin objects of thickness at most $2\ep$. The condition~\eqref{P2} implies that the surface of those thin objects are rather flat in the middle part, while around  their edge the surface can have many tiny bumps so long as the total  length of the boundary around the edge remains small. In the case $N=3$, walls consisting of thin parallel panels and also those with honeycomb structure are typical examples of $K^\ep$.  
\end{Rk}

\begin{figure}[h]
\begin{center}
\includegraphics[width=0.82\textwidth]
{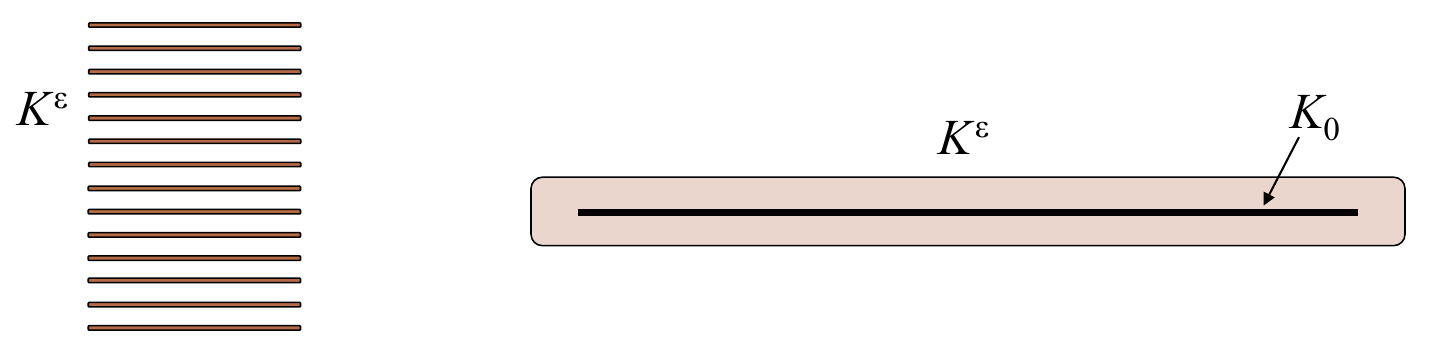}
\end{center}
\vspace{-10pt}
\caption{Image of a parallel-blade wall (left) and a magnified view of each blade (right). $K_0:=\lim_{\ep\to 0}K^\ep$ has positive $N-1$ Hausdorff measure, therefore its capacity is positive.}
\label{fig:parallel-blade}
\end{figure}


\subsection{Complete invasion}\label{ss:complete}

In this subsection we give a sufficient condition for the limit profile $\bar{v}$ to be identically equal to $1$, which we call {\it complete invasion}. 

\begin{Def}\label{def:directionary-convex}
$K$ is called {\it directionally convex} in the direction $x_1$ if, for some $a\in \R$, the following holds. Here ${\boldsymbol e}_1$ denotes the unit vector parallel to the $x_1$ axis.
\begin{itemize}\setlength{\parskip}{0pt}
\item[{\rm (i)}] for every line $\Lambda$ parallel to ${\boldsymbol e}_1$, the set $K\cap\Lambda$ is either a single line segment or empty;
\item[{\rm (ii)}] $K\cap \{x \in \R^N\mid x_1=a\}=\pi(K)$, where $\pi(K)$ is the orthogonal projection of $K$ onto the hypersurface $x_1=a$.
\end{itemize}
\end{Def}

Note that the above condition is slightly more stringent than the usual notion of ``directional convexity" because of the second condition $K\cap \{x\mid x_1=a\}=\pi(K)$. An example of directionally convex objects is given in Figure~\ref{fig:complete} (a). 

\begin{Thm}[Complete invasion]\label{th:complete}
Assume that $K$ is directionally convex in the direction $x_1$. If propagation occurs, then $\bar v(x) =1$ for all $x\in\Omega$. 
\end{Thm}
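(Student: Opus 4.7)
My plan is to proceed by contradiction, combining the dichotomy theorem with a sliding argument that crucially exploits the directional convexity of $K$.

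First, by Theorem~\ref{th:dichotomy} and the propagation hypothesis, $\bar v(x_1,y)\to 1$ uniformly in $y$ as $x_1\to+\infty$; together with~\eqref{vbar-1} this gives the analogous limit at $x_1\to-\infty$, so for every $\varepsilon>0$ there is $L_\varepsilon>0$ with $\bar v\ge 1-\varepsilon$ on $\Omega\cap\{|x_1|\ge L_\varepsilon\}$. Suppose for contradiction that $\bar v\not\equiv 1$. A standard local strong maximum principle argument (linearizing near $\bar v=1$, where $\bar v$ is superharmonic because $f\ge 0$ on $[\alpha,1]$) gives $\bar v<1$ strictly on $\Omega$. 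Next I would argue that $\bar v<\alpha$ somewhere: if instead $\bar v\ge\alpha$ everywhere, then $w:=1-\bar v\ge 0$ is bounded and subharmonic on $\Omega$ (since $\Delta w=f(1-w)\ge 0$ when $\bar v\in[\alpha,1]$), satisfies $\partial_\nu w=0$ on $\partial K$, and $w\to 0$ uniformly as $x_1\to\pm\infty$; a Phragm\'en--Lindel\"of argument on strips $\Omega\cap\{|x_1|\le L\}$ (using Hopf's lemma to exclude a maximum on $\partial K$ and sending $L\to+\infty$) forces $w\equiv 0$, contradicting $\bar v\not\equiv 1$. Hence there is $x_0\in\Omega$ with $\bar v(x_0)<\alpha$, and the connected component $D_0$ of $\{\bar v<\alpha\}$ containing $x_0$ is a nonempty ``pocket'' of $\Omega$, bounded in $x_1$, with $\bar v=\alpha$ on $\partial D_0\cap\Omega$.

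The core of the proof is to rule out this pocket using directional convexity. By Definition~\ref{def:directionary-convex}, every line $\ell_y=\{(x_1,y):x_1\in\R\}$ meets $\Omega$ in either $\ell_y$ itself (if $y\notin\pi(K)$) or in two rays to $\pm\infty$ (if $y\in\pi(K)$); in particular, from any point of $D_0$ one can travel along $\Omega$ in the $\pm x_1$ direction into the safe region $\{\bar v\ge 1-\varepsilon\}$. Moreover, directional convexity pins down the sign of the outward normal on $\partial K$: $\nu_1\le 0$ on the upstream face $\{x_1=a_1(y)\}$, $\nu_1\ge 0$ on the downstream face $\{x_1=a_2(y)\}$, and $\nu_1=0$ on the side faces parallel to ${\boldsymbol e}_1$. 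I would then perform a sliding comparison between $\bar v$ and a family of barrier functions---most naturally the translated planar fronts $u_s(x)=\phi(x_1-s)$, or, in narrow channels, a compactly supported subsolution built as in~\eqref{Psi0}, or even the time-dependent solution $\bar u$ itself. The monotonicity $\phi'<0$ combined with the $\nu_1$ sign structure will make the barrier compatible with the Neumann condition on the correct part of $\partial K$; Hopf's lemma then rules out tangential contact on $\partial K$, and the strong maximum principle rules out interior contact. Sliding the barrier all the way through $D_0$ yields $\bar v\ge\alpha$ there, contradicting $x_0\in D_0$.

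The main obstacle is executing this sliding rigorously when $\Omega$ contains narrow channels (parallel-blade or small-capacity walls, Theorems~\ref{th:parallel} and~\ref{th:small-capacity}): the planar front $\phi(x_1-s)$ alone fails to satisfy the Neumann condition with the correct sign along the side faces of the channels, so one must instead use a hybrid barrier or approximate $K$ by walls of simpler geometry, combining the sliding with the closedness of the blocking-walls family from Corollary~\ref{cor:blocking} to pass to the limit in the sliding parameter.
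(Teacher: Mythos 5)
Your overall strategy (a sliding comparison exploiting the sign of $\nu_1$ on $\partial K$ from directional convexity) points in the right direction, and you have correctly identified the obstruction --- a single rightward planar front $\phi(x_1-s)$ has $\partial_\nu \phi(x_1-s)=\phi'(x_1-s)\,\nu_1$ with $\phi'<0$, so it violates the Neumann subsolution condition precisely on the downstream face of $K$ where $\nu_1<0$. However, you stop at naming the obstruction and offer only vague escape routes, none of which works: sliding the ball subsolution $\Psi^P$ fails because a ball of radius $R_0$ cannot pass through narrow channels (and in fact the theorem makes no large-hole assumption); ``using $\bar u$ itself as a barrier'' is circular; and Corollary~\ref{cor:blocking}, which says the blocking walls form a closed family under Hausdorff limits, is simply not applicable here, since we already assume propagation and are upgrading it to complete invasion, with no approximation of $K$ anywhere in sight.

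The missing idea --- and the crux of the paper's proof --- is to use a \emph{pincer} subsolution built from \emph{two} opposing fronts rather than a single one. Let $\rho$ be the compactly supported, monotone decreasing strict subsolution from \eqref{rho} (satisfying $\rho''+f(\rho)=\delta$, $\rho\to b>\alpha$ at $-\infty$, $\rho\equiv0$ on $[0,\infty)$), and set
$W^\lambda(x):=\max\{\rho(x_1-\lambda),\,\rho(M-x_1-\lambda)\}$.
This function of $x_1$ alone is decreasing for $x_1<M/2$ and increasing for $x_1>M/2$, so $W^\lambda_{x_1}$ changes sign exactly where $\nu_1$ does on a directionally convex $K$; hence $\partial_\nu W^\lambda=W^\lambda_{x_1}\nu_1\le0$ on all of $\partial K$, and $W^\lambda$ has a favourable derivative gap across $x_1=M/2$. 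It is therefore a genuine subsolution for every $\lambda$, including $\lambda>0$ when its support overlaps $K$. For $\lambda$ very negative its support avoids $K$ and $W^\lambda<\bar v$ by propagation at both ends; the strong maximum principle then preserves $W^\lambda<\bar v$ as $\lambda$ increases through all real values, and letting $\lambda\to+\infty$ gives $\bar v\ge b>\alpha$, whence $\bar v\equiv1$ by a simple ODE comparison. Your contradiction scaffolding (Phragm\'en--Lindel\"of to locate a sub-$\alpha$ pocket, which is itself delicate since the strips $\Omega\cap\{|x_1|\le L\}$ are unbounded in $y$) is an unnecessary detour that the direct sliding makes moot. The substitution of $\rho$ for $\phi$ also matters: $\rho$ has a genuinely compact support and a finite limiting value $b\in(\alpha,1)$, which is exactly what lets the support of $W^\lambda$ stay clear of $K$ for $\lambda<0$ and lets the final comparison close.
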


Essentially the same result is proved in our earlier paper \cite[Theorem 6.4]{BHM1}. Though the paper \cite{BHM1} dealt with the case where $K$ is a compact obstacle, the proof remains the same. The proof of Theorem~\ref{th:complete} will be given in Section~\ref{s:complete}. We think that complete invasion occurs for a much broader class of $K$ than just directional covexity. On the other hand, as  shown in \cite[Theorem 6.5]{BHM1}, complete invasion does not occur if part of $K$ has a reservoir-like shape with narrow entrance (see Figure \ref{fig:complete} (b)). Here we state this result in a somewhat vague manner. A more precise statement of this theorem is given in Theorem~\ref{th:incomplete2} in Section~\ref{s:complete}.

\begin{Thm}[Incomplete invasion]\label{th:incomplete}
Assume that part of $K$ has a reservoir-like configuration as shown in Figure \ref{fig:complete} (b). If the entrance of this reservoir is sufficiently narrow, then complete invasion does not occur. More precisely, the value of $\bar{v}$ remains close to $0$ inside the reservoir, even if propagation occurs. In other words, there are such cases that $0<\bar{v}<1$ on $\Omega$, while $\bar{v}(x)\to 1$ as $x_1\to \pm\infty$.
\end{Thm}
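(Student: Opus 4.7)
My plan is to prove incomplete invasion by constructing a stationary super-solution $W$ of~\eqref{S} on $\Omega$ that stays close to $0$ inside the reservoir, and then invoking parabolic comparison to conclude $\bar v\leq W$ there. Positivity $\bar v>0$ on $\Omega$ follows from the strong maximum principle applied to the elliptic equation in~\eqref{S}, while $\bar v(x)\to 1$ as $x_1\to\pm\infty$ follows from~\eqref{vbar-1} together with Theorem~\ref{th:dichotomy} under the hypothesis that propagation occurs.

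To construct $W$, let $\mathcal{R}\subset\Omega$ denote the bounded reservoir region, whose boundary splits into the narrow interior opening $\Gamma\subset\Omega$ and a wall portion $\partial\mathcal{R}\cap\partial\Omega$ (Neumann). I define $W$ piecewise: set $W\equiv 1$ on $\Omega\setminus\mathcal{R}$ (trivially stationary, since $f(1)=0$); set $W\equiv\eta$ on $\mathcal{R}\setminus\Gamma^\rho$ for some $\eta\in(0,\alpha)$ and a thin tubular neighborhood $\Gamma^\rho\subset\mathcal{R}$ of $\Gamma$ (then $-\Delta\eta=0\geq f(\eta)$ because $f<0$ on $(0,\alpha)$); and on $\Gamma^\rho$ itself, place a transition barrier from $1$ on the outer side to $\eta$ on the inner side, obtained as the minimizer in $H^1(\Gamma^\rho)$ of
\[
J[W]=\int_{\Gamma^\rho}\Bigl(\tfrac12|\nabla W|^2-F(W)\Bigr)dx
\]
with Dirichlet data $W=1$ on $\Gamma$, $W=\eta$ on $\partial\Gamma^\rho\cap\mathcal{R}$, and Neumann on the wall part of $\partial\Gamma^\rho$. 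Its Euler--Lagrange equation gives $-\Delta W=f(W)$ in $\Gamma^\rho$, so $W$ is a stationary solution there. Checking the matching conditions at $\Gamma$ and $\partial\Gamma^\rho\cap\mathcal{R}$ (namely, that the one-sided normal derivatives have the sign consistent with superharmonicity across the interface) shows that $W$ is a global viscosity super-solution on $\Omega$ satisfying the Neumann condition. This barrier construction is modelled on the one of Theorem~\ref{th:blocking-localized}, but localized to the neck.

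For the comparison step, since $\bar u(t,\cdot)\to 0$ uniformly on $\Omega$ as $t\to-\infty$ by~\eqref{ubar-phi}, there is $t_0\in\R$ with $\bar u(t_0,\cdot)\leq W$ on $\overline{\Omega}$. Parabolic comparison with the stationary super-solution $W$ (both satisfying Neumann on $\partial\Omega$) propagates this inequality forward in time, giving $\bar u(t,\cdot)\leq W$ for every $t\geq t_0$, and letting $t\to+\infty$ yields $\bar v\leq W\leq\eta$ on $\mathcal{R}\setminus\Gamma^\rho$. Since $\eta$ can be made arbitrarily small provided $\Gamma$ is sufficiently narrow, this establishes Theorem~\ref{th:incomplete}.

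The main obstacle is the existence and smallness of the neck barrier. Because $f$ is unbalanced bistable with $\int_0^1 f>0$, the $F$-term in $J$ favors the competitor $W\equiv 1$ (whose energy over $\Gamma^\rho$ is $-F(1)|\Gamma^\rho|$), so a careless global minimization does not produce a low-valued minimizer. What makes the low-valued minimizer exist is precisely the narrowness of $\Gamma$: the Dirichlet-energy penalty of a transition across the thin cross-section then suffices to undercut the bulk $F$-gain, and one obtains a minimizer that actually assumes the prescribed value $\eta$ on $\partial\Gamma^\rho\cap\mathcal{R}$. Quantifying this requires an energy comparison between the candidate transition profile and the constant $1$ on $\Gamma^\rho$, together with an $L^\infty$ bound on elliptic minimizers in thin domains. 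This delicate step closely parallels \cite[Theorem~6.5]{BHM1}.
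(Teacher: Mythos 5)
Your overall strategy --- build a stationary super-solution near the reservoir that stays below a small constant $\eta<\alpha$ in the interior, then compare with $\bar u$ and pass to the limit --- is the right one, and it is indeed the strategy of the paper. But the specific gluing you propose does not yield a super-solution, and the step you wave through (``Checking the matching conditions\ldots\ shows that $W$ is a global viscosity super-solution'') is exactly where it breaks.

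Consider the inner interface $\partial\Gamma^\rho\cap\mathcal{R}$. Inside the transition layer $\Gamma^\rho$ your function $W$ decreases from $1$ down to the Dirichlet value $\eta$ on this interface, while on the other side $W\equiv\eta$. So at that interface the one-sided normal derivative from $\Gamma^\rho$ (with $\nu$ pointing into $\mathcal{R}\setminus\Gamma^\rho$) is $\le 0$, and from the other side it is $=0$. This is a \emph{convex} kink, i.e.\ the distributional Laplacian picks up a \emph{nonnegative} singular measure there, which is the wrong sign for $\Delta W+f(W)\le 0$. One cannot repair this by hoping the minimizer is flat near that interface: in any region where $W$ were $\equiv\eta\in(0,\alpha)$ you would have $\Delta W=0\neq -f(\eta)>0$, contradicting the Euler--Lagrange equation, so the minimizer is strictly subharmonic there and cannot match the constant $\eta$ tangentially. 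Hence your $W$ is not a super-solution across $\partial\Gamma^\rho\cap\mathcal{R}$, and the parabolic comparison step does not apply.

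The paper avoids the gluing altogether. It defines the barrier $V$ on the \emph{whole} bounded set $\Omega_{res}$ (entrance $\Omega_0$ together with the interior $\Omega_-$), imposing $V=1$ on the outer mouth $\Gamma$, Neumann on the wall portion, and nothing else. The smallness of $V$ in the interior is not imposed as Dirichlet data; it is enforced by a constrained minimization ($\overline{V}|_{D_j}\le\delta$ on a Lipschitz decomposition $\{D_j\}$ of $\Omega_-$), and the key energy estimate --- comparing the minimizer with a test function $\zeta$ supported in $\Omega_0$ --- shows the constraint is \emph{strict} provided $|\Omega_0|$ is small enough, so $V$ is an unconstrained critical point and genuinely solves the Euler--Lagrange equation in $\Omega_{res}$. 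The comparison with $\bar u$ is then local to $\Omega_{res}$: $V=1>\bar u$ on $\Gamma$ for all times, $V$ is bounded away from $0$ on the compact $\overline{\Omega_{res}}$ so $\bar u(t_0,\cdot)<V$ there for $t_0$ sufficiently negative, and the comparison principle on $\Omega_{res}$ propagates this to all $t$, hence to the limit $\bar v\le V$. There is no interface to match, so no sign-of-kink issue. If you want to rescue your outline you should abandon the piecewise-constant interior and instead let the barrier extend over all of $\Omega_{res}$ as in the paper, with a constraint (rather than Dirichlet data) controlling its interior size.
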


\begin{figure}[h]
\begin{center}
\includegraphics[width=0.6\textwidth]
{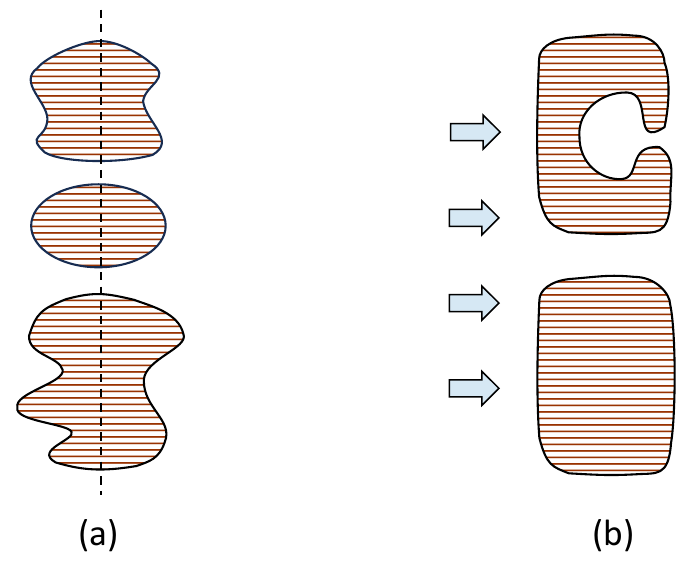}
\end{center}
\vspace{-10pt}
\caption{(a) Directionally convex objects; the dotted line indicates the hyperplane $x_1=a$. (b) A wall that has a reservoir with narrow entrance, which prevents complete invasion.}
\label{fig:complete}
\end{figure}

The above theorem can be proved by constructing an upper barrier around the mouth of the reservoir, in precisely the same style as in the proof of Theorem~\ref{th:blocking-periodic} for blocking. 


\section{Proof of Theorem \ref{th:ubar-phi}}\label{s:ubar-phi}

Here we prove Theorem \ref{th:ubar-phi} concerning the entire solution $\bar{u}$ satisfying \eqref{ubar-phi}. As we mentioned in Introduction, the same result appears in our ealier work \cite[Theorem 2.1]{BHM1}. The statement of \cite[Theorem 2.1]{BHM1} was completely correct, along with the proof of the existence of such an entire solution. However, there was a gap in the proof of the monotonicity of $\overline{u}(x,t)$ in $t$; see Remark~\ref{rk:historical} for details. In this section we give a complete proof of this theorem.  As we shall see, the monotonicity and uniqueness follows easily from the property \eqref{ubar-phi} (Proposition~\ref{prop:uniqueness}).


\subsection{Proof of the existence}\label{ss:existence}

The proof of the existence of $\bar{u}$ satisfying \eqref{ubar-phi} goes exactly along the same line as in the proof of Theorem 2.1 of \cite{BHM1}, except that $\varphi(x_1+ct)$ in \cite{BHM1} is replaced by $\phi(x_1-ct)$ here.  We prove the result under a slightly more general assumption that $f$ is a multistable nonlinearity that simply satisfies the following conditions:
\begin{equation}\label{f-multi}
\begin{cases}
\, f(0)=f(1)=0,\quad f'(0)<0,\ f'(1)<0,\\[2pt]
\, \hbox{there exists a function $\phi$ defined on $\R$ satisfying \eqref{phi} for some $c>0$.}
\end{cases}
\end{equation}
In other words, we assume that the one-dimensional equation $u_t=u_{xx}+f(u)$ possesses a traveling wave of speed $c>0$ connecting $0$ and $1$.

We construct the entire solution $\bar{u}$ as a limit of a sequence of solutions $(u_n)_{n\in\N}$ of the Cauchy problem \eqref{E2} that are defined for $-n\leq t<+\infty$ and are trapped between some sub- and super-solutions. Such an approach is found in \cite{HN1,HN2} for the construction of  new entire solutions of the Fisher-KPP equations, and also in \cite{FMN, GM} for the construction of entire solutions of a bistable reaction-diffusion equation having a pair of mutually annihilating fronts. For the supersolution, we rely in part on a technique of Guo and Morita \cite{GM}

\begin{proof}[Proof of the existence] 
As in~\cite{BHM1}, we introduce some auxiliary notations. Let
\[
\lambda=\frac{c+\sqrt{c^2-4f'(0)}}{2}
\]
be the positive root of the equation $\lambda^2-c\lambda+f'(0)=0$, let
\[
T=\frac{1}{\lambda\,c}\,\log\frac{c}{c+M_1}\ \in(-\infty,0)
\]
with $M_1>0$ being a free parameter to be chosen later, and let
\[
\xi(t)=\frac{1}{\lambda}\,\log\frac{c}{c-M_1\,e^{\lambda\, c\,t}}\,.
\]
The function $\xi$ is well defined in $(-\infty,T]$ and it solves the equation
\[
\xi'(t)=M_1\,e^{\lambda\,(ct+\xi(t))}\ \hbox{ in }(-\infty,T],\quad\hbox{with}\ \ \xi(-\infty)=0.
\]
Notice also that the function $t\mapsto ct+\xi(t)$ is increasing in $(-\infty,T]$ and that
\[
ct+\xi(t)\le cT+\xi(T)=0\ \hbox{ for all }t\le T.\]

Now let 
\[
H=\big\{x\in\R^N \mid x_1<0\big\}=\big\{x\in\Omega \mid x_1<0\big\}
\]
and define two functions $w^-$ and $w^+$ by
\begin{equation*}
w^-(t,x)=\left\{
\begin{array}{ll}\vspace{5pt}
\phi(x_1-ct+\xi(t))-\phi(-x_1-ct+\xi(t)) & \hbox{for }t\le T,\ x\in\overline{H},\\
0 & \hbox{for }t\le T,\ x\in\overline{\Omega}\setminus\overline{H},
\end{array}\right.
\end{equation*}
\begin{equation*}
w^+(t,x)=\left\{
\begin{array}{ll}\vspace{5pt}
\phi(x_1-ct-\xi(t))+\phi(-x_1-ct-\xi(t)) & \hbox{for }t\le T,\ x\in\overline{H},\\
2\,\phi(-ct-\xi(t)) & \hbox{for }t\le T,\ x\in\overline{\Omega}\setminus\overline{H}.
\end{array}\right.
\end{equation*}
Notice that $w^->0$ in $(-\infty,T]\times H$ since $\phi$ is decreasing, while $w^+>0$ in $(-\infty,T]\times\overline{\Omega}$, and that $w^{\pm}$ are both continuous in $(-\infty,T]\times\overline{\Omega}$. Furthermore, both functions $w^\pm$ are of class $C^2$ in $(-\infty,T]\times\big(\overline{\Omega}\!\setminus\!\{x_1=0\}\big)$, $w^+$ is of class~$C^1$ in $(-\infty,T]\times\overline{\Omega}$, and
\begin{equation}\label{w-Neumann}
\nu\cdot\nabla w^{\pm}=0\ \hbox{ on }(-\infty,T]\times\partial\Omega
\end{equation}
since $\partial\Omega\subset\R^N\!\setminus\!\overline{H}$ and $w^{\pm}$ do not depend on $x$ in $(-\infty,T]\times(\overline{\Omega}\!\setminus\!\overline{H})$. Note also that
\begin{equation}\label{w-<w+}
0\leq w^-(t,x)<w^+(t,x) \quad \hbox{on}\ \  (-\infty,T]\times\overline{\Omega} 
\end{equation}
and that
\begin{equation}\label{w-phi}
\lim_{t\to-\infty}\sup_{x\in \overline{\Omega}}|w^\pm(x,t)-\phi(x_1-ct\mp\xi(t))|=0.
\end{equation}

As shown in~\cite[Lemma~2.2]{BHM1}, if $M_1>0$ is chosen sufficiently large, there exists $T'$ with $T'\le T\,(\le 0)$ such that the following inequalities hold, the proof of which is omitted here:
\[
w^+_t\ge\Delta w^++f(w^+)\ \hbox{ in }(-\infty,T']\times\big(\overline{\Omega}\!\setminus\!\{x_1=0\}\big),
\]
\[
w^-_t\le\Delta w^-+f(w^-)\ \hbox{ in }(-\infty,T']\times\big(\overline{\Omega}\!\setminus\!\{x_1=0\}\big).
\]
We fix such $M_1$ and $T'\le0$ in the sequel. Since $w^-$ has a positive derivative gap at $x_1=0$, the above inequality together with \eqref{w-Neumann} implies that $w^-$ is a sub-solution of \eqref{E} in the time range $-\infty<t\leq T'$. Also, since $w^{+}$ has no derivative gap at $x_1=0$, the above inequality and \eqref{w-Neumann} imply that $w^+$ is a super-solution of \eqref{E} in the time range $-\infty<t\leq T'$. 

Now, for $n\ge-T'$, let $u_n(t,x)$ be the solution of \eqref{E} for $t\geq -n$ with initial data
\begin{equation}\label{un}
 u_n(-n,x)=w^-(-n,x).
\end{equation}
By \eqref{w-<w+}, we have $w^-(-n,x)=u_n(-n,x)<w^+(-n,x)$. Since $w^-$ (resp. $w^+$) is a sub- (resp. super-) solution, the comparison principle implies
\begin{equation}\label{w-un-w}
w^-(t,x)\leq u_n(t,x)\leq w^+(t,x)\quad\ \ \hbox{for}\ \ t\in[-n,T'],\ x\in\overline{\Omega}.
\end{equation}
Setting $t=-(n-1)$ in the above inequality yields, for $n\ge-T'+1$,
\[
u_{n}(-n+1,x)\geq w^-(-n+1,x)=u_{n-1}(-n+1,x).
\]
Applying again the comparison principle, we obtain
\begin{equation*}\label{un-monotone}
u_{n}(t,x)\geq u_{n-1}(t,x)\quad\ \ \hbox{for}\ \ t\in[-n+1,T'],\ x\in\overline{\Omega}.
\end{equation*}
Hence the sequence $u_n(t,x)$ is monotone increasing in $n$. Letting $n\to\infty$ and using parabolic estimates, we see that this sequence converges to an entire solution defined for $t\in\R,\,x\in\overline{\Omega}$, which we denote by $\bar u(t,x)$. Letting $n\to+\infty$ in \eqref{w-un-w} gives
\begin{equation*}\label{w-u-w}
w^-(t,x)\leq \bar u(t,x)\leq w^+(t,x)\quad\ \ \hbox{for}\ \
t\in(-\infty,T'],\ x\in\overline{\Omega}.
\end{equation*}
This, together with \eqref{w-phi}, and that fact that $\xi(t)\to 0$ as $t\to-\infty$ show that $\bar{u}$ satisfies \eqref{ubar-phi}. The proof of the existence is complete.
\end{proof}


\subsection{Proof of the uniqueness and monotonicity}\label{ss:comparison}

The uniqueness and time monotonicity of $\bar{u}$ in Theorem \ref{th:ubar-phi} are direct consequences of the property \eqref{ubar-phi}. More precisely, the following proposition holds:

\begin{Prop}\label{prop:uniqueness}
Let $\bar{u}$ be an entire solution of \eqref{E} satisfying \eqref{ubar-phi}. Then $\bar{u}_t>0$ for all $t\in\R$, $x\in\Omega$. Furthermore, there exists only one entire solution of \eqref{E} that satisfy \eqref{ubar-phi}.
\end{Prop}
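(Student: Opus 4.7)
My plan is to deduce both conclusions from a single ``comparison at $-\infty$'' principle.

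\smallskip
\noindent\textbf{Key Lemma.} \emph{Let $U_1,U_2$ be entire solutions of \eqref{E} with $0<U_i<1$, and assume that for some real numbers $s_1\le s_2$,}
\[
\sup_{x\in\overline\Omega}\bigl|U_i(t,x)-\phi(x_1-ct+s_i)\bigr|\longrightarrow 0\quad\text{as }t\to-\infty,\qquad i=1,2.
\]
\emph{Then $U_1\ge U_2$ on $\R\times\overline\Omega$, with strict inequality on $\R\times\Omega$ unless $U_1\equiv U_2$.}

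\smallskip
Granted the lemma, the proposition follows at once. For \emph{uniqueness}, if $\bar u_1,\bar u_2$ both satisfy \eqref{ubar-phi}, applying the lemma with $s_1=s_2=0$ in both orders yields $\bar u_1\equiv\bar u_2$. For \emph{monotonicity}, fix $\sigma>0$ and take $U_1(t,x):=\bar u(t+\sigma,x)$ (which satisfies the hypothesis with $s_1=-c\sigma$) and $U_2(t,x):=\bar u(t,x)$ (with $s_2=0$); since $c,\sigma>0$ we have $s_1<s_2$, so the lemma gives $\bar u(t+\sigma,x)\ge\bar u(t,x)$ on $\R\times\overline\Omega$. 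Dividing by $\sigma$ and letting $\sigma\to 0^+$ yields $\bar u_t\ge 0$, and the strong parabolic maximum principle in $\Omega$ together with Hopf's lemma on $\partial\Omega$, both applied to the linearised equation $z_t=\Delta z+f'(\bar u)z$ satisfied by $z:=\bar u_t$, upgrade this to $\bar u_t>0$ (since $\bar u_t\not\equiv 0$ by \eqref{ubar-phi}).

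To prove the Key Lemma I would use Fife--McLeod-type perturbed travelling waves. With constants $\gamma,\mu>0$ chosen in terms of $\phi$, $f'(0)$ and $f'(1)$, and for any small $q_0>0$, the functions
\[
\overline V^{q_0}_s(t,x)=\phi\bigl(x_1-ct+s-\gamma q_0 e^{\mu t}\bigr)+q_0 e^{\mu t},\quad
\underline V^{q_0}_s(t,x)=\phi\bigl(x_1-ct+s+\gamma q_0 e^{\mu t}\bigr)-q_0 e^{\mu t}
\]
are respectively a super- and a sub-solution of $u_t=\Delta u+f(u)$: the bistable structure $f'(0),f'(1)<0$ absorbs the nonlinear error where $\phi$ is close to $0$ or $1$, while the travelling shift $\gamma q_0 e^{\mu t}$ handles the region where $\phi\in(\alpha_1,1-\alpha_1)$. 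To respect the Neumann condition on $\partial\Omega\subset\{0\le x_1\le M\}$, one symmetrises across $\{x_1=0\}$ exactly as in the construction of $w^\pm$ in Subsection~\ref{ss:existence}, at the cost of an $O\bigl(\phi(-x_1-ct)\bigr)$ correction that is exponentially small as $t\to-\infty$. For any fixed $q_0$, the hypothesis of the lemma lets us pick $t_0$ so negative that $\underline V^{q_0}_{s_1}(t_0,\cdot)\le U_1(t_0,\cdot)$ and $U_2(t_0,\cdot)\le\overline V^{q_0}_{s_2}(t_0,\cdot)$; the forward comparison principle on $[t_0,T_0(q_0)]$ then gives
\[
U_1(t,x)-U_2(t,x)\ \ge\ \phi\bigl(x_1-ct+s_1+\gamma q_0 e^{\mu t}\bigr)-\phi\bigl(x_1-ct+s_2-\gamma q_0 e^{\mu t}\bigr)-2q_0 e^{\mu t},
\]
and sending $q_0\to 0$ yields $U_1-U_2\ge\phi(\cdot+s_1)-\phi(\cdot+s_2)\ge 0$ since $s_1\le s_2$ and $\phi$ is decreasing; strict inequality off $\partial K$ then comes from the strong maximum principle.

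\smallskip
\noindent\textbf{Main obstacle.} The delicate step is matching the initial gap of order $q_0 e^{\mu t_0}$ required by the barriers with the rate at which $\sup_x|U_i(t_0,\cdot)-\phi(\cdot-ct_0+s_i)|$ goes to $0$, since \eqref{ubar-phi} only asserts uniform convergence without a quantitative rate. One needs to pick $q_0=q_0(t_0)$ compatibly---for instance $q_0(t_0)=\epsilon(t_0)e^{-\mu t_0}$ with $\epsilon(t_0)$ the supremum of the two errors at time $t_0$---and verify $q_0(t_0)\to 0$ along a suitable sequence $t_0\to-\infty$, which is possible because $\mu>0$ can be taken arbitrarily small in the Fife--McLeod construction. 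The bistable nonlinearity is essential throughout: a purely linear maximum-principle argument applied to $w:=U_1-U_2$ would fail, since the coefficient $b=\int_0^1 f'(sU_1+(1-s)U_2)\,ds$ in $w_t=\Delta w+bw$ is not signed, and bounded ancient solutions vanishing at $t=-\infty$ need not be identically zero.
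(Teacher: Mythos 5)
Your high-level architecture matches the paper's: both reduce the Proposition to a single ``comparison at $t=-\infty$'' lemma, and the deductions of uniqueness (apply in both orders with $s_1=s_2$) and monotonicity (compare $\bar u(t+\sigma,\cdot)$ with $\bar u(t,\cdot)$) are correct and essentially identical to the paper's use of its Lemma~\ref{lem:comparison-ancient}. Where you diverge is in the proof of the Key Lemma, and there your argument has a genuine gap.

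The gap is in the Fife--McLeod parametrization. You take the perturbation to be $q(t)=q_0\,e^{\mu t}$, so that the initial gap at time $t_0$ is $q_0\,e^{\mu t_0}$. To fit $U_2(t_0,\cdot)\le\overline V^{q_0}_{s_2}(t_0,\cdot)$ you need $q_0\,e^{\mu t_0}\ge\ep(t_0)$, i.e.\ $q_0\ge\ep(t_0)\,e^{-\mu t_0}$. But \eqref{ubar-phi} gives no rate: $\ep(t_0)$ may decay as slowly as you like (say $\ep(t_0)=1/|t_0|$), in which case $\ep(t_0)\,e^{-\mu t_0}\to+\infty$ as $t_0\to-\infty$ for \emph{every} $\mu>0$, and your candidate $q_0(t_0)=\ep(t_0)e^{-\mu t_0}$ blows up rather than going to $0$. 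Making $\mu$ smaller only slows the blow-up; it does not remove it. So the limit $q_0\to0$ that you need at the end is not available. This is fixable: use the standard Fife--McLeod parametrization with \emph{forward-decaying} error $q(t)=q_0\,e^{-\mu(t-t_0)}$ and shift bounded by $\gamma q_0/\mu$, started at $t_0$ with $q_0:=\ep(t_0)$. Then $q_0\to0$ as $t_0\to-\infty$ automatically, the cumulative shift and the residual at any fixed $t$ both vanish, and your final inequality passes to the limit as intended. (One must still check the supersolution inequality for the decaying $q$, which requires $\mu<\min\{|f'(0)|,|f'(1)|\}$ and $\gamma$ large, and one must carry out the symmetrization/Neumann correction carefully; you sketch but do not execute the latter.)

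One further remark: your closing claim that ``a purely linear maximum-principle argument applied to $w:=U_1-U_2$ would fail because the coefficient is not signed'' is, if anything, an argument \emph{for} the paper's route. The paper's Lemma~\ref{lem:comparison-ancient} is precisely such a linear argument, but applied only in the two outer regions $\{x_1\le a(t)\}$ and $\{x_1\ge b(t)\}$ where both solutions lie in $[1-\delta_0,1]$ or $[0,\delta_0]$, so that the coefficient $h(t,x)=\int_0^1 f'(su+(1-s)\widetilde u)\,ds$ \emph{is} signed ($\le-\sigma$) wherever $w\ge0$. An exponential barrier $\delta_0 e^{-\sigma(t-T_1)}$ then kills $w$ in the limit $T_1\to-\infty$ with no quantitative input from~\eqref{ubar-phi}. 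This is why the paper's route avoids exactly the rate-of-convergence obstacle you flag, and it is both shorter and more robust than the Fife--McLeod construction.
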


This above proposition follows from Lemma \ref{lem:comparison-ancient} below. This lemma will also play an important role in the proof of Proposition \ref{prop:minimal} (the minimality of $\bar{v}$) and Theorem~\ref{th:large-holes}.

Before stating the lemma, we introduce some notation. Let $\delta_0\in (0,\frac12)$ be such that
\begin{equation}\label{delta}
f'(s)<0 \quad \hbox{for} \ \ s\in[0,\delta_0]\cup[1-\delta_0,1]. 
\end{equation}
As before, we use the notation $x=(x_1,x_2,\ldots,x_N)=(x_1,y)$, where $y=(x_2,\ldots,x_N)$.

\begin{Lemma}[comparison of ancient solutions]\label{lem:comparison-ancient}
Let $u(t,x), \widetilde{u}(t,x)$ be solutions of \eqref{E} defined on $(-\infty, T]\times\overline{\Omega}$ for some $T\in\R$ and satisfying $0\leq u\leq 1,\, 0\leq \widetilde{u}\leq 1$. 
 \begin{itemize}
\item[{\rm (i)}] 
Assume that there exist a smooth function $ a(t)$ such that
\[
u(t, x)<\widetilde{u}(t, x)\quad ({}\forall t\in (-\infty,T], \,x\in \Omega\cap\{x_1 = a(t)\}),
\]
\[
1-\delta_0\leq \widetilde{u}(t,x)\leq 1 \ \ \left({}\forall t\in (-\infty,T], \, x\in \Omega\cap\{x_1\leq  a(t)\}\right),
\]
Then
\[
u(t,x)<\widetilde{u}(t,x) \quad \hbox{for all}\ \ t\in (-\infty,T], \, x\in \Omega\cap\{x_1\leq  a(t)\}.
\]
\item[{\rm (ii)}] 
Assume that there exist a smooth function $b(t)$ such that
\[
u(t, x)<\widetilde{u}(t, x)\quad \left({}\forall t\in (-\infty,T], \,x\in \Omega\cap\{x_1=  b(t)\}\right),
\]
\[
0\leq u(t,x)\leq \delta_0 \ \ \left({}\forall t\in (-\infty,T],\,  x\in \Omega\cap\{x_1\geq  b(t)\}\right).
\]
Then
\[
u(t,x)<\widetilde{u}(t,x) \quad \hbox{for all}\ \ t\in (-\infty,T], \, x\in \Omega\cap\{x_1\geq  b(t)\}.
\]
\end{itemize}
\end{Lemma}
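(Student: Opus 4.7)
\noindent The plan is to prove both parts by a common sliding argument; I would handle part~(i) in detail, and part~(ii) would follow by a symmetric argument that swaps the roles of the equilibria $0$ and $1$ and uses $f'<0$ on $[0,\delta_0]$ in place of $[1-\delta_0,1]$, the touching-point values now landing in the former interval thanks to the bound $u\le\delta_0$ in the hypothesis.

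On the parabolic region $R:=\{(t,x):t\le T,\;x\in\overline{\Omega},\;x_1\le a(t)\}$ I would introduce the minimal sliding constant
\[
\sigma^*:=\inf\big\{\sigma\ge0:\;u\le\widetilde u+\sigma\text{ on }R\big\},
\]
which satisfies $\sigma^*\le\delta_0$ because $u\le 1$ and $\widetilde u\ge1-\delta_0$ on $R$. The goal would be first to prove $\sigma^*=0$, so that $u\le\widetilde u$ on $R$, and then to upgrade this to a strict inequality. Suppose for contradiction that $\sigma^*>0$ and put $w:=\widetilde u+\sigma^*-u\ge0$, so that $\inf_R w=0$. The strict hypothesis $u<\widetilde u$ on $\{x_1=a(t)\}$ yields $w>\sigma^*$ on that lateral face, so the infimum is not approached there. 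If a minimizing sequence $(t_n,x_n)$ admits a subsequential limit $(t_*,x_*)\in R$, then $u(t_*,x_*)=\widetilde u(t_*,x_*)+\sigma^*\le 1$, which forces both values into $[1-\delta_0,1]$; since $f$ is strictly decreasing on that interval, $f(\widetilde u)-f(u)>0$ at $(t_*,x_*)$. But $w$ solves $w_t-\Delta w=f(\widetilde u)-f(u)$, so an interior minimum would force $w_t-\Delta w\le0$---a contradiction---while a boundary minimum with $x_*\in\partial\Omega$ would be ruled out by parabolic Hopf together with the Neumann condition $\partial w/\partial\nu=0$.

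The main obstacle will be what to do when no minimizing sequence converges in $R$, i.e.\ when $t_n\to-\infty$ or $|y_n|\to\infty$ (writing $x_n=(x_{1,n},y_n)$). My plan would then be to translate: setting $z_n:=(x_{1,n},y_n)$ and
\[
u_n(t,x):=u(t+t_n,x+z_n),\qquad \widetilde u_n(t,x):=\widetilde u(t+t_n,x+z_n),
\]
one obtains solutions of the same equation on $\Omega_n:=\Omega-z_n$ with Neumann boundary condition. Using the uniform smoothness of $\partial\Omega$ together with interior and boundary parabolic Schauder estimates, I would extract a subsequence along which $\Omega_n$ converges locally in Hausdorff distance to a uniformly smooth domain $\Omega_\infty$ and $(u_n,\widetilde u_n)\to(u_\infty,\widetilde u_\infty)$ in $C^{2,1}_{\mathrm{loc}}$. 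In the limit, $w_\infty:=\widetilde u_\infty+\sigma^*-u_\infty\ge0$, $w_\infty(0,0)=0$, and the pointwise bound $\widetilde u_\infty(0,0)\ge 1-\delta_0$ is preserved; moreover $w_\infty(0,0)=0<\sigma^*$ forces the origin to lie strictly inside the translated parabolic region, so the interior/Hopf argument of the previous paragraph applies to $w_\infty$ at $(0,0)$ and again gives a contradiction. Hence $\sigma^*=0$.

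Having established $u\le\widetilde u$ on $R$, the final step would be to upgrade to strict inequality. The difference $w:=\widetilde u-u\ge0$ satisfies a linear parabolic equation $w_t-\Delta w=c(t,x)\,w$ with bounded coefficient $c:=\int_0^1 f'\!\big(u+s(\widetilde u-u)\big)\,ds$; since $w>0$ on the lateral face $\{x_1=a(t)\}$ by hypothesis, the parabolic strong maximum principle (together with Hopf on $\partial\Omega$ via the Neumann condition) and a standard propagation argument rule out any interior zero of $w$. This yields $u<\widetilde u$ throughout $R$ and completes the plan for part~(i).
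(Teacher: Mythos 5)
Your argument is correct, but it follows a genuinely different route from the paper's. The paper works with $w:=u-\widetilde u$, observes that $w_t\le\Delta w-\sigma w$ wherever $w\ge0$ (since there both $u$ and $\widetilde u$ lie in $[1-\delta_0,1]$ and $f'\le-\sigma$ on that interval), and then compares $w$ against the explicit barrier $\eta(t,x)=\delta_0 e^{-\sigma(t-T_1)}$ on $[T_1,T]$, letting $T_1\to-\infty$ to conclude $w\le0$ and finishing with the strong maximum principle. This is a purely pointwise, self-contained comparison: it never needs to produce a minimum point and hence requires no compactness machinery. You instead run a sliding argument in $\sigma$, argue that $\sigma^*>0$ would force a zero of $\widetilde u+\sigma^*-u$, and, because the parabolic region $R$ is unbounded in both $t$ and $x$, you must handle the non-attained infimum via translation and $C^{2,1}_{\rm loc}$ compactness (using the uniform smoothness of $\partial\Omega$). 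That is the price of the sliding route: it is robust and standard, but heavier than the paper's barrier. One detail worth tightening is your invocation of ``parabolic Hopf'' at a boundary contact point. The function $w:=\widetilde u+\sigma^*-u$ satisfies the \emph{inhomogeneous} equation $w_t-\Delta w-c\,w=-c\,\sigma^*$ with $c=\int_0^1 f'(u+s(\widetilde u-u))\,ds<0$ near the contact, so the right-hand side is strictly positive there and the classical Hopf hypotheses ($Lw\le0$) are not met. The boundary contact is nonetheless excluded: the Neumann condition gives $\partial_\nu w=0$, and since $w\ge0=w(P)$ with $\partial_\nu w(P)=0$ one has $\partial_\nu^2 w(P)\ge0$, while the tangential Laplacian of $w|_{\partial\Omega}$ is $\ge0$ at the minimum, whence $\Delta w(P)\ge0$; combined with $w_t(P)\le0$ this gives $(w_t-\Delta w)(P)\le0$, contradicting $f(\widetilde u)-f(u)>0$. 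So the conclusion stands, but via the direct Neumann second-order test rather than a literal Hopf lemma.
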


\begin{proof}
Since $f'<0$ on the compact set $[0,\delta_0]\cup[1-\delta_0,1]$, there exists  $\sigma>0$ such that
\[
f'(s)\leq -\sigma \quad \hbox{for} \ \ s\in[0,\delta_0]\cup[1-\delta_0,1]. 
\]
We first prove (i). Let $w(t,x):=u(t,x)-\widetilde{u}(t,x)$.  Then $w$ satisfies
\[
w_t=\Delta w+h(t,x)w \quad (t\in (-\infty,T],\, x\in \Omega\cap\{x_1\leq  a(t)\}),
\]
where
\[
h(t,x)=\int_0^1 f'\left(su(t,x)+(1-s) \widetilde{u}(t,x)\right) ds,
\]
along with the boundary condition
\begin{equation}\label{w(alpha)}
w(t, x)<0\quad (t\in(-\infty,T],\ x\in \Omega\cap\{x_1= a(t)\}).
\end{equation}
Also, since $1-\delta_0\leq \widetilde u\leq 1$ and $u\leq 1$, we have $w\leq \delta_0$. 
It suffices to show that $w<0$.

Suppose that $w(t,x)\geq 0$ for some $t\in(-\infty,T]$, $x\in \Omega\cap\{x_1\leq  a(t)\})$. Then we have $1-\delta_0\leq \widetilde{u}(t,x)\leq u(t,x)\leq 1$, which implies $h(t,x)\leq -\sigma$.  It follows that
\begin{equation}\label{w-sub}
w_t\leq\Delta w -\sigma w \quad \hbox{wherever}\ \ w\geq 0.
\end{equation}
Now we choose $T_1<T$ arbitrarily and define a function $\eta(t,x)=\delta_0 e^{-\sigma(t-T_1)}$. Then
\[
\eta_t=-\sigma \eta=\Delta\eta -\sigma \eta,\quad (t\in [T_1,T],\, x\in \Omega\cap\{x_1\leq  a(t)\}).
\]
Note also that
\[
\begin{split}
& \eta(T_1,x)=\delta_0 \geq w(T_1,x)\quad  (x\in \Omega\cap\{x_1\leq a(T_1)\}),\\
& \eta(t, a(t),y)>0>w(t, a(t),y)\ \ (t\in[T_1,T],\ (a(t),y)\in\Omega),\\
& \frac{\partial\eta}{\partial\nu}(t,x)=\frac{\partial w}{\partial\nu}(t,x)=0\quad (t\in[T_1,T],\ x\in\partial\Omega\cap\{x_1\le a(t)\}).
\end{split}
\]
Thus, in view of \eqref{w-sub}, $\eta$ acts as an upper barrier for $w$. Consequently,
\[
w(t,x)\leq \eta(t,x)=\delta_0 e^{-\sigma(t-T_1)}\quad (t\in [T_1,T],\, x\in \Omega\cap\{x_1\leq  a(t)\}).
\]
Recall that $T_1\in (-\infty,T)$ is arbitrary. Letting $T_1\to -\infty$, we obtain
\[
w(t,x)\leq 0 \quad \hbox{for all}\ \ t\in (-\infty,T],\, x\in \Omega\cap\{x_1\leq  a(t)\}.
\]
Since $w$ is not identically $0$ by virtue of \eqref{w(alpha)} ($\Omega\cap\{x_1=\xi\}\neq\emptyset$ for every $\xi\in\R$), the strong maximum principle implies $w<0$. The statement (i) is proved.

The statement (ii) is proved in the same manner, by simply replacing the region $\Omega\cap\{x_1\leq a(t)\}$ by $\Omega\cap\{x_1\geq  b(t)\}$. All we have to show is that $w=u-\widetilde{u}<0$ for $x_1\geq  b(t)$. Since $0\leq u\leq \delta_0$ and $0\leq \widetilde{u}\leq 1$, we have $0\leq \widetilde{u}\leq u\leq \delta_0$ whenever $w\geq 0$, which implies~\eqref{w-sub}. The conclusion $w<0$ then follows by arguing as above. The proof of Lemma~\ref{lem:comparison-ancient} is complete.
\end{proof}

Now we are ready to prove Proposition~\ref{prop:uniqueness}.

\begin{proof}[Proof of Propostion~\ref{prop:uniqueness}] 
We begin with the proof of $\bar{u}_t>0$. Let $\delta_0>0$ be as in Lemma~\ref{lem:comparison-ancient}. We recall that $\phi$ satisfies the condition \eqref{phi(0)}, that is, $\phi(0)=\alpha$. Let $L>0$ be such that
\[
1-\frac{\delta_0}{2}\leq \phi(z)<1 \ \ \hbox{for}\ \ z\in(-\infty, -L],\quad 
0<\phi(z)\leq \frac{\delta_0}{2}\ \ \hbox{for}\ \ z\in [L,+\infty)
\]
and define $a(t)=ct-L$, $b(t)=ct+L$. Then
\[
1-\frac{\delta_0}{2}\leq\phi(x_1-ct)< 1\ \ \hbox{if}\ \ x_1\leq a(t),\quad 
 0<\phi(x_1-ct)\leq \frac{\delta_0}{2}\ \ \hbox{if}\ \ x_1\geq b(t).
\]
Now let $\tau>0$ be a constant. Then since $\phi$ is monotone decreasing, we have
\[
\phi(x_1-c(t+\tau))-\phi(x_1-ct)>0.
\]
Since $\bar{u}(t,x)$ converges to $\phi(x_1-ct)$ and $\bar{u}(t+\tau,x)$ to $\phi(x_1-c(t+\tau))$ as $t\to-\infty$ uniformly on $\overline{\Omega}$, we see that, for any sufficiently small $\tau>0$, there exists $T<0$ such that
\begin{equation}\label{ubar-ubar-a}
1-\delta_0 \leq \bar{u}(t,x),\,\bar{u}(t+\tau,x)<1\quad \hbox{for}\ \ t\in(-\infty,T],\, x\in \overline{\Omega}\cap\{x_1\leq a(t)\},
\end{equation}
\begin{equation}\label{ubar-ubar-b}
0< \bar{u}(t,x),\,\bar{u}(t+\tau,x)\leq\delta_0 \quad \hbox{for}\ \ t\in(-\infty,T],\, x\in \overline{\Omega}\cap\{x_1\geq b(t)\},
\end{equation}
\begin{equation}\label{ubar-ubar-ab}
\bar{u}(t+\tau,x)-\bar{u}(t,x)>0 \quad \hbox{for}\ \ t\in(-\infty,T],\, x\in \overline{\Omega}\cap\{a(t)\leq x_1\leq b(t)\}.
\end{equation}
Combining \eqref{ubar-ubar-a}, \eqref{ubar-ubar-b}, $\bar{u}(t+\tau,a(t))-\bar{u}(t,a(t))>0$, $\bar{u}(t+\tau,b(t))-\bar{u}(t,b(t))>0$ and applying Lemma~\ref{lem:comparison-ancient} with $\widetilde{u}(t,x):=\bar{u}(t+\tau,x)$ and $u(t,x):=\bar{u}(t,x)$, we see that $\bar{u}(t+\tau,x)-\bar{u}(t,x)>0$ if $t\leq T$ and if $x_1\leq a(t)$ or $x_1\geq b(t)$. This, together with \eqref{ubar-ubar-ab}, imply
\[
\bar{u}(t+\tau,x)-\bar{u}(t,x)>0\quad\ \hbox{for}\ \ t\in(-\infty,T],\,x\in\overline{\Omega}. 
\]
By the comparison principle, the same inequality holds for $t\geq T$. Therefore $\bar{u}(t+\tau,x)-\bar{u}(t,x)>0$ for all $t\in\R$ and $x\in\overline{\Omega}$. Consequently
\[
\bar{u}_t(t,x)=\lim_{\tau\to+0}\frac{\bar{u}(t+\tau,x)-\bar{u}(t,x)}{\tau}\geq 0.
\]
Since $\bar{u}_t$ is not identically $0$, the strong maximum principle implies $\bar{u}_t>0$.

Next we prove the uniqueness. Suppose that $\widehat{u}$ satisfies 
\[
\lim_{t\to-\infty}\sup_{x\in\Omega} |\widehat{u}(t,x)-\phi(x_1-ct)|=0. 
\]
Then, by setting $\widetilde{u}(t,x)=\widehat{u}(t+\tau,x)$, $u(t,x)=\bar{u}(t,x)$ and applying Lemma~\ref{lem:comparison-ancient} as above, we see that $\widehat{u}(t+\tau,x)>\bar{u}(t,x)$ on $\R\times\overline{\Omega}$ for all sufficiently small $\tau>0$. Letting $\tau\to 0$, we obtain $\widehat{u}\geq \bar{u}$. Similarly, by setting $\widetilde{u}(t,x)=\bar{u}(t+\tau,x)$, $u(t,x)=\widehat{u}(t,x)$, we obtain $\bar{u}\geq\widehat{u}$. Thus we have $\widehat{u}=\bar{u}$ and the proof of the uniqueness is complete.
\end{proof}

\begin{Rk}
\label{rk:historical}
As mentioned earlier,
Theorem~\ref{th:ubar-phi} of the present paper states the same result as Theorem 2.1 of our earlier paper \cite{BHM1}. However, there was a gap there in the part concerned with the proof of the monotonicity of $\bar{u}$ in $t$ which was pointed out to us by S.~Eberle. The gap in \cite{BHM1} lies in the  claim that the subsolution $w^-(t,x)$ used in the construction of the entire solution $\bar{u}$ is monotone increasing in $t$. This is not true and we cannot infer from it, as in \cite{BHM1}, that the function $u_n$ defined by \eqref{un} satisfies $(u_n)_t>0$.

Yet, the monotonicity property of the limit function $\bar{u}$ holds true. There are different ways to fill the gap in proving it. The quickest way is to simply modify the definition of $u_n$ in \eqref{un}, i.e. $u_n(-n,x)=w^-(t,x)$, replacing it with
\begin{equation}\label{un-2}
u_n(-n,x)=\sup_{s\leq -n} w^-(s,x).
\end{equation}
Then it is easily seen that $u_n$ satisfies $(u_n)_t>0$ and $w^-\leq u_n\leq w^+$ as desired. The key-point of this corrected proof was indicated privately by one of the authors to S.~Eberle and was used in his paper \cite{E} for the construction of front-like entire solutions of some heterogeneous bistable reaction-diffusion equations in straight infinite cylinders. As a matter of fact, the same idea of defining $u_n$ as in \eqref{un-2} to construct a monotone increasing entire solution is also found in the proof of \cite[Theorem 5]{M1} for constructing a monotone increasing orbit emanating from an unstable equilibrium point in an order-preserving dynamical system. 

In the present paper, we have taken a different and new approach, which is to keep the existence proof in \cite{BHM1} as it is and to derive the monotonicity of $\bar{u}$ directly from the property~\eqref{ubar-phi}. This way, we can prove that monotonicity holds in general as a consequence of property~\eqref{ubar-phi}. Another advantage of this approach is that, once Lemma~\ref{lem:comparison-ancient} is established, the uniqueness and the monotonicity can be derived simultaneously. Furthermore, Lemma~\ref{lem:comparison-ancient} turns out to be a powerful tool. In fact,  it will also play an important role here in the proof of the minimality of $\bar{v}$ in Proposition~\ref{prop:minimal} as well as for the blocking results of Section~\ref{s:blocking}.

The method of proof we introduce here is of independent interest and could be applied in other situations as well.
\end{Rk}


\section{Proof of the dichotomy theorem}\label{s:dichotomy-proof}

In this section we prove Theorem~\ref{th:dichotomy} (dichotomy theorem) and Theorem~\ref{th:general} (universality of the limit profile $\bar{v}$). Throughout this section, we only assume that $K$ satisfies \eqref{K} (finite thickness) besides the smoothness of $\partial K$.


\subsection{A Liouville type result}

Before proving Theorem~\ref{th:dichotomy}, we recall a recent result by Liu {\it et al} \cite{LWWW} on a Liouville type theorem. Let $v$ be a solution of the following equation on the entire space $\R^N$:
\begin{equation}\label{semil}
\Delta v + g(v)=0 \ \ \ \hbox{in} \ \ \R^N
\end{equation}
where $g:\R\to\R$ is a $C^1$ function whose zeros are all isolated. In order to define the stability of the solution $v$, we consider linearized eigenvalue problems of the following form in balls~$B_{R}$ of radius $R$ with the Dirichlet boundary conditions:
\be\label{linearized}
\left\{
\begin{array}{ll}
- \Delta \phi -  g'(v) \phi = \lambda_{R} \phi , \ \ &\text{with} \ \ \phi>0 \ \ \text{in} \ \ B_{R} , \\[4pt]
\phi = 0 \ \ &\text{on} \ \ \partial B_{R}.
\end{array}\right. 
\ee

\begin{Def}\label{def:stability}
We say that a solution $v$ of \eqref{semil} is stable if the principal eigenvalue $\lambda= \lambda_{R}$ of the problem \eqref{linearized} satisfies $\lambda_{R} \geq 0$ for all $R>0$.
\end{Def}

Notice that this notion of stability is defined in a weak sense as it allows $\lambda_R$ to be $0$.  

\begin{Th}\label{th:Liouville}{\rm{(\cite[Theorem 1.4]{LWWW})}}
Let $v$ be a bounded solution of 
\[
\Delta v + g(v)=0 \ \ \ \hbox{in} \ \ \R^N
\]
that is stable in the sense of Definition~\ref{def:stability}. Assume that the one-dimensional equation
\[
w''+g(w)=0 \quad\hbox{in}\ \ \R
\]
does not have any nonconstant bounded stable solution. Then $v$ is a constant.
\end{Th}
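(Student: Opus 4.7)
The plan is to exploit the global stability inequality that follows from $\lambda_R\geq 0$ on every ball, combine it with the equations satisfied by the partial derivatives of $v$, and ultimately reduce to the hypothesis on one-dimensional stable solutions. First, I would translate Definition~\ref{def:stability} into an integral form: since $\lambda_R\geq 0$ on every ball $B_R$, a monotone limit gives the global stability inequality
\[
\int_{\R^N} g'(v)\,\xi^2\,dx \;\leq\; \int_{\R^N} |\nabla \xi|^2\,dx \qquad \forall\, \xi\in C^1_c(\R^N).
\]
Standard interior elliptic regularity applied to the bounded solution $v$ of a semilinear equation with $C^1$ nonlinearity gives $v\in C^{2,\alpha}_{loc}$ with $\|\nabla v\|_{L^\infty}+\|D^2v\|_{L^\infty}<\infty$. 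Differentiating the PDE shows that each $v_k=\partial_{x_k}v$ is a bounded solution of the linearized equation $\Delta v_k+g'(v)v_k=0$ in $\R^N$.

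Next, I would insert $\xi=\eta\,v_k$ in the global stability inequality for a Lipschitz cutoff $\eta$, integrate by parts using the linearized equation, sum over $k$, and rearrange to arrive at a Sternberg--Zumbrun geometric Poincar\'e inequality of the form
\[
\int_{\{|\nabla v|>0\}} \bigl(|\nabla v|^2 |A|^2 + |\nabla_T |\nabla v||^2\bigr)\,\eta^2\,dx \;\leq\; \int_{\R^N} |\nabla v|^2 |\nabla \eta|^2\,dx,
\]
where $|A|^2$ is the sum of squared principal curvatures of the level sets of $v$ and $\nabla_T$ is the tangential gradient along them. A logarithmic cutoff \`a la Farina supported in $B_{R^2}\setminus B_R$, combined with the uniform bound on $|\nabla v|$, drives the right-hand side to $0$ as $R\to\infty$ in every dimension~$N$. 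Hence $|A|\equiv 0$ and $|\nabla v|$ is constant along each regular level set: the level sets must be affine hyperplanes, and $v$ must be one-dimensional, i.e. $v(x)=V(x\cdot e)$ for some unit vector $e\in\R^N$ and some $V\in C^2(\R)$.

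To finish, the profile $V$ is bounded and solves $V''+g(V)=0$ on $\R$. The stability of $v$ in $\R^N$ transfers to stability of $V$ in $\R$ because any admissible 1D test function, tensored with a smooth compactly supported cutoff in the transverse variables and suitably normalized, yields an admissible $N$-dimensional test function for the stability inequality; a limiting argument removes the cutoff. The standing hypothesis then forces $V$ to be constant, so $v$ is constant, as required.

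The main obstacle will be the rigidity step: establishing the geometric Poincar\'e inequality cleanly and, above all, passing from $|A|\equiv 0$ and flatness of regular level sets to genuine one-dimensional dependence of $v$ on the whole space. One must carefully handle the closed critical set $\{\nabla v=0\}$ and the connectedness of level sets, and ensure the Farina cutoff estimate is dimension-free in our setting; this is the technical core of the Sternberg--Zumbrun/Farina approach and is what makes the hypothesis on 1D solutions directly applicable.
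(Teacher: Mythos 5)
This theorem is an external citation (Theorem~1.4 of \cite{LWWW}); the present paper does not supply a proof of it, so there is no internal argument to compare against. Your proposal is therefore an attempt to reprove the cited result from scratch, and it contains a genuine gap.

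The crucial false step is the claim that the logarithmic Farina cutoff ``drives the right-hand side to $0$ as $R\to\infty$ in every dimension $N$''. With the standard choice $\eta=1$ on $B_R$, $\eta=0$ off $B_{R^2}$, and $\eta(x)=2-\log|x|/\log R$ in between, one has $|\nabla\eta(x)|\le\frac{1}{|x|\log R}$, and then, using only $|\nabla v|\le C$,
\[
\int_{\R^N}|\nabla v|^2|\nabla\eta|^2\,dx\ \le\ \frac{C}{(\log R)^2}\int_{B_{R^2}\setminus B_R}\frac{dx}{|x|^2}\ \sim\ \frac{C\,R^{2(N-2)}}{(\log R)^2}.
\]
This tends to $0$ only for $N=2$; for $N\ge 3$ it diverges. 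Thus the Sternberg--Zumbrun geometric Poincar\'e inequality plus a Farina cutoff, using only boundedness of $v$ and of $\nabla v$, cannot by itself give one-dimensionality of stable solutions in general dimension (this is precisely the dimension barrier familiar from the De Giorgi problem). The hypothesis that the one-dimensional ODE has no nonconstant bounded stable solutions cannot be invoked only in the final step as a black box; it must enter the rigidity argument itself. In \cite{LWWW} the conclusion does hold in all dimensions, but their proof is built around a different mechanism (a structure/limiting argument that exploits stability together with the one-dimensional nonexistence hypothesis throughout, not a cutoff energy estimate on the second fundamental form). As it stands, your outline proves the statement only for $N=2$; for $N\ge3$ the rigidity step, which you yourself flag as the technical core, is not merely hard but fails under the stated ingredients.
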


Since our nonlinearity $f$ (extended by $f(s)=f'(0)s$ for $s<0$ and $f(s)=f'(1)(s-1)$ for $s>1$) clearly satisfies the assumption of the above theorem, we have the following corollary:

\begin{Cor}\label{cor:Liouville}
Let $f$ be as in \eqref{f} and let $v$ be a bounded solution of 
\begin{equation}\label{E-RN}
\Delta v+f(v)=0\quad \hbox{in}\ \ \R^N
\end{equation}
that is stable in the sense of Definition~\ref{def:stability} with $g=f$. Then either $v=0$ or $v=1$.
\end{Cor}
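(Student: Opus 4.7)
The plan is to invoke Theorem~\ref{th:Liouville} with $g=f$ and then use the stability hypothesis once more to eliminate the middle zero $\alpha$. First, note that extending $f$ by $f(s)=f'(0)s$ for $s<0$ and $f(s)=f'(1)(s-1)$ for $s>1$ produces a $C^1$ function on $\R$ whose zero set is exactly $\{0,\alpha,1\}$, so in particular the isolated-zeros hypothesis of Theorem~\ref{th:Liouville} is satisfied.

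The only substantive hypothesis to verify is that the one-dimensional equation $w''+f(w)=0$ has no nonconstant bounded stable solution. I would check this by a standard phase-plane analysis of this Hamiltonian ODE, whose energy is $E(w,w')=\tfrac{1}{2}(w')^2+F(w)$ with $F(s)=\int_0^s f(\sigma)\,d\sigma$. Since $F''(\alpha)=f'(\alpha)>0$ while $F''(0)=f'(0)<0$ and $F''(1)=f'(1)<0$, the middle zero $\alpha$ is a local minimum of $F$ (a center), while $0$ and $1$ are local maxima of $F$ (saddles); moreover, the two saddles lie at different energy levels since $F(1)=\int_0^1 f(s)\,ds>0=F(0)$, so no heteroclinic connection exists. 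Hence every bounded nonconstant solution $w$ must be a periodic orbit encircling $\alpha$. For any such $w$, differentiating the ODE once gives $Lw'=0$ with $L=-\partial_z^2-f'(w)$, and since $w'$ is sign-changing, Sturm--Liouville theory yields that the principal Dirichlet eigenvalue of $L$ on a sufficiently long interval is strictly negative; hence $w$ is unstable in the sense of Definition~\ref{def:stability}.

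With this in hand, Theorem~\ref{th:Liouville} forces $v$ to be constant, so $v\in\{0,\alpha,1\}$. To rule out $v\equiv\alpha$, I would note that the principal Dirichlet eigenvalue of $-\Delta-f'(\alpha)$ on $B_R$ equals $\mu_R-f'(\alpha)$, where $\mu_R$ is the first Dirichlet eigenvalue of $-\Delta$ on $B_R$; since $\mu_R=O(R^{-2})\to 0$ as $R\to\infty$ while $f'(\alpha)>0$, this principal eigenvalue becomes strictly negative for all sufficiently large $R$, contradicting the stability of $v$. Therefore $v\equiv 0$ or $v\equiv 1$. The only mildly delicate step is the 1D instability claim, but the Hamiltonian structure together with the Sturm--Liouville observation makes it routine for the bistable class considered here.
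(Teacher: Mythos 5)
Your proof takes the same route as the paper: invoke Theorem~\ref{th:Liouville} to conclude that $v$ is one of the constants $0,\alpha,1$, then use the stability hypothesis together with the first Dirichlet eigenvalue of $-\Delta-f'(\alpha)$ on $B_R$ to exclude $\alpha$. The paper's own proof does exactly this and merely asserts, just before the corollary, that the 1D hypothesis of Theorem~\ref{th:Liouville} is ``clearly'' satisfied by the extended $f$; you spell out that verification, which is a worthwhile addition. There is one inaccuracy in it: it is not true that every bounded nonconstant solution of $w''+f(w)=0$ is a periodic orbit around $\alpha$. Since $F(\alpha)<0<F(1)$, there is a unique $\beta\in(\alpha,1)$ with $F(\beta)=0$, and the energy level $\tfrac12(w')^2+F(w)=0$ carries a homoclinic orbit to the saddle $0$ reaching out to $w=\beta$ and back; this orbit is bounded and nonconstant, so your enumeration misses it, and ruling out only heteroclinics does not suffice. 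The omission is harmless for the conclusion, because the instability mechanism you invoke --- $w'$ solves the linearized equation $Lw'=0$, $w'$ changes sign, hence two consecutive zeros of $w'$ give a subinterval on which the principal Dirichlet eigenvalue is $0$, and strict domain monotonicity then forces $\lambda_R<0$ once $(-R,R)$ strictly contains that subinterval --- applies just as well to the homoclinic orbit, whose derivative changes sign once. You should simply state explicitly that this argument covers the homoclinic orbit as well as the periodic ones.
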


\begin{proof}
By Theorem~\ref{th:Liouville}, $v$ is a constant. Therefore $v$ is either $0$ or $\alpha$ or $1$. By the stability assumption, $v=\alpha$ is excluded.
\end{proof}


\subsection{Proof of Theorem \ref{th:dichotomy}}\label{ss:proof-dichotomy}

\begin{proof}[Proof of Theorem \ref{th:dichotomy}]
Suppose that \eqref{vbar-ep} does not hold for some $\ep=\ep_0>0$.  Then there exists a sequence of walls 
\[
K_j\subset\{x\in\R^N\mid 0\leq x_1\leq M\}\ \  (j=1,2,3,\ldots)
\]
and a sequence of real numbers $b_j\to \infty$ such that the limit profile $v_j$ corresponding to the wall $K_j$ satisfies
\[
\ep_0 < v_j(b_j,y_j) < 1-\ep_0 \quad (j=1,2,3,\ldots)
\]
for some $y_j\in\R^{N-1}$. Define a function $w_j(x)=w_j(x_1,y)$ on $\{(x_1,y)\mid M-b_j< x_1<\infty,y\in\R^{N-1}\}$ by $w_j(x_1,y)=v_j(x+b_j,y+y_j)$.  Then $w_j$ satisfies
\[
\Delta w_j + f(w_j)=0 \ \ \left((x_1,y)\in(M-b_j,\infty)\times\R^{N-1}\right), \quad 
\ep_0<w_j(0,0)<1-\ep_0.
\]
Since $w_1, w_2, w_3,\ldots$ are uniformly bounded, we can choose a subsequence of $(w_j)_{j\in\N}$ that converges locally uniformly in the $C^2$ sense to a function $w_\infty$ on $\R^N$ satisfying
\[
\Delta w_\infty + f(w_\infty)=0\ \ \ (x\in\R^N)
\]
along with the inequality
\begin{equation}\label{w-infty-ineq}
\ep_0\leq w_\infty(0,0)\leq 1-\ep_0.
\end{equation}
Recall that each $v_j$ is stable from below since it is a limit of a sequence of increasing solutions $\bar{u}_j(t,x)$ as $t\to+\infty$. Therefore $v_j$ is stable in the sense of Definition~\ref{def:stability}. Since such stability is robust under spatial translation and limiting procedures, we see that $w_\infty$ is also stable in the same sense.  Consequently, by Corollary~\ref{cor:Liouville}, we have either $w_\infty =0$ or $w_\infty=1$, but this contradicts the inequality \eqref{w-infty-ineq}. This contradiction proves Theorem~\ref{th:dichotomy}.
\end{proof}


\subsection{Proof of Theorem \ref{th:general}}\label{ss:proof-general}

We begin with the following proposition which states that the limit profile $\bar{v}$ defined in \eqref{vbar} is the minimal among all stationary solutions satisfying \eqref{vbar-1}.

\begin{Prop}[Minimality]\label{prop:minimal}
Let $v$ be a solution of the stationary problem \eqref{S} such that 
\[
0<v\leq 1\ \ \hbox{in}\ \ \Omega,\quad\  \lim_{x_1\to -\infty}v(x)=1.
\]
Then $v\geq \bar{v}$ on $\Omega$.
\end{Prop}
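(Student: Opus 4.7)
The plan is to establish the stronger statement that $\bar{u}(t,x)\leq v(x)$ for every $(t,x)\in \R\times \overline{\Omega}$, and then let $t\to+\infty$ to conclude. To do this I will first show the inequality for all sufficiently negative times using Lemma~\ref{lem:comparison-ancient}, and then extend it to all times by standard parabolic comparison, exploiting the fact that the stationary function $v(x)$ is itself a solution of the evolution problem~\eqref{E}.

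Fix $\delta_0\in(0,\tfrac12)$ as in \eqref{delta}. Because $v(x)\to 1$ as $x_1\to-\infty$, I can choose $a^*\in\R$ with
\[
1-\delta_0\leq v(x)\leq 1\quad \text{for all } x\in\Omega\cap\{x_1\leq a^*\}.
\]
The uniform convergence \eqref{ubar-phi}, together with the monotonicity of $\phi$ and the fact that $c>0$, implies that for any $T_1$ sufficiently negative one has $\bar u(t,x)\leq \delta_0$ on $(-\infty,T_1]\times (\Omega\cap\{x_1\geq a^*\})$, and in particular $\bar u(t,x)<1-\delta_0\leq v(x)$ on the cross-section $\Omega\cap\{x_1=a^*\}$ for every $t\leq T_1$. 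With $T=T_1$, the pair $(u,\widetilde u)=(\bar u,v)$ and the constant function $a(t)\equiv a^*$, all hypotheses of Lemma~\ref{lem:comparison-ancient}(i) are met; the conclusion gives $\bar u(t,x)<v(x)$ on $(-\infty,T_1]\times (\Omega\cap\{x_1\leq a^*\})$. Applying Lemma~\ref{lem:comparison-ancient}(ii) next with $b(t)\equiv a^*$ handles the other side: the strict boundary inequality $\bar u(t,x)<v(x)$ on $\{x_1=a^*\}$ is exactly what the first step delivered, and the smallness bound $\bar u\leq \delta_0$ for $x_1\geq a^*$ was arranged by the choice of $T_1$. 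Hence $\bar u(t,x)\leq v(x)$ throughout $(-\infty,T_1]\times\overline{\Omega}$.

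To propagate this inequality to all later times, I view $v(x)$ as a time-independent (super)solution of \eqref{E} and apply the parabolic comparison principle on $\Omega$ with homogeneous Neumann boundary conditions, starting from the initial time $t=T_1$. This yields $\bar u(t,x)\leq v(x)$ for every $t\geq T_1$, and hence for every $t\in\R$. Taking $t\to+\infty$ and using the definition~\eqref{vbar} of $\bar v$ then gives $\bar v\leq v$ on $\Omega$, which is the claim.

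The only delicate point is the verification of the strict boundary inequality on $\{x_1=a^*\}$ needed to invoke both parts of Lemma~\ref{lem:comparison-ancient}; this is why the choice of $a^*$ (driven by the decay of $v$ at $-\infty$) and of $T_1$ (driven by the decay of the planar profile $\phi$ ahead of the front) must be made jointly, but no obstruction arises since $v$ is bounded away from $0$ on $\{x_1\leq a^*\}$ while $\bar u$ becomes arbitrarily small there as $t\to-\infty$. Everything else is a direct application of tools already established in the paper.
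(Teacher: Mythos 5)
Your proof is correct and takes essentially the same route as the paper: choose $a^*$ so that $v\geq 1-\delta_0$ on $\{x_1\leq a^*\}$, choose $T_1$ so that $\bar u\leq\delta_0$ ahead of $a^*$ for $t\leq T_1$, verify the strict inequality on the cross-section $\{x_1=a^*\}$, apply Lemma~\ref{lem:comparison-ancient}(i) on $\{x_1\leq a^*\}$ and (ii) on $\{x_1\geq a^*\}$, and finally extend to all $t\geq T_1$ by the parabolic comparison principle before letting $t\to+\infty$. One small slip in exposition: the strict boundary inequality on $\{x_1=a^*\}$ is a hypothesis you must check \emph{before} invoking part (i), not an output of it — it follows directly from $\bar u\leq\delta_0<1-\delta_0\leq v$ on that slice, which you had already established; this does not affect the validity of the argument.
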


\begin{proof}
Let $\delta_0\in(0,\frac12)$ be the constant that appears in \eqref{delta}, and let $a<0$ be such that
\[
1-\delta_0 \leq v(x) \leq 1 \quad \hbox{for all}\ \ x\in\R^N\ \ \hbox{with}\ \ x_1\leq a.
\]
Next choose $T<0$ sufficiently negative so that
\[
0<\bar{u}(t,x)\leq \delta_0 \quad \hbox{for all}\ \ t\in (-\infty,T],\ x\in\Omega \ \ \hbox{with}\ \ x_1\geq a.
\]
Such $T$ exists since $\bar{u}$ satisfies \eqref{ubar-phi}. In particular, we have
\[
\bar{u}(t,a,y)<v(a,y) \quad \hbox{for all}\ \ t\in (-\infty,T].\ y\in\R^{N-1}.
\]
Applying Lemma~\ref{lem:comparison-ancient} (i) in the region $x_1\leq a$ and (ii) in the region $x_1\geq a$, we see that 
\[
\bar{u}(t,x)<v(x)\quad\hbox{for all}\ \ t\in (-\infty,T],\ x\in\Omega.
\]
By the comparison theorem, the above inequality holds also for $t\geq T$, hence $\bar{u}<v$ everywhere. Consequently, $\bar{v}=\lim_{t\to+\infty}\bar{u}\leq v$. The proposition is proved.
\end{proof}

\begin{proof}[Proof of Theorem~\ref{th:general}]
We first show that $H<\bar{v}$ in the region $x_1\leq 0$. Let $\delta_0>0$ be as in~\eqref{delta} and choose $a<0$ such that
\[
1-\delta_0 \leq \bar{v}(x)<1 \quad\hbox{for all}\ \ x\in\Omega\ \ \hbox{with}\ \ x_1\leq a.
\]
For each $\lambda\leq 0$, define $H^\lambda(x)=H^\lambda(x_1,y)=H(x_1-\lambda,y)$. Then $H^\lambda$ is defined in the region $x_1 \leq \lambda$ and satisfies $0\leq H^\lambda<1$. Note also that
\[
H^a(a,y)=0<\bar{v}(a,y) \quad\hbox{for all}\ \ y\in\R^{N-1}.
\]
Applying Lemma~\ref{lem:comparison-ancient} to $\tilde{u}=\bar{v}$ and $u=H^a$, we see that $H^a<\bar{v}$ in the region $x_1\leq a$. Now we let $\lambda$ vary from $a$ to $0$ continuously. Then, by the strong maximum principle, the graph of $H^\lambda$ remains strictly under that of $\bar{v}$ as $\lambda$ varies from $a$ to $0$. Consequently, 
\[
H(x)=H^0(x)<\bar{v}(x) \quad\hbox{for all}\ \ x\in\Omega\ \ \hbox{with}\ \ x_1\leq 0.
\]

Next let $U^P$ be the solution of \eqref{E2} whose initial data is $\Psi^P(x):=\Psi(x-P)$. Since $\Psi^P$ is a subsolution, $U^P(t,x)$ is monotone increasing in $t$. This, together with the fact that $\Psi^P\leq H<\bar{v}$, we see that $U^P$ converges as $t\to+\infty$ to some positive solution $V^P$ of \eqref{S} satisfying
\begin{equation}\label{VP<vbar}
\Psi^P<V^P \leq \bar{v} \quad \hbox{on} \ \ \Omega.
\end{equation}

Now we choose an arbitrary point $Q$ in $\Omega\cap\{x_1\leq -R_0\}$, where $R_0$ is the constant defined in \eqref{R0}, that is, the radius of the support of $\Psi$ and $\Psi^P$. For each $s\in[0,1]$, let $P(s)=(1-s)P+sQ$ be the interpolation point between $P$ and $Q$, and consider the family of subsolutions $\{\Psi^{P(s)}\}_{s\in[0,1]}$. At $s=0$, we have $\Psi^{P(0)}<V^P$ by \eqref{VP<vbar}. As $s$ increases continuously from $0$ to $1$, $\Psi^{P(s)}$ remains strictly below $V^P$ since otherwise the graph of $\Psi^{P(s^*)}$ touches that of $V^P$ from below for some $s=s^*$, but this is impossible by the strong maximum principle. Hence $\Psi^Q<V^P$ for all $Q$ in $\Omega\cap\{x_1\leq -R_0\}$, which implies
\[
V^P(x_1,y)>\Psi(0)>\alpha \quad \hbox{for all}\ \ x_1\leq -R_0,\ y\in\R^{N-1}
\]
since $\max_x \Psi^Q(x)=\max_x\Psi(x-Q)=\Psi(0)>\alpha$; see \eqref{Psi0} and \eqref{Psi(0)}. Since $f(s)>0$ for $\alpha<s<1$, the above inequality and a comparison argument imply
\[
\lim_{x_1\to-\infty}V^P(x_1,y)=1.
\]
Hence, by Proposition~\ref{prop:minimal}, $V^P\geq \bar{v}$. Therefore $V^P=\bar{v}$ by \eqref{VP<vbar} . In other words,
\[
\lim_{t\to\infty}U^P(t,x)=\bar{v}(x).
\]
Since $\Psi^P\leq u_0< \bar{v}$, we have $U^P(t,x)\leq u(t,x)<\bar{v}(x)$ by the comparison principle. Hence~\eqref{limit-u} holds. The theorem is proved.
\end{proof}


\section{Proof for blocking}\label{s:blocking}

In this section we prove Theorems~\ref{th:blocking-localized} and \ref{th:blocking-periodic} on the blocking of fronts. We first consider the case where the holes of $K$ are localized (Theorem~\ref{th:blocking-localized}), then dicuss the case where $K$ is periodic in $y$ (Theorem~\ref{th:blocking-periodic}). The two cases can be treated almost in parallel, with only a minor modification.

The blocking phenomenon in bistable equations caused by a narrow passage was first demonstrated rigorously by Matano \cite{M} for {\em dumbbell-shaped} bounded domains. Later, similar blocking results in bounded dumbbell-shaped domains were obtained by many authors including \cite{MM, J}. In the mean while, the works of  Berestycki, Bouhours and Chapuisat \cite{BBC} and Hamel and Zhang~\cite{HZ} discuss blocking phenomena for traveling waves that propagate through a semi-infinite cylinder-shaped domain (for $x_1\le0$) having a wide opening at the end, with bounded or unbounded section as $x_1\to+\infty$.

As mentioned in Introduction, the main idea of the proof of blocking in the present paper is to construct a barrier function, denoted by $w_0$, that is a stationary super-solution of the elliptic equation in the region $\Omega_{-1}=\Omega\cap\{x_1>-1\}$ with $w_{0}(-1, y) = 1$ for all $y\in \R^{N-1}$ and $w_{0}(x_{1}, y) \to 0$ as $x_{1}\to+\infty$. More precisely, $w_0$ is a solution of the problem
\begin{equation}\label{B}
\left\{
\begin{array}{ll}
\Delta w_0 +f(w_0)=0, \ \ &x\in \Omega_{-1} := \Omega\cap \{ x_1 > -1 \},   \\[4pt]
\dfrac{\partial w_0}{\partial \nu} = 0, \ \ &x\in \partial K =  \partial\Omega_{-1}\cap\{x_1 > - 1\},  \\[8pt]
w_0(-1, y) = 1,  & y\in \R^{N-1},
\end{array}\right.
\end{equation}
satisfying
\begin{equation}\label{w-infty}
w_0(x_1,y)\to 0\quad \hbox{as}\ \ x_1\to+\infty.
\end{equation}
In the case where $K$ is ${\mathcal P}$\,-\,periodic, we further assume that
\begin{equation}\label{w-periodic}
w_0(x_1, y) \ \ \hbox{is ${\mathcal P}$\,-\,periodic in $y$}.
\end{equation}
Next we will compare $w_0$ and $\bar{u}$ in \eqref{ubar-phi} and, by using Lemma \ref{lem:comparison-ancient} (ii),  it will follow that $\bar{v}(x)\le w_0(x)$ for all $x\in\Omega_{-1}$. Therefore the existence of such a barrier function $w_0$ immediately implies blocking. 

We present two different methods for constructing the barrier function $w_0$. Both methods rely on a variational argument for the same energy functional but under different constraints. The first method uses a constraint that is an extension of the one found in \cite[Theorem~6.2]{M}. The second method goes along the same line as in \cite{BBC}, though the argument is given in a more precise and more general manner.  Each of the two approaches is interesting in its own right and they may lead to different ways of generalizing the results in the future studies. 


\subsection{Proof of blocking: the first approach}\label{ss:blocking1}

We begin with the case where the holes of $K$ are localized in a bounded area. The case where $K$ is periodic can be treated with only minor modifications.

To start with, we observe that we can extend $f$ linearly outside the interval $(0,1)$, that is, we set
\be\label{fextended}
\hbox{$f(s)= f'(0) s$ for $s<0$, and $f(s) = f'(1) (s-1)$ for $s > 1$.}
\ee
Define
\[
F(s):=\int_0^s f(r)dr.
\]
By the condition \eqref{f}, the function $-F$ possesses a local minimum at $s=0$, a global minimum at $s=1$, with
\[
0=-F(0)>-F(1),
\]
and one local maximum at $s=\alpha$, with $-F(\alpha)>0$. One has that
\[
-F(s)\to+\infty \quad\hbox{as}\ \ s\to\pm\infty.
\]
We choose constants $\mu>0$, $\sigma>0$ and $\delta\in(0,\alpha]$ such that
\begin{equation}\label{W2}
-F(s)+\mu (s-\delta)^2\geq \sigma  \quad \hbox{for any}\ \ s\in\R.
\end{equation}
Such constants certainly exist since $-F(s)> 0$ for $s\in(0,\alpha]$ and $-F$ is bounded from below. From \eqref{W2} and $-F\ge0$ in $(-\infty,\alpha]$, it follows that
\begin{equation}\label{W1}
-F(s)+\mu (s-\bar s)^2\geq 0 \quad \hbox{for any}\ \ s\in\R, \ \bar s\in(-\infty,\delta].
\end{equation}

Next we decompose $\Omega_b :=\Omega\cap\{x_1> b\}$ into a union of bounded subdomains $D_j\subset\Omega_b \;(j=1,2,3,\ldots)$ with uniformly Lipschitz boundaries such that
\begin{equation}\label{Dj-Omega}
D_i\cap D_j=\emptyset\ (i\ne j),\quad \ \Omega_b \subset \bigcup_{j=1}^\infty \overline{D}_j,
\end{equation}
\begin{equation}\label{Dj-PW}
\int_{D_j}|\nabla \phi|^2 dx \geq 2\mu \int_{D_j}\left(\phi-\bar\phi\right)^2 dx,\quad 
{}^\forall\phi\in H^1(D_j) \ \ (j=1,2,3,\ldots),
\end{equation}
\begin{equation}\label{Dj-min}
D_{\min} :=\inf_{j}|D_j|>0,
\end{equation}
where $\bar\phi$ denotes the average of $\phi$ over $D_j$. The inequality \eqref{Dj-PW} is the so-called Poincar\'e--Wirtinger inequality, thus the condition \eqref{Dj-PW} implies that the first positive eigenvalue of $-\Delta$ on $D_j$ under the Neumann boundary conditions, denoted by $\mu_1(D_j)$, satisfies
\begin{equation}\label{mu1(Dj)}\tag{\ref{Dj-PW}'}
\mu_1(D_j)\geq 2\mu\quad \hbox{for all}\ \ j\in\N.
\end{equation}

\begin{Rk}[About the condition \eqref{Dj-PW}]\label{rk:Dj}
By the Szeg\H{o}--Weinberger inequality \cite{W}, we have $\mu_1(D_j)\leq \mu_1(B)$, where $B$ is a ball of the same volume as $D_j$. Thus
\[
\mu_1(D_j)\leq \left(\frac{|B_1|}{|D_j|}\right)^{\frac{2}{N}} \mu_1(B_1),
\]
where $B_1$ denotes the unit ball in $\R^N$. This and \eqref{mu1(Dj)}, together with \eqref{Dj-min}, imply
\begin{equation}\label{Dj-volume}
D_{\min}\leq |D_j|\leq \left(\frac{\mu_1(B_1)}{2\mu}\right)^{\frac N2} |B_1|.
\end{equation}
Thuerefore the volume of $D_j$ is uniformly bounded from below and above. This volume constraint is not enough to guarantee \eqref{Dj-PW} or \eqref{mu1(Dj)}. For example, even if $D_j$ has a proper volume, $\mu_1(D_j)$ can be very small if it is a long thin object, or if it is dumbell-shaped with a narrow middle part. On the other hand, if $D_j$ is a convex domain, then, by \cite{PW}, we have
\[
\mu_1(D_j)\geq \pi^2 \left({\rm diam}(D_j)\right)^{-2},
\]
where ${\rm diam}(D_j)$ denotes the maximal diameter of $D_j$, hence \eqref{mu1(Dj)} may be satisfied if ${\rm diam}(D_j)$ is not too large. Summarizing, the conditions \eqref{Dj-PW} (or \eqref{mu1(Dj)}) and \eqref{Dj-min} are fulfilled if the volume of $D_j$ lies in a certain range, if the maximal diameter of $D_j$ is not too large, and if it has a relatively regular shape such as convexity. Since we are assuming that $\Omega_b$ is a uniformly Lipschitz domain, such a decomposition exists.  What we have to pay attention is only the region near the wall $K$. In the region away from $K$, we can simply set~$D_j$ to be rectangles or parallelepiped domains of the same size.    
\end{Rk}

Now we are ready to prove the main results of this section.
\vskip 8pt
\noindent
\underbar{\bf The case of localized holes}:

\vskip 6pt 
\begin{proof}[Proof of Theorem~\ref{th:blocking-localized}]
We first recall the assumption of the theorem: $\Omega_{a,b}$ is bounded and
\begin{equation}\label{ab-epsilon}
|\Omega_{a,b}| \leq \ep,
\end{equation}
where the value of $\ep$ is to be specified later. We define a functional 
\begin{equation}\label{J-1}
\begin{split}
J_{-1}(w):=\int_{\Omega\cap\{-1<x_1<b\}}\left( \frac{|\nabla w|^2}{2}-F(w)+F(1)\right)dx+\int_{\Omega_b }\left( \frac{|\nabla w|^2}{2}-F(w)\right)dx,
\end{split}
\end{equation}
where $w$ varies in the set
\begin{equation*}
X:=\{w\in H^1_{loc}(\overline{\Omega_{-1}})\mid w(-1,y)=1 \ \hbox{for (almost) all}\ y\in\R^{N-1}\}.
\end{equation*}

If $w$ satisfies
\begin{equation}\label{Dj-delta}
\bar{w}_j:=\frac{1}{|D_j|} \int_{D_j} w\hspace{2pt} dx \leq \delta \quad(j=1,2,3,\ldots),
\end{equation}
then, by \eqref{W1} and \eqref{Dj-PW}, we have
\begin{equation}\label{Dj-J-w}
\int_{D_j}\left( \frac{|\nabla w|^2}{2}-F(w)\right)dx\geq  \int_{D_j}\left(\mu (w-\bar{w}_j)^2-F(w)\right)dx \geq 0.
\end{equation}
Therefore the following sum is well-defined
\[
0\leq \sum_{j=1}^\infty \int_{D_j}\left( \frac{|\nabla w|^2}{2}-F(w)\right)dx \leq+\infty.
\]
Thus the second term on the right-hand side of \eqref{J-1} can be defined as the following sum:
\[
\int_{\Omega_b }\left( \frac{|\nabla w|^2}{2}-F(w)\right)dx:=
\sum_{j=1}^\infty \int_{D_j}\left( \frac{|\nabla w|^2}{2}-F(w)\right)dx.
\]
Furthermore, since $W(s)$ attains its global minimum at $s=1$, we have
\begin{equation}\label{J-1-w}
\int_{\Omega\cap\{-1<x_1<b\}}\left( \frac{|\nabla w|^2}{2}-F(w)+F(1)\right)dx\geq 0.
\end{equation}
Therefore the functional $J_{-1}(w)$ in \eqref{J-1} is well-defined with values in $[0,+\infty]$, provided that $w$ satisfies the constraint \eqref{Dj-delta}. For each $\delta\in(0,\alpha]$, we define:
\begin{equation}\label{X-delta} 
X_\delta:=\{w\in X\mid \hbox{$w$ satisfies \eqref{Dj-delta}} \}.
\end{equation}

Next we define a function $\zeta$ by
\begin{equation}\label{zeta1}
\zeta (x) = \zeta (x_1,y)= 
\begin{cases}
\ 1 \quad &\text{if}\quad -1 \leq x_1 \leq a, \\[2pt]
\ \dfrac{b- x_1}{b-a} \ \  &\text{if}\quad a \leq x_1 \leq b, \\[2pt]
\ \, 0 \quad& \text{if}\quad x_1\geq b .
\end{cases}
\end{equation}
This function clearly belongs to $X_\delta$ for any $\delta\in(0,\alpha]$ and satisfies
\[
J_{-1}(\zeta)=\int_{\Omega\cap\{a\leq x_1\leq b\}}\left( \frac{|\nabla \zeta|^2}{2}-F(\zeta)+F(1)\right)dx
\leq \ep\left(\frac{1}{2(b-a)^2}-F(\alpha)+F(1)\right).
\]

Now we consider the following minimization problem:
\begin{equation}\label{min-J}
\underset{w\in X_\delta}{\rm Minimize}\  J_{-1}(w).
\end{equation}
The global minimizer of the above problem exists by virtue of \eqref{Dj-J-w} and \eqref{J-1-w}. Let $w_0$ be the global minimizer. Since $\zeta$ belongs to $X_\delta$, we have
\begin{equation}\label{J-w0}
0\leq J_{-1}(w_0) \leq J_{-1}(\zeta)\leq \ep\left(\frac{1}{2(b-a)^2}-F(\alpha)+F(1)\right).
\end{equation}
We claim that the condition \eqref{Dj-delta} holds strictly for $w_0$, namely
\begin{equation}\label{Dj-delta-0}
\overline{w_0}|_{D_j}:=\frac{1}{|D_j|} \int_{D_j} w_0\hspace{2pt} dx < \delta \quad(j=1,2,3,\ldots),
\end{equation}
if $\ep$ is chosen sufficiently small. To see this, suppose the contrary. Then 
\begin{equation}\label{Dj-w02}
\overline{w_0}|_{D_{j^*}}=\frac{1}{|D_{j^*}|} \int_{D_{j^*}} w_0 \hspace{2pt} dx = \delta \quad \hbox{for some} \ \ j^*\in\N.
\end{equation}
By \eqref{W2}, \eqref{Dj-PW}, and \eqref{Dj-w02}, we have
\[
\int_{D_{j^*}}\left( \frac{|\nabla w_0|^2}{2}-F(w_0)\right)dx\geq  \int_{D_{j^*}}\left(\mu (w_0-\overline{w_0}|_{D_{j^*}})^2-F(w_0)\right)dx \geq \sigma |D_{j^{*}}|,
\]
while, by \eqref{W1} and \eqref{Dj-delta}, we have, for all $j\in\N$,
\[
\int_{D_j}\left( \frac{|\nabla w_0|^2}{2}-F(w_0)\right)dx\geq  \int_{D_j}\left(\mu (w_0-\overline{w_0}|_{D_j})^2-F(w_0)\right)dx \geq 0.
\]
This, together with \eqref{J-1-w}, implies
\begin{equation}\label{J-w03}
J_{-1}(w_0)\geq \sigma |D_{j^{*}}|\geq \sigma D_{\min}.
\end{equation}

Now we choose $\ep>0$ sufficiently small so that
\begin{equation}\label{ep}
\ep< \sigma D_{\min} \left(\frac{1}{2(b-a)^2}-F(\alpha)+F(1)\right)^{-1}.
\end{equation}
Then \eqref{J-w03} contradicts \eqref{J-w0}. This contradiction proves that \eqref{Dj-delta-0} holds, provided that $\ep$ has been chosen to satisfy \eqref{ep}. This implies that $w_0$ lies in the interior of $X_\delta$ in the $H^1_{loc}$ topology, hence it satisfies the following Euler--Lagrange equation: 
\[
\left\{
\begin{array}{ll}
\Delta w_0 +f(w_0)=0, \ \ & x\in \Omega_{-1},\\[3pt]
\dfrac{\partial w_0}{\partial \nu} = 0, \ \ & x\in \partial\Omega_{-1}\cap\{x_1>-1\},\\[8pt]
w_0(-1,y)=1,\ \ &y\in\R^{N-1},
\end{array}\right.
\]
along with the constraint \eqref{Dj-delta-0}. 

Next we compare $w_0$ and $\bar{u}$ in \eqref{ubar-phi}. If we choose $T$ sufficiently negative, we have
\be\label{barudelta0}
0<\bar{u}(t,x)\leq \delta_0 \quad \hbox{for all}\ \ t\in (-\infty,T],\,x\in\Omega_{-1}=\Omega\cap \{x_1> -1\},
\ee
where $\delta_0$ is the constant introduced in \eqref{delta}. Furthermore,
\be\label{baruw0}
\bar{u}(t,x)<w_0(x)=1 \quad \hbox{for all}\ \ t\in(-\infty,T],\,x\in\Omega\cap \{x_1=-1\}.
\ee
Thus, by Lemma \ref{lem:comparison-ancient} (ii), we have $\bar{u}(t,x)<w_0(x)$ for all $x\in\Omega_{-1}$ and $t\in(-\infty,T]$, hence for all $t\in\R$ by the comparison principle. It follows that $\bar{v}(x)\le w_0(x)$ for all $x\in\Omega_{-1}$ . This, together with \eqref{Dj-delta-0}, implies that the propagation does not occur. In other words, we do not have $\bar{v}(x)\to 1$ as $x_1\to+\infty$.  Together with Theorem~\ref{th:dichotomy}, the proof of Theorem~\ref{th:blocking-localized} is complete.
\end{proof}

\begin{Rk}\label{rk:V-0}
Though we did not need it in the above proof, we can further show that $w_0(x)\to 0$ as $x_1\to+\infty$. Indeed, by \eqref{Dj-J-w}, we have
\[
+\infty>J_{-1}(w_0)\geq \sum_{j=1}^\infty \int_{D_j}\left( \frac{|\nabla w_0|^2}{2}-F(w_0)\right)dx\geq \sum_{j=1}^\infty\int_{D_j}\left(\mu (w_0-\overline{w_0}|_{D_j})^2-F(w_0)\right)dx,
\]
where $\overline{w_0}|_{D_j}$ denotes the average of $w_0$ on $D_j$. Hence
\[
\lim_{j\to+\infty} \int_{D_j}\left(\mu (w_0-\overline{w_0}|_{D_j})^2-F(w_0)\right)dx=0.
\] 
One can then deduce that $\overline{w_0}|_{D_j}\to 0$ as $j\to+\infty$ since $\mu(s-\bar s)^2-F(s)>0$ for $s\in\R$ if $0<\bar s\leq \delta$. Elliptic estimates then imply $w_0(x)\to 0$ as $x_1\to+\infty$.
\end{Rk}

\vskip 8pt
\noindent
\underbar{\bf The case where $K$ is periodic}:
\vskip 6pt

The proof of Theorem~\ref{th:blocking-periodic} goes completely in parallel to the proof of Theorem~\ref{th:blocking-localized}. The only difference is that the decomposition of the domain $\Omega_b$ given in \eqref{Dj-Omega} is now done in the unit cylinder of periodicity $\Delta_{\mathcal P}=\R\times{\mathcal C}_{\mathcal P}$, where ${\mathcal C}_{\mathcal P}$ is as in \eqref{unit-cell}, and the energy $J_{-1}$ is also defined in $\Delta_{\mathcal P}$. Thus each $D_j$ is a subset of $\Omega_b\cap\Delta_{\mathcal P}$ and the condition \eqref{Dj-Omega} is replaced by 
\begin{equation}\label{Dj-Omega-P}
D_i\cap D_j=\emptyset\ (i\ne j),\quad \ \Omega_b\cap\Delta_{\mathcal P}  \subset \bigcup_{j=1}^\infty \overline{D}_j.\quad |\partial D_j| =0 \ \ (j=1,2,3,\ldots),
\end{equation}
and the energy functional $J_{-1}$ is given in the form
\[
\begin{split}
J_{-1}(w):=\int_{\Omega\cap\Delta_{\mathcal P}\cap\{-1<x_1<b\}}\left( \frac{|\nabla w|^2}{2}-F(w)+F(1)\right)dx+\int_{\Omega_b \cap\Delta_{\mathcal P}}\left( \frac{|\nabla w|^2}{2}-F(w)\right)dx.
\end{split}
\]
Apart from these obvious modifications, the proof of Theorem~\ref{th:blocking-periodic} can be carried out in completely the same manner as that of Theorem~\ref{th:blocking-localized}, so we omit the details.


\subsection{Proof of Theorem \ref{th:blocking-periodic}, the second approach}\label{ss:blocking2}

In this subsection we explain the second approach for the proof of blocking. We consider only the periodic case. As mentioned earlier, this approach is similar to the one found in \cite{BBC}. 

To construct the barrier $w_0$ satisfying \eqref{B} and \eqref{w-infty}, we first consider a solution $w= w_R$ of the following problem in the region 
$\Omega_{-1,R}= \Omega \cap \{-1 < x_1 < R\}$, for $R\geq M+1$, that vanishes on the right hand side boundary:
\begin{equation}\label{BR}
\left\{
\begin{array}{ll}
\Delta w +f(w)=0, \ \ &x\in \Omega_{-1, R},   \\[4pt]
\dfrac{\partial w}{\partial \nu} = 0, \ \ &x\in \partial K =  \partial\Omega_{-1, R}\cap\{-1 < x_1 < R\},  \\[6pt]
w(-1, y) = 1, &y\in \R^{N-1},\\ [4pt]
w(R, y) = 0, &y\in \R^{N-1},\\ [4pt]
w(x_1, \cdot) \quad\text{is}\quad {\mathcal P}\text{\,-\,periodic}.&\\
\end{array}\right.
\end{equation}
The idea is to construct a function that is close to 1 in $\Omega_{-1, a}$, close to 0 in $\Omega_{b, R}$ and has a transition from 1 to 0 in $\Omega_{a,b}$  that is not much costly in terms of energy when the trace of this set in one periodicity cell is small in measure.

By the maximum principle we know that all solutions $w$ of \eqref{BR} with the function $f$ extended as in~\eqref{fextended} satisfy $0 \leq w \leq 1$, whence are solutions with the original function $f$. We then require some notations. We introduce the restrictions of the sets $\Omega, \Omega_\alpha, \Omega_{a,b}$ to one periodicity cell. That is, we denote:
\[
D:= \Omega\ \cap\ \R\!\times\!{\mathcal C}_{\mathcal P} ; \quad D_\alpha := \{ (x_1, y) \in \Omega\, ; \;  x_1 > \alpha,\;  y \in {\mathcal C}_{\mathcal P}\} ;
\]   
\[ 
D_{\alpha, \beta} := \{ (x_1, y) \in \Omega\, ; \;  \alpha< x_1 < \beta, \; y \in {\mathcal C}_{\mathcal P}\}=\Omega_{\alpha, \beta}\ \cap\ (\alpha, \beta)\!\times\!{\mathcal C} _{\mathcal P}. 
\]
For simplicity, we denote $D^R= D_{-1, R}$. Let $F(z):= \int_0^z f(s)ds$ and consider the functional:
\[
J (w):=  \int_{D^{R}} \frac{ |\nabla w |^2}{2} - F(w).
\]

Consider the function $\zeta= \zeta (x_1)$ defined by:
\begin{equation}\label{zeta}
\zeta (x) = \zeta (x_1)= 
\begin{cases}
\ 1 \quad &\text{if}\quad -1 \leq x_1 \leq a ; \\
\ \dfrac{b- x_1}{b-a} \ \  &\text{if}\quad a \leq x_1 \leq b ; \\
\ \, 0 \quad& \text{if}\quad b \leq x_1 \leq R.
\end{cases}
\end{equation}

We are going to locate a {\em local} minimum of $J$ over the space of ${\mathcal P}$-periodic functions in~$H^1(D^R)$ that satisfy the limiting conditions at $x_1= 1$ and $x_1= R$, and which is close to~$\zeta$. We denote by ${\mathcal H}^1_{\mathcal P}$ the space of ${\mathcal P}$-periodic functions over $\Omega^R := \Omega_{-1, R}$ that are in $H^1(D^R)$ and let
\[
{\mathcal H}={\mathcal H}_R := \{ u \in {\mathcal H}^1_{\mathcal P} ; \;  u\in H^1(D^R) ,\;
u(-1, y) = 1, \, u(R,y) = 0, \;\text{for (almost) all} \; y\in\R^{N-1} \}.
\]

We consider the problem of finding a local minimum of $J$ over ${\mathcal H}_R$, at least when $\ep$ is sufficiently small. Here is the key result in this direction. 

\begin{Prop}\label{locmin}
Given the domain $\Omega_b$ and $b-a$, there exist $\delta >0$ and $\ep>0$ such that if $|D_{a,b}| \leq \ep$, then for all $R\geq M+1$ we have 
\[
\inf \; \{J(w) \, ; \;\, w\in{\mathcal H}_R, || w - \zeta ||_{H^1(D^R)} = \delta \} > J(\zeta).
\]
\end{Prop}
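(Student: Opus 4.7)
The strategy is a second-order Taylor expansion: write $w=\zeta+h$ on the sphere, so $h\in H^1(D^R)$ is $\mathcal{P}$-periodic, vanishes at $x_1=-1$ and $x_1=R$, satisfies $\partial h/\partial\nu=0$ on $\partial K$, and has $\|h\|_{H^1(D^R)}=\delta$. Then
\[
J(\zeta+h)-J(\zeta)=L(h)+\tfrac12 Q(h)+\mathcal{R}(h),
\]
with $L(h)=\int_{D^R}(\nabla\zeta\cdot\nabla h-f(\zeta)h)\,dx$, $Q(h)=\int_{D^R}(|\nabla h|^2-f'(\zeta)h^2)\,dx$, and $\mathcal{R}(h)$ a Taylor remainder. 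I would show $|L(h)|\le C\ep^{1/2}\delta$ (since $L$ is localized in the thin strip $D_{a,b}$), $\tfrac12 Q(h)\ge c_*\delta^2$ uniformly in $R$ (since $\zeta\equiv 0,1$ is strictly stable in the outer regions), and $|\mathcal{R}(h)|=o(\delta^2)$ as $\delta\to 0$; choosing $\delta$ small and then $\ep$ small then yields strict positivity.

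\emph{Linear term.} Since $\zeta$ depends only on $x_1$ and is constant on $[-1,a]\cup[b,R]$, $\nabla\zeta$ is supported in $\overline{D_{a,b}}$; since $\zeta\in\{0,1\}$ outside $D_{a,b}$ and $f(0)=f(1)=0$, $f(\zeta)\equiv 0$ there. Cauchy--Schwarz together with $|D_{a,b}|\le\ep$ then yields
\[
|L(h)|\le\bigl(\|\zeta'\|_{L^\infty}+\|f\|_{L^\infty[0,1]}\bigr)\,|D_{a,b}|^{1/2}\,\|h\|_{H^1(D_{a,b})}\le C_1\ep^{1/2}\delta,
\]
with $C_1$ depending only on $f$ and $b-a$.

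\emph{Quadratic form.} On $D_{-1,a}\cup D_{b,R}$, $f'(\zeta)\in\{f'(0),f'(1)\}$, so $-f'(\zeta)h^2\ge\sigma_0 h^2$ with $\sigma_0:=\min(|f'(0)|,|f'(1)|)>0$. On $D_{a,b}$ only the trivial bound $|f'(\zeta)|\le M_0$ is available, but the Sobolev embedding $H^1(D^R)\hookrightarrow L^p(D^R)$ for some $p>2$ (with a constant uniform in $R$ since $D^R$ is a uniformly Lipschitz truncated periodic cylinder) combined with H\"older and $|D_{a,b}|\le\ep$ yields $\|h\|_{L^2(D_{a,b})}^2\le C_2\ep^{\theta}\delta^2$ for some $\theta=\theta(N)>0$. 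Hence, with $c_0:=\min(1,\sigma_0)>0$,
\[
Q(h)\ge\|\nabla h\|_{L^2(D^R)}^2+\sigma_0\|h\|_{L^2(D_{-1,a}\cup D_{b,R})}^2-M_0 C_2\ep^{\theta}\delta^2\ge(c_0-C_3\ep^{\theta})\delta^2,
\]
which exceeds $2c_*\delta^2$ for $\ep$ small, with $c_*>0$ independent of $R$.

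\emph{Remainder, conclusion, and main obstacle.} Taylor's theorem and $f\in C^1$ yield a pointwise bound of the form $|\mathcal{R}_{\mathrm{pw}}(x)|\le C\min(\eta,1)\,h(x)^2$ on $\{|h|\le\eta\}$ and $\le C\,h(x)^2$ elsewhere; splitting the integral at level $\eta$, using Chebyshev $|\{|h|>\eta\}|\le\delta^2/\eta^2$ and the Sobolev bound $\|h\|_{L^p}\le C\delta$, one obtains $|\mathcal{R}(h)|\le C(\eta\delta^2+\delta^{2+\theta'}/\eta^{\theta'})$ for some $\theta'>0$; optimizing in $\eta$ gives $|\mathcal{R}(h)|\le C\delta^{2+\theta''}=o(\delta^2)$ for some $\theta''>0$. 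Combining, $J(\zeta+h)-J(\zeta)\ge c_*\delta^2-C_1\ep^{1/2}\delta-C\delta^{2+\theta''}$; fixing $\delta>0$ so small that $C\delta^{\theta''}\le c_*/2$, then $\ep>0$ so small that $C_1\ep^{1/2}\le c_*\delta/4$, yields $J(\zeta+h)-J(\zeta)\ge c_*\delta^2/4>0$ uniformly over the sphere and in $R$. The main obstacles to manage are: (i) the $R$-uniformity of the Sobolev constant of $D^R$, handled by the uniform Lipschitz structure of the periodic truncated cylinder; (ii) the $N$-uniformity of the remainder, which requires the level-set splitting above (the naive $H^1\hookrightarrow L^3$ bound failing for $N>6$); and (iii) the order of quantifiers, $\delta$ first and $\ep$ second, since on the sphere $\delta$ is a fixed radius rather than an infinitesimal, so the cubic-type remainder must be absorbed by the quadratic form uniformly over the sphere before the linear term is absorbed by the smallness of $\ep$.
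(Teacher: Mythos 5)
Your proposal is correct in substance and reaches the same conclusion by a closely related but technically distinct route. The paper avoids an explicit second-order Taylor expansion: it splits $J$ over $D_{-1,a}$, $D_{a,b}$, $D_{b,R}$ and bounds each piece directly using two \emph{global} polynomial inequalities, $-F(z)\ge -F(1)+\gamma(z-1)^2$ (valid for all $z\in\R$, giving coercivity near $\zeta\equiv1$ with no linear term and no remainder at all) and $-F(z)\ge\gamma z^2-C|z|^q$ with $q=2N/(N-2)$ (giving coercivity near $\zeta\equiv0$ with the cubic-type error packaged as a single $\|v\|_{H^1}^q$ term via Sobolev). The middle strip $D_{a,b}$ produces only a $-C\ep$ correction. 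This yields the clean inequality $J(w)\ge J(\zeta)+\tfrac{\gamma}{2}\|w-\zeta\|^2-C\|w-\zeta\|^q-C\ep$, and the proof finishes exactly as you anticipate by taking $\delta$ then $\ep$ small. Your version separates the linear and quadratic terms explicitly and estimates the Taylor remainder by a level-set decomposition. That works and your three ``main obstacles'' are precisely the right things to worry about, but two points deserve attention. First, your pointwise remainder bound $|\mathcal R_{\mathrm{pw}}(x)|\le C\min(\eta,1)h(x)^2$ on $\{|h|\le\eta\}$ tacitly assumes $f'$ Lipschitz; the paper only has $f\in C^1$ (with the linear extension of $f$ outside $[0,1]$), so $f'$ is merely uniformly continuous and you should replace $\eta$ by a modulus of continuity $\omega(\eta)$; the optimization over $\eta$ still gives $o(\delta^2)$ since $\omega(\eta)\to0$, so this is a cosmetic fix. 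Second, the paper's global inequalities sidestep the level-set splitting entirely (the dimensional issue you raise with $H^1\hookrightarrow L^3$ is handled in the paper simply by taking $q$ equal to the critical Sobolev exponent, which is $>2$ in every dimension). In short: the paper's approach is slightly more compact because the ``remainder'' is absorbed into a ready-made global inequality for $-F$; yours is the standard second-variation argument and is perfectly valid once the $C^1$-versus-Lipschitz point is cleaned up.
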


\begin{proof}
It should be emphasized that $\delta$ and $\ep$ are independent of $R\geq M+1$. We denote $w= \zeta + v$ and thus $v$ is in the space ${\mathcal H}_{0,R}$ corresponding to the limiting conditions $v(-1,y) = v(R,y) = 0$ for (almost) all $y$, with periodicity in $y$. We split the functional $J$ in three parts $J^1, J^2, J^3$ corresponding to integration over the  the domains $D_{-1,a}, D_{a,b}$ and $D_{b,R}$ respectively. 

\vskip 0.2cm
\noindent{\bf 1. Estimate for $J^1$}.
From the assumptions on $f$, it follows that there exists some $\gamma >0$ such that
\begin{equation*}
- F(z) \geq - F(1) +  \gamma (z-1)^2 \; \text{for all} \; z \in \R.
\end{equation*}
Therefore, reducing $\gamma$ if need be, we can assume that $\gamma < 1/2$, and we get
\begin{equation}\label{J1}
J^1(w)= \int_{D_{-1, a}} \frac{ |\nabla w |^2}{2} - F(w)= J^1( 1+ v) \geq \gamma ||v||_{H^1(D_{-1, a})}^2 + J^1(\zeta).
\end{equation}

\vskip 0.2cm
\noindent{\bf 2. Estimate for $J^2$}.
We first note that
\[
 \int_{D_{a, b}} \frac{ |\nabla w |^2}{2} \geq \int_{D_{a, b}} \frac{ |\nabla v |^2}{4}- \int_{D_{a, b}} \frac{ |\nabla \zeta |^2}{2} =\int_{D_{a, b}} \frac{ |\nabla v |^2}{4}- \frac{1}{2(b-a)^{2} }|D_{a, b}|.
\]
Therefore, if $|D_{a,b}| \leq \ep$ we get
\[
\int_{D_{a, b}} \frac{ |\nabla w |^2}{2} \geq \int_{D_{a, b}} \frac{ |\nabla v |^2}{4}+ \int_{D_{a, b}} \frac{ |\nabla \zeta |^2}{2}- C \ep,
\] 
where $C>0$ is a generic constant. Using the same inequality as above for $-F$, we get
\[
\int_{D_{a,b}}\!\!\!\!\!-F(w) \geq \int_{D_{a,b}}\!\!\!\Big(- F(\zeta) + \gamma \frac{v^2}{2} - \gamma (\zeta -1)^2 - F(1) + F(\zeta)\Big)\geq \int_{D_{a,b}}\!\!\!\Big(- F(\zeta) + \gamma \frac{v^2}{2}\Big)\ - C \ep.
\]
Combining the two preceding inequalities yields:
\begin{equation}\label{J2}
J^2(w)= \int_{D_{a, b}} \frac{ |\nabla w |^2}{2} - F(w)\geq \frac{\gamma}{2} ||v||_{H^1(D_{a, b})}^2 + J^2(\zeta) -C \ep.
\end{equation}

\vskip 0.2cm
\noindent{\bf 3. Estimate for $J^3$}. Owing to the assumption $f'(0) <0$, $0$ is a strict local minimum for $-F$. Therefore, for $q>2$ given, reducing $\gamma$ if need be, there exists $C>0$ such that
\[
-F(z) \geq \gamma z^2 - C |z|^q, \; \text{for all} \; z\in\R.
\]
We choose $q$ to be defined by $q := \frac{2N}{N-2}$ when $N\geq 3$ and $q=4$ when $N=2$. Since $\zeta(x)=0$ in $D_{b, R}$ and choosing $\gamma < 1/2$, we see that
\[
J^3(w) = \int_{D_{b, R}} \frac{ |\nabla w |^2}{2} - F(w) \geq    \gamma||v||_{H^1(D_{b, R})}^2- C \int_{D_{b, R}}   |v|^q.
\]
Next, we require the following consequence of the Gagliardo-Nirenberg and Sobolev inequalities.

\begin{Lemma}
There is a constant $C_q>0$, independent of $R\geq M+1$, such that for any function $u$ in $H^1(D_{b, R})$, periodic in $y$, that vanishes on $y=R$ there holds
\[
\|u\|_{L^q(D_{b, R})}\leq C_q\|u\|_{H^1(D_{b, R})}.
\]
\end{Lemma}

We wish to emphasize that in these inequalities, the constant $C$ depends on the domain~$D_b$, in particular its Lipschitz norm, but it does not depend on $R$. These inequalities follow from the classical Gagliardo-Nirenberg and Sobolev inequalities (and   known as Ladyzhenskaya's inequality in the particular case $N=2, q=4$).  

Combining the previous inequalities yields:
\begin{equation}\label{J3}
J^3(w)= \int_{D_{b, R}} \frac{ |\nabla w |^2}{2} - F(w)\geq \gamma ||v||_{H^1(D_{b, R})}^2 - C ||w||_{H^1(D_{b, R})}^q +J^3(\zeta).
\end{equation}

\vskip 0.2cm
\noindent{\bf 4. Conclusion of the proof of Proposition~\ref{locmin}}. Combining the three estimates for $J^1, J^2$ and $J^3$, we obtain:

\begin{equation}\label{Jmin}
J(w)= \int_{D^R} \frac{ |\nabla w |^2}{2} - F(w)\geq \frac{\gamma}{2} ||w-\zeta||_{H^1(D^R)}^2  - C ||w-\zeta||_{H^1(D^R)}^q  -C \ep + J(\zeta).
\end{equation}
By choosing adequately $\delta>0$ we can make $\frac{\gamma}{2}\delta^2 - C \delta^q >0$. Then for $\ep>0$ small enough, this inequality proves Proposition~\ref{locmin}.
\end{proof}

\begin{proof}[Conclusion of the proof of Theorem \ref{th:blocking-periodic}]

We choose $\delta>0$ and $\ep >0$ as in the previous proposition. Note that they are independent of $R\geq M+1$. For any such $R$ we consider the minimization problem
\begin{equation}\label{minim}
\min \{ J(w) \; ; w \in {\mathcal H}_R, \; \|w- \zeta\|_{H^1(D^R)}\leq \delta\}.
\end{equation}
By standard arguments, we know that $J$ achieves its minimum on the closed ball of radius $\delta$ centered on $\zeta$ in ${\mathcal H}_R$. From Proposition~\ref{locmin}, it follows that
\[
J(\zeta) < \min \{ J(w) \; ; w \in {\mathcal H}_R, \; ||w- \zeta||_{H^1(D^R)}= \delta\}.
\]
This implies that the minimum of $J$ in the ball of radius $\delta$ about $\zeta$ is necessarily an {\em interior} minimum. Hence it is a local minimum of the energy $J$. 

For all $R\geq M+1$, we have thus found a solution $w_R$ of problem~\eqref{BR} that furthermore satisfies
\[
||w_R - \zeta||_{H^1(D^R)}\leq \delta.
\]
Since $\delta >0$ is independent of  $R$, we see that $w_R$ is bounded in $H^1$ norm, independently of $R$. Therefore, we can extract a subsequence $w_{R_j}$ of $w_R$ that converges weakly to some function~$w_0$ in $H^1(D_{-1})$. This function is a solution of problem~\eqref{B}. Moreover, as a limit of local minima, it is also a {\em stable} solution of problem~\eqref{B}. Arguing precisely as in the proof of Theorem~\ref{th:dichotomy} in Subsection~\ref{ss:proof-dichotomy}, we see that either $w_0(x_1, y) \to 0$ or $w_0(x_1, y) \to 1$ as $x_1\to+\infty$. But we know that the latter is impossible since $w_0$ is in~$L^2(D_{-1})$, whence $w_0(x_1, y) \to 0$ as $x_1\to+\infty$. The construction of the barrier function~$w_0$ is thus complete. 

In order to conclude the proof of Theorem~\ref{th:blocking-periodic}, it suffices to show that $\bar{v}\le w_0$ in the region~$\Omega_{-1}$. This follows from the existence of $T$ sufficiently negative such that~\eqref{barudelta0}-\eqref{baruw0} hold, together with Lemma~\ref{lem:comparison-ancient} (ii). As this part of the argument is precisely the same as in the proof of Theorem~\ref{th:blocking-localized} in the previous subsection, we omit the details. The proof of Theorem~\ref{th:blocking-periodic} in the second approach is complete. 
\end{proof}


\section{Proofs for propagation}\label{s:propagation}

In this section, we prove Theorems \ref{th:large-holes}, \ref{th:small-capacity} and \ref{th:parallel} on the propagation of fronts for the following three types of walls:
\begin{itemize}
\item walls with large holes (Theorem~\ref{th:large-holes});
\item small-capacity walls (Theorem~\ref{th:small-capacity});
\item parallel-blade walls (Theorem~\ref{th:parallel}).
\end{itemize}


\subsection{Walls with large holes: proof of Theorem~\ref{th:large-holes}}\label{ss:proof-large-holes}

\begin{proof}[Proof of Theorem \ref{th:large-holes}]
Let $R_0$ be the constant defined in \eqref{R0} and let $\Psi$ be the solution of~\eqref{Psi0} for $R=R_0$. For each point $P\in\Omega$ with $d(P,K)\geq R_0$ (here $d(P,K)$ denotes the distance between $P$ and the set $K$), we define a function $\Psi^P$ on $\overline{\Omega}$ as follows: 
\begin{equation}\label{Psi-P}
\Psi^P(x)=
\begin{cases}
\, \Psi(x-P)\ \ & \hbox{if}\ \ |x-P|\leq R_0,\\
\, 0\ \ & \hbox{otherwise}.
\end{cases}
\end{equation}
Then $\Psi^P$ is continuous on $\overline{\Omega}$ and satisfies 
\[
0\leq \Psi^P(x)<1\ \ (x\in\overline{\Omega}),\quad \ \ \max \Psi^P=\Psi(0)>\alpha,
\]
\[
\Delta\Psi^P+f(\Psi^P)=0\quad (|x-P|<R_0).
\]
Clearly, for any $P\in\Omega$ with $d(P,K)\leq R_0$, $\Psi^P$ is a subsolution of \eqref{E}. 

Since $\bar{v}\to 1$ as $x_1\to-\infty$, and since $\max \Psi^P=\Psi(0)<1$, we have
\[
\Psi^P(x)<\bar{v}(x)
\]
if the $x_1$ coordinate of $P$ is sufficiently negative. Choose such $P$ and call it $P_0$. Next let $P(s)=(1-s)P_0+sP_1$ be the interpolation point between $P_0$ and $P_1$. As $s$ varies from $0$ to $1$ continuously, the graph of $\Psi^{P(s)}$ slides along the line segment $P_0P_1$. By the strong maximum principle and the fact that $\bar{v}>0$ and that $\Psi^{P(s)}$ is a compactly supported subsolution, we see that $\Psi^{P(s)}$ remains below $\bar{v}$ all the way to $s=1$. Hence $P_1<\bar{v}$.

Next we move $P$ continuously along the curve $\gamma$, from $P_1$ to $P_2$. Then, since the distance between each point on $\gamma$ and $K$ is bounded from below by $R_0$, $\Psi^P$ is a compactly supported subsolution. Thus, again by the strong maximum principle, $\Psi^P$ remains below $\bar{v}$ all the way to $P=P_2$, hence $\Psi^{P2}<\bar{v}$. Finally, let $Q$ be an arbitrary point in the region $\{x_1\geq M+R_0\}$ and connect $P_2$ and $Q$ by a continuous curve $\gamma_2$ that is outside the $R_0$-neighborhood of $K$. Then, by the same argument as above we see that $\Psi^{Q}<\bar{v}$. Consequently we have
\[
\bar{v}(x)>\Psi(0)>\alpha \quad\hbox{for all}\ \ x\in \{x_1\geq M+R_0\}.
\]
By a simple comparison argument, we easily see that this implies $\bar{v}\to 1$ as $x_1\to+\infty$ since $f(s)>0$ for $\alpha<s<1$. The proof of Theorem~\ref{th:large-holes} is complete.
\end{proof}


\subsection{Small capacity walls: proof of Theorem~\ref{th:small-capacity}}\label{ss:proof-small-capacity}

\begin{proof}[Proof of Theorem \ref{th:small-capacity}]
Suppose the contrary. Then there exist positive numbers $\ep_j\to 0$ such that blocking occurs for $K^{\ep_j}\,(j=1,2,\ldots)$. Let $v_j$ denote the limit profile for $K^{\ep_j}$ and let $v_j\to v_\infty$ (after taking a subsequence). Then $v_\infty$ satisfies 
\[
\Delta v_\infty+f(v_\infty)=0 \ \ \hbox{in}\ \ \R^N\setminus ({\mathcal K}_1\cup {\mathcal K}_0).
\]
Since ${\mathcal K}_0$ has capacity $0$ and $v_\infty$ is bounded, ${\mathcal K}_0$ is a removable singularity. Therefore
\[
\Delta v_\infty+f(v_\infty)=0 \ \ \hbox{in}\ \ \R^N\setminus{\mathcal K}_1.
\]
By the assumption, ${\mathcal K}_1$ has a passage of width larger than or equal to $R_0$. Therefore, as in the proof of Theorem~\ref{th:large-holes}, we see that $v_\infty \to 1$ as $x_1\to+\infty$. This, however, contradicts the statement of Corollary~\ref{cor:blocking}. This contradiction proves the theorem.
\end{proof}


\subsection{Parallel-blade walls: proof of Theorem~\ref{th:parallel}}\label{ss:proof-parallel}

The proof of propagation for this type of wall is based on a rather non-standard comparison argument using ``quasi-subsolutions'', the meaning of which will be explained below. In what follows, $\bar{v}^\ep$ will denote the limit profile corresponding to the wall $K^\ep$. We want to show that $\bar{v}^\ep(x)\to 1$ as $x_1\to+\infty$ if $\ep$ is sufficiently small.

We introduce a function $\rho=\rho(x_1)$ on $\R$ that 
will serve as the basis of our argument. Let $\delta>0$ be a small constant such that $s\mapsto f(s)-\delta$ satisfies
\[
\int_0^b \left(f(s)-\delta\right)ds >0,
\]
where $b$ is the stable zero of the function $s\mapsto f(s)-\delta$ such that $\alpha<b<1$. We define a function $\rho=\rho(x_1)$ on $\R$ by
\begin{equation}\label{rho}
\left\{\begin{array}{ll}
\rho''+f(\rho)= \delta\ \ &\hbox{if}\ \ x_1< 0\\[4pt]
\rho>0,\ \ \rho'<0 & \hbox{if}\ \ x_1< 0\\[4pt]
\rho(x_1)=0 \quad & \hbox{if}\ \ x_1\geq 0\\[4pt]
\rho(x_1)\to b & \hbox{as}\ \ x_1\to -\infty.
\end{array}\right.
\end{equation}
Such a function $\rho$ exists and is unique as long as $\delta>0$ is sufficiently small. This is easily seen by a shooting argument for the ordinary differential equation $u''+f(u)=\delta$, and we omit the details.

Next, for each $\lambda\in\R$, we define a function $\rho^\lambda$ on $\R^N$ by 
\[
\rho^\lambda(x):=\rho(x_1-\lambda).
\]
The support of $\rho^\lambda$ is the half space $\{x_1\leq \lambda\}$.  Since $\bar v(x)\to 1$ as $x_1\to-\infty$ and  $\sup \rho^\lambda=b<1$, it follows that for all sufficiently large $\lambda<0$ the function $\rho^\lambda$ is below $\bar{v}^\ep$, i.e.
\begin{equation}\label{rho<v}
\rho^\lambda(x)\leq \bar{v}^\ep(x) \ \ (x\in\Omega).
\end{equation}
We would like to show that this inequality continues to hold for all $\lambda$ by a sliding type argument. However, this is not necessarily true in general. Indeed, the function $\rho^\lambda$ is not exactly a subsolution of \eqref{E} for large $\lambda>0$ because it does not satisfy the appropriate boundary conditions $\partial_\nu\rho^\lambda\leq 0$ on some part of the boundary of $K^\ep$. Consequently, the inequality \eqref{rho<v} may not hold for all $x\in\Omega$ if $\lambda>0$. Instead, we are going to show that the region where $\rho^\lambda(x)$ is larger than $\bar{v}^\ep(x)$ remains small. More precisely, define
\[
D^\lambda:=\{x\in\overline{\Omega}\mid \rho^\lambda(x)>\bar{v}^\ep(x)\}\cap \Delta_{\mathcal P}
\]
We shall now prove that the volume of $D^\lambda$ remains small for all $\lambda>0$ provided $\ep$ is sufficiently small, which implies that \eqref{rho<v} ``nearly'' holds. 

Let $w^\lambda:=\rho^\lambda-\bar{v}^\ep$. Then $w^\lambda$ is of class $C^2$ on $\Omega$ and satisfies
\begin{equation}\label{w-lambda}
\left\{
\begin{array}{ll}
\Delta w^\lambda +f'(\xi(x))w^\lambda= \delta,  \ \ &\hbox{in}\ \ \Omega\\[4pt]
w^\lambda>0 \ \ &\hbox{in}\ \ D^\lambda,\\[4pt]
\displaystyle
w^\lambda=0, \ \ &\hbox{on}\ \ \partial D^\lambda\cap\Omega,\\[7pt]
\displaystyle
\frac{\partial w^\lambda}{\partial\nu}=\frac{\partial \rho^\lambda}{\partial\nu}\ \ \ &\hbox{on}\ \ \partial D^\lambda\cap \partial K^\ep.
\end{array}\right.
\end{equation}
As for the boundary conditions on $\partial\Delta_{\mathcal P}$ it is understood that we are dealing with ${\mathcal P}$-periodic functions and we will see that these conditions do not affect the computations that follow. 

\begin{Lemma}\label{wlambda=0}
The set $S(\lambda):=\{x\in\Omega\mid w^\lambda(x)=0\}\cap \Delta_{\mathcal P}$ has Lebesgue measure zero.
\end{Lemma}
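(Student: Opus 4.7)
The plan is to argue by contradiction: suppose $|S(\lambda)|>0$ and derive that the constant $\delta$ on the right-hand side of the equation for $w^\lambda$ in \eqref{w-lambda} must vanish. The key observation is that $w^\lambda$ satisfies a \emph{nondegenerate} inhomogeneous linear elliptic equation $\Delta w^\lambda+c(x)w^\lambda=\delta$ with $c(x):=f'(\xi(x))\in L^\infty$, so if $w^\lambda$ vanishes on a set of positive measure then both the zeroth and the second-order terms vanish there, producing $0=\delta$.

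To implement this, first I would notice that, away from the hyperplane $\{x_1=\lambda\}$ (where $\rho^\lambda$ may fail to be $C^2$, but which is a Lebesgue-null set in $\R^N$), the function $w^\lambda$ is $C^2$ in $\Omega$ as the difference of two smooth functions. Hence it suffices to work on $S(\lambda)\setminus\{x_1=\lambda\}$, still of positive measure under our contradiction hypothesis. The main tool is the classical Stampacchia lemma: if $u\in W^{1,p}_{\mathrm{loc}}$ then $\nabla u=0$ almost everywhere on the zero set $\{u=0\}$. Applied to $w^\lambda$ itself, this yields $\nabla w^\lambda=0$ a.e.\ on $S(\lambda)$, so each partial derivative $\partial_i w^\lambda$ vanishes a.e.\ on $S(\lambda)$. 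A second application of Stampacchia to $\partial_i w^\lambda\in W^{1,p}_{\mathrm{loc}}$ then gives $\nabla(\partial_i w^\lambda)=0$ a.e.\ on the zero set of $\partial_i w^\lambda$, which contains $S(\lambda)$ up to null sets. Summing over $i$ produces $\Delta w^\lambda=0$ a.e.\ on $S(\lambda)$.

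Combining this with $w^\lambda=0$ on $S(\lambda)$, the first line of \eqref{w-lambda} evaluated almost everywhere on $S(\lambda)$ reduces to $0=\delta$, contradicting the choice $\delta>0$ made before \eqref{rho}. Therefore $|S(\lambda)|=0$, proving the lemma.

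The only point that calls for some care is the justification of the double application of Stampacchia's lemma, i.e.\ the fact that the second derivatives of $w^\lambda$ vanish a.e.\ on $\{w^\lambda=0\}$; this is a standard bootstrap of the first-order statement and is available because the interior $C^2$ (indeed $C^\infty$) regularity of $w^\lambda$ off $\{x_1=\lambda\}$ comfortably embeds it into $W^{2,p}_{\mathrm{loc}}$ for any $p<\infty$. Once this is in hand, no further ingredient is needed, and in particular the boundary conditions on $\partial K^\ep$ and the periodicity in $y$ play no role in the argument.
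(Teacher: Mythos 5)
Your proof is correct, but it takes a genuinely different route from the paper. The paper argues via the Lebesgue density theorem and a geometric observation: at a density point $x^*$ of $S(\lambda)$ one must have $\nabla w^\lambda(x^*)=0$ (else $S(\lambda)$ would locally be a smooth hypersurface, of vanishing density), and then $\Delta w^\lambda(x^*)=\delta>0$ forces $w^\lambda>0$ in a small cone with vertex $x^*$, which again violates the density-one condition. You instead invoke Stampacchia's lemma twice — once for $w^\lambda$, to get $\nabla w^\lambda=0$ a.e.\ on $S(\lambda)$, and once more for each $\partial_i w^\lambda$, to get $\Delta w^\lambda=0$ a.e.\ on $S(\lambda)$ — and then read off $\delta=0$ directly from the PDE. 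Your version dispenses with the geometric cone argument and delivers the contradiction purely algebraically; the price is the double Stampacchia bootstrap, which needs $w^\lambda\in W^{2,p}_{\rm loc}$ near $S(\lambda)$. That is available here because $\bar v^\ep$ and $\rho$ are smooth on their respective domains, and $S(\lambda)$ is in fact entirely contained in $\{x_1<\lambda\}$ (since $\rho^\lambda\equiv0$ and hence $w^\lambda=-\bar v^\ep<0$ for $x_1\ge\lambda$), which is precisely the region where $w^\lambda$ is smooth and where the stated equation $\Delta w^\lambda+f'(\xi)w^\lambda=\delta$ actually holds. It would be worth making this last containment explicit rather than merely discarding the null hyperplane $\{x_1=\lambda\}$: the PDE in \eqref{w-lambda} with right-hand side $\delta$ is really an equation on $\{x_1<\lambda\}$ (for $x_1>\lambda$ the right-hand side is $0$), so the argument needs $S(\lambda)\subset\{x_1<\lambda\}$, not merely $|S(\lambda)\cap\{x_1=\lambda\}|=0$. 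With that clarification, your proof is sound and somewhat cleaner in its measure-theoretic bookkeeping than the paper's.
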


\begin{proof}
Suppose that $S(\lambda)$ has a positive Lebesgue measure. Then by the Lebesgue density theorem, almost every point $x^*$ of $S(\lambda)$ is a density point, in the sense that
\begin{equation}\label{Lebesgue}
\lim_{r\to 0}\frac{|S(\lambda)\cap B_r(x^*)|}{|B_r(x^*)|}=1,
\end{equation}
where $B_r(x^*)$ denotes a ball of radius $r$ centered at $x^*$. This implies $\nabla w^\lambda(x^*)=0$, since otherwise $S(\lambda)$ would be a smooth hypersurface around $x^*$, thus \eqref{Lebesgue} would not hold. 

Next, since $\Delta w^\lambda=\delta>0$ at $x^*$, there exists a unit vector ${\boldsymbol e}$ such that
\[
\frac{d^2}{ds^2} w^\lambda(x^*+s{\boldsymbol e})>0.
\]
This, together with $w^\lambda(x^*)=0$ and $\nabla w^\lambda(x^*)=0$, implies that $w^\lambda(x)>0$ in the intersection of a small neighborhood of $x^*$ and the interior of a dual cone with vertex at $x^*$. This again contradicts \eqref{Lebesgue}. Therefore \eqref{Lebesgue} never holds, hence $S(\lambda)$ has Lebesgue measure zero.
\end{proof}

\begin{Cor}\label{cor:D-lambda-conti}
$|D^\lambda|$ is increasing and continuous in $\lambda$. 
\end{Cor}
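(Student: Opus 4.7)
The plan is to split the statement into monotonicity and continuity and handle them separately, with the continuity argument invoking Lemma~\ref{wlambda=0} in a decisive way.

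For monotonicity, I will use the fact that the one-variable profile $\rho$ is non-increasing on $\R$ (strictly decreasing on $(-\infty,0)$ by \eqref{rho}, and identically zero on $[0,+\infty)$). Hence for any fixed $x$ the map $\lambda\mapsto\rho^\lambda(x)=\rho(x_1-\lambda)$ is non-decreasing, so $w^\lambda(x)=\rho^\lambda(x)-\bar v^\epsilon(x)$ is non-decreasing in $\lambda$ pointwise. This gives $D^{\lambda_1}\subset D^{\lambda_2}$ whenever $\lambda_1<\lambda_2$, and monotonicity of $|D^\lambda|$ follows at once.

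Before addressing continuity, I will observe that each $D^\lambda$ has finite Lebesgue measure. Since $\bar v^\epsilon(x_1,y)\to 1$ as $x_1\to-\infty$ uniformly in $y$ (by \eqref{vbar-1}), and $\sup\rho^\lambda=b<1$, there exists $X_0\in\R$ independent of $\lambda$ such that $\bar v^\epsilon>b\geq \rho^\lambda$ whenever $x_1<X_0$. Combined with the fact that $\rho^\lambda$ is supported in $\{x_1\leq\lambda\}$, this gives $D^\lambda\subset\{X_0\leq x_1\leq\lambda\}\cap\Delta_{\mathcal P}$, a set of finite measure since $\mathcal C_{\mathcal P}$ is bounded in $\R^{N-1}$.

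For left continuity at $\lambda$, I will note that if $w^\lambda(x)>0$ then by continuity of $\mu\mapsto\rho^\mu(x)$ we have $w^\mu(x)>0$ for $\mu$ close to $\lambda$, so $\bigcup_{\mu<\lambda}D^\mu=D^\lambda$, and continuity of measure from below gives $|D^\mu|\nearrow|D^\lambda|$. For right continuity, I will show $\bigcap_{\mu>\lambda}D^\mu\subset D^\lambda\cup S(\lambda)$, where $S(\lambda)=\{w^\lambda=0\}\cap\Delta_{\mathcal P}$ has measure zero by Lemma~\ref{wlambda=0}. The delicate point is that any $x$ with $\rho^\lambda(x)>0$ satisfies $x_1-\lambda<0$, where $\rho'<0$, so $\mu\mapsto\rho^\mu(x)$ is \emph{strictly} increasing at $\mu=\lambda$; at points with $\rho^\lambda(x)=0$ one has $w^\lambda(x)=-\bar v^\epsilon(x)<0$, so these points are not in $D^\mu$ for $\mu$ sufficiently close to $\lambda$. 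Therefore membership in $\bigcap_{\mu>\lambda}D^\mu$ forces $w^\lambda(x)\geq 0$, whence the inclusion $\bigcap_{\mu>\lambda}D^\mu\subset\{w^\lambda\geq 0\}\cap\Delta_{\mathcal P}=D^\lambda\cup S(\lambda)$. Since $D^{\lambda+1}$ has finite measure, continuity of measure from above applies and yields $|D^\mu|\searrow|D^\lambda|$.

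The only place where something substantive is used is the right-continuity step, where without Lemma~\ref{wlambda=0} one would only be able to sandwich the limit between $|D^\lambda|$ and $|D^\lambda|+|S(\lambda)|$; the lemma disposes of the latter summand. The rest is bookkeeping with monotone limits of measurable sets.
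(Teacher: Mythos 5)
Your proof is correct and follows essentially the same route as the paper: pointwise monotonicity of $w^\lambda$ in $\lambda$ gives monotonicity of $D^\lambda$, continuity from below handles the left limit, and continuity from above combined with Lemma~\ref{wlambda=0} (that $S(\lambda)$ is null) handles the right limit. You are more explicit than the paper about the finite-measure hypothesis required for continuity from above, which is a worthwhile clarification, while the case split on $\rho^\lambda(x)>0$ versus $\rho^\lambda(x)=0$ in the right-continuity step is a detour — the inclusion $\bigcap_{\mu>\lambda}D^\mu\subset\{w^\lambda\geq0\}\cap\Delta_{\mathcal P}$ already follows directly from the continuity of $\mu\mapsto w^\mu(x)$.
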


\begin{proof}
Since $w^\lambda(x)=\rho^\lambda(x)-\bar{v}^\ep(x)$ is increasing in $\lambda$ for each fixed $x\in\Omega$, the set $D^\lambda$ is increasing in $\lambda$, hence so is $|D^\lambda|$. To prove continuity, observe that $w^\lambda(x)$ is continuous and increasing in $\lambda$ for each fixed $x$. Hence, for each $\lambda_0>0$,
\[
D^{\lambda_0}=\lim_{\lambda\to\lambda_0-0}D^\lambda=\bigcup_{0<\lambda<\lambda_0} D^\lambda.
\]
Consequently, we have
\[
|D^{\lambda_0}|=\lim_{\lambda\to\lambda_0-0}|D^\lambda|.
\]
Next, for each $\lambda_0\geq 0$, the continuity and monotonicity of $w^\lambda$ implies
\[
\lim_{\lambda\to\lambda_0+0}|D^\lambda|=|D^{\lambda_0}\cup S(\lambda_0)|.
\]
Since $S(\lambda_0)$ is a set of measure zero by Lemma~\ref{wlambda=0}, we have
\[
\lim_{\lambda\to\lambda_0+0}|D^\lambda|=|D^{\lambda_0}|.
\]
This establishes the continuity of $\lambda\mapsto |D^\lambda|$.
\end{proof}

Now we are ready to prove the following estimate on the volume of $D^\lambda$.

\begin{Prop}\label{prop:estimateD}
For any given constant $\nu>0$, there exists $\ep_0= \ep_0(\nu)$, also depending on $f$, such that if assumptions \eqref{P1}--\eqref{P2} are satisfied for any $\ep \leq \ep_0$, we have
\begin{equation}\label{estimateD}
|D^\lambda| \, \leq \, \nu, \quad\text{for all} \quad \lambda\in\R.
\end{equation}
\end{Prop}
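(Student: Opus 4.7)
The argument is by contradiction, combining two integral identities for $w=w^\lambda$ with the refined Poincaré inequality of Lemma~\ref{lem:poincare}. Observe first that $|D^\lambda|=0$ for $\lambda$ sufficiently negative: since $\rho^\lambda$ is supported in $\{x_1<\lambda\}$ and takes values in $[0,b]$ with $b<1$, while $\bar v^\ep(x_1,y)\to 1$ as $x_1\to-\infty$, one has $\rho^\lambda<\bar v^\ep$ everywhere in $\Omega$ once $\lambda$ is chosen negative enough. Combined with Corollary~\ref{cor:D-lambda-conti}, this means that if \eqref{estimateD} fails there must exist $\lambda^*\in\R$ with $|D^{\lambda^*}|=\nu$. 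Write $w=w^{\lambda^*}$ and $D=D^{\lambda^*}$.

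The first identity is obtained by multiplying the equation $\Delta w+f'(\xi)w=\delta$ in~\eqref{w-lambda} by $w$ and integrating over $D$. The contributions on $\partial D\cap\Omega$ vanish ($w=0$ there), those on $\partial\Delta_{\mathcal P}$ cancel by $\mathcal P$-periodicity, and on $\partial D\cap\partial K^\ep$ one has $\partial_\nu w=\partial_\nu\rho^{\lambda^*}=\rho'(x_1-\lambda^*)\,(\nu\cdot{\boldsymbol e}_1)$, so by~\eqref{P2} this boundary term is $O(\ep_1)$. The identity reads
\[
\int_D|\nabla w|^2+\delta\int_D w\le C\ep_1+\|f'\|_\infty\int_D w^2.
\]
The refined Poincaré inequality applied to $w_+$, which vanishes on $\partial D\cap\Omega$ and is supported in a set of measure $\nu$, furnishes $\int_D w^2\le\eta(\nu)\int_D|\nabla w|^2+O(\ep_1)$, with $\eta(\nu)\to 0$ as $\nu\to 0$. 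Fixing $\nu$ small enough that $\|f'\|_\infty\eta(\nu)\le 1/2$ allows us to absorb the $w^2$ term on the left, yielding $\int_D|\nabla w|^2=O(\ep_1)$ and hence $\int_D w^2=O(\ep_1)$.

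The second identity comes from multiplying the same equation by the constant $1$ and integrating over $D$. On $\partial D\cap\Omega$ the outward normal derivative satisfies $\partial_\nu w\le 0$ since $w$ reaches its zero value from the positive side (Lemma~\ref{wlambda=0} guarantees the exceptional set of critical points is negligible); the $\partial K^\ep$-flux is again $O(\ep_1)$ by~\eqref{P2}. This gives $\delta|D|\le C\ep_1+\|f'\|_\infty\int_D w$, and Cauchy--Schwarz combined with the $L^2$ bound above produces
\[
\delta\nu \le C\ep_1+C'\sqrt{|D|}\,\|w\|_{L^2(D)} = C\ep_1+C''\sqrt{\nu\,\ep_1}.
\]
With $\nu>0$ fixed, the right-hand side tends to $0$ as $\ep\to 0$, contradicting $\delta\nu>0$.

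The main obstacle is to formulate and invoke the refined Poincaré inequality (Lemma~\ref{lem:poincare}) in a form that yields the $\eta(\nu)\to 0$ estimate \emph{uniformly} in small $\ep$, despite the fact that $w_+$ does not vanish on the wall-portion $\partial D\cap\partial K^\ep$ of its support's boundary; it is precisely the surface smallness~\eqref{P2} that lets the non-Dirichlet boundary contribute only an $O(\ep_1)$ error. A careful geometric analysis of how the parallel-blade domain $\Omega\cap\Delta_{\mathcal P}$ approximates its limit configuration as $\ep\to 0$ is required so that the Poincaré constant $\eta(\nu)$ can be chosen independent of $\ep$.
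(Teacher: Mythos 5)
Your proposal follows the same overall strategy as the paper: two integral identities obtained from testing the linearized equation against (roughly) the indicator of $D^\lambda$ and against $w^+$, the refined Poincar\'e inequality of Lemma~\ref{lem:poincare} to absorb the $L_f\int w^2$ term, and the continuity of $\lambda\mapsto|D^\lambda|$ together with $|D^\lambda|=0$ for $\lambda$ sufficiently negative to preclude any intermediate value. The order in which you chain the two identities is swapped relative to the paper (you absorb the $\int w^2$ term via Poincar\'e first, then feed the resulting $O(\ep_1)$ bounds into the second identity), but this is an equivalent arrangement of the same ingredients, and your contradiction framing is logically the same as the paper's direct ``no value between $C_1(\ep+\ep_1)$ and $\eta_1$'' dichotomy.

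Two technical points deserve care. First, you integrate the PDE over $D=\{w>0\}$ directly and invoke the divergence theorem on $\partial D$, citing Lemma~\ref{wlambda=0} to claim the set of critical points is negligible. That lemma only shows $\{w=0\}$ has zero $N$-dimensional Lebesgue measure; it gives no control on the $(N-1)$-dimensional regularity of $\partial D$, so the divergence theorem over $D$ is not justified as stated. The paper sidesteps this by testing against the Lipschitz truncation $\chi_\eta(w)$ on the fixed regular domain $R^\lambda$ and letting $\eta\to0$; you should adopt that device (or test against $w^+\in H^1(R^\lambda)$, as in the second identity). Second, your closing paragraph attributes the error incurred in the Poincar\'e step to the \emph{surface} smallness~\eqref{P2}, but the Poincar\'e lemma imposes no boundary condition on $\partial\widehat\Omega$, so the fact that $w^+$ is nonzero on $\partial D\cap\partial K^\ep$ is harmless. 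What actually generates the error is that $w^+$ must be restricted to $\R^N\setminus\bigl([0,M]\times{\mathcal N}_\ep(\Sigma)\bigr)$ before Lemma~\ref{lem:poincare} can be applied; the excised volume is $O(\ep)$ by~\eqref{P1}, and this is the source of the $O(\ep)$ correction (the paper's passage from $D^\lambda$ to $\widehat D^\lambda$). Assumption~\eqref{P2} controls only the boundary integrals of $\partial_\nu\rho^\lambda$ arising from Green's formula, not the Poincar\'e restriction. Since both errors tend to $0$ with $\ep$, the final contradiction survives, but the bookkeeping of which hypothesis controls which error should be corrected.
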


\begin{proof}
Let $\lambda_0<0$ be such that $\bar{v}^\ep(x_1,y)\geq b$ for all $x_1\leq \lambda_0$ and all $y$. For $\lambda>\lambda_0$, define
\[ 
R^\lambda =\Omega \cap \{ (x_1, y) \in \Delta_{\mathcal P} , \; \lambda_0< x_1 < \lambda \}.
\]
The difference $w^\lambda$ satisfies a linear equation 
\begin{equation}\label{eqnw}
\Delta w^\lambda + q(x) w^\lambda = \delta \quad\text{in} \quad R^\lambda,
\end{equation}
for some function $q$  bounded by the Lipschitz norm of $f$:
\[
| q(x)| \leq L_f:= || f' ||_{L^\infty(0,1)}, \quad\text{for all}\quad x\in \Omega^\lambda.
\]

For each $\eta >0$, let $\chi_\eta$ be defined by:
\begin{equation*}
\chi_\eta(s) =
    \begin{cases}
    0,\quad \text{if} \quad s\leq 0,\\
    s/\eta,\quad \text{if} \quad 0\leq s\leq \eta,\\
    1,\quad \text{if} \quad s\geq \eta.\\
    \end{cases}
\end{equation*}
Multiply equation~\eqref{eqnw} by $\chi_\eta(w^\lambda)$ and integrate by parts on $R^\lambda$ to get:
\begin{equation}
\delta \int_{R^\lambda} \chi_\eta(w^\lambda) = - \int_{R^\lambda} \chi'(w^\lambda) |\nabla w^\lambda|^2 + \int_{\partial R^\lambda}\frac{\partial w^\lambda}{\partial\nu} \chi_\eta(w^\lambda)\,dS_x + \int_{R^\lambda} q(x) w^\lambda \chi_\eta(w^\lambda).  
\end{equation}
Observe that $w^\lambda \leq 0$ on $x_1= \lambda_0$ and $x_1 = \lambda$, as well as on $\partial R^\lambda\setminus K^\ep$, and that there is no contribution of the boundary of $\Delta_{\mathcal P}$ because the functions are periodic. Thus we get
\[
\int_{\partial R^\lambda}\frac{\partial w^\lambda}{\partial\nu} \chi_\eta(w^\lambda)\,dS_x =
\int_{\partial R^\lambda \cap K^\epsilon }\frac{\partial \rho^\lambda}{\partial\nu} \chi_\eta(w^\lambda)\,dS_x.
\]
Since $\partial \rho^\lambda/\partial \nu=\nabla \rho^\lambda\cdot \nu=\rho'(x_1-\lambda)\hspace{1pt}{\boldsymbol e}_1\cdot \nu$, the condition \eqref{P2} implies
\[
\int_{\partial R^\lambda\cap\partial K^\ep}\Big|\frac{\partial \rho^\lambda}{\partial\nu}\Big|\,dS_x\leq  \ep_1\|\rho'\|_{L^\infty}.
\]

Denoting $\|\rho'\|_{L^\infty}$ by $k$ this, together with $\chi_\eta(w^\lambda)=0$ when $w^\lambda\le0$, yields
\[
\delta \int_{R^\lambda} \chi_\eta(w^\lambda) \leq L_f \int_{D^\lambda} w^\lambda + k \,\ep_1.
\]
Then, letting $\eta\to 0$, we obtain
\begin{equation} \label{estD1}
\delta|D^\lambda|\leq L_f \int_{D^\lambda} w^\lambda + k \,\ep_1.
\end{equation}

To derive the second estimate we need, we multiply equation~\eqref{eqnw} by $w^+$ (to simplify notations, we now write $w$ instead of $w^\lambda$). Integration over $R^\lambda$ and Green's formula yield:
\[
\begin{split}
\delta \int_{D^\lambda} w \, & \, \leq  \int_{\partial R^\lambda}w^+ \, \frac{\partial w} {\partial\nu}\,dS_x
-\int_{D^\lambda}\left|\nabla w \right|^2 +L_f\int_{D^\lambda} w^2 \\
&=\int_{\partial R^\lambda\cap\partial K^\ep}w \frac{\partial \rho^\lambda}{\partial\nu}\,dS_x-\int_{D^\lambda}\left|\nabla w \right|^2 +L_f\int_{D^\lambda} w^2 
\end{split}
\]
where we have used the fact that $\int_{R^\lambda} \nabla w \cdot \nabla w^+\, =\, \int_{D^\lambda}\left|\nabla w \right|^2$. Again using condition \eqref{P2}, we obtain
\[
\delta \int_{D^\lambda}w \,\leq \,
-\int_{D^\lambda}\left|\nabla w\right|^2 +L_f\int_{D^\lambda} w^2 + k\,\ep_1.
\]
Combining this with \eqref{estD1} yields
\begin{equation}\label{D-lambda2}
|D^\lambda|\leq \delta^{-2}L_f\left(-\int_{D^\lambda}\left|\nabla w\right|^2 +L_f\int_{D^\lambda} w^2 \right)+ k \delta^{-1} (1 +L_f \delta^{-1}) \, \ep_1.
\end{equation}

Next we set $\widehat D^\lambda:=D^\lambda\setminus\left( [0,M]\times {\mathcal N}_{\ep}(\Sigma)\right)$. Then
\[
-\int_{D^\lambda}\left|\nabla w\right|^2 +L_f\int_{D^\lambda}w^2 \leq
-\int_{\widehat{D}^\lambda}\left|\nabla w\right|^2 +L_f\int_{\widehat D^\lambda}w^2 +C_\Sigma L_f \,\ep,
\]
where $C_\Sigma$ is a constant depending on $\Sigma$. This and \eqref{D-lambda2} yield
\[
|D^\lambda|\leq \delta^{-2} L_f\left(- \int_{\widehat D^\lambda}\left|\nabla w\right|^2 + L_f\int_{\widehat D^\lambda} w^2 \right)+C_1(\ep+ \ep_1),
\]
where the constant $C_1$ involves the various other parameters $\delta, L_f, k, C_\Sigma$ which are fixed and in particular do not depend on $\ep$.

By Lemma \ref{lem:poincare} below, there exists a constant $C>0$, independent of $\ep$, such that:
\[
|D^\lambda|\leq \delta^{-2}L_f \left(-C |\widehat{D}^\lambda|^{-2/N}+L_f\right)\int_{\widehat D^\lambda}w^2 +C_1(\ep+\ep_1)
\]
so long as $|\widehat{D}^\lambda|\leq \eta_0$. Choose $\eta_1>0$ sufficiently small such that $C \eta_1^{-2/N}\geq L_f$. Then we have
\[
|D^\lambda|\leq \eta_1 \ \Longrightarrow \ |D^\lambda|\leq C_1(\ep+\ep_1).
\]
Since, by the assumption in \eqref{P2}, $\ep_1=\ep_1(\ep)$ tends to $0$ as $\ep\to 0$, there exists $\ep^*$ such that  
\[
\eta_1>C_1(\ep+\ep_1) \ \ \ \hbox{for all} \ \ep\in(0,\ep^*].
\]
Thus, if $\ep\in(0,\ep^*]$, we have
\[
|D^\lambda|\leq \eta_1 \ \Longrightarrow \ |D^\lambda|\leq C_1(\ep+\ep_1) \ \big(\,<\eta_1\,),
\]
which implies that $|D^\lambda|$ cannot take a value between $C_1(\ep+\ep_1)$ and $\eta_1$. Now, as we increase the value of $\lambda$, the value of $|D^\lambda|$ for $\lambda\leq 0$ equals $0$ and it depends on $\lambda>0$ continuously by Corollary~\ref{cor:D-lambda-conti}. Consequently, we have
\[
|D^\lambda|\leq C_1(\ep+\ep_1) \ \ \hbox{for all}\ \ \lambda\in\R,
\]
provided that $0<\ep<\ep^*$. Since $C_1(\ep+\ep_1)$ can be arbitrarily small if $\ep$ is chosen small, the proof of Proposition~\ref{prop:estimateD} is complete.
\end{proof}

\begin{proof}[Completion of the proof of Theorem~\ref{th:parallel}]
The inequality \eqref{estimateD} implies that the limit profile $\bar v^\ep$ cannot tend to $0$ as $x_1\to+\infty$, since otherwise we would have $|D^\lambda|\to+\infty$ as $\lambda\to+\infty$. Hence propagation occurs for $K^\ep$ so long as $0<\ep<\ep^*$. The theorem is proved.
\end{proof}

Here is the lemma we used in the above proof. It will be proved in Section \ref{s:poincare} of Appendix.

\begin{Lemma}\label{lem:poincare}
There exist a constant $\eta_0>0$ that is independent of $\ep$ and a constant $C>0$, depending on $\eta_0$, but again independent of $\ep$, such that, for any nonnegative $w\in H^1_{\mathcal P}(\R^N\setminus \left( [0,M]\times {\mathcal N}_\ep(\Sigma)\right)$ the following inequality holds
\begin{equation}\label{poincare}
\int_{supp(w)\cap\Delta_{\mathcal P}}|\nabla w|^2 dx \geq C\, |supp(w)\cap\Delta_{\mathcal P}|^{-2/N} \int_{supp(w)\cap\Delta_{\mathcal P}}w^2 dx
\end{equation}
so long as $|supp(w)\cap\Delta_{\mathcal P}|\leq \eta_0$.
\end{Lemma}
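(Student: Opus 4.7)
The plan is to derive (\ref{poincare}) from the classical Sobolev embedding $H^1 \hookrightarrow L^{2^*}$ (with $2^* = 2N/(N-2)$ for $N \geq 3$; for $N = 2$ any $L^q$ embedding with finite $q > 2$ plays the same role) combined with H\"older's inequality. Writing $A := supp(w) \cap \Delta_{\mathcal P}$, H\"older gives
\[
\int_A w^2\,dx \leq |A|^{2/N}\,\|w\|_{L^{2^*}(\Delta_{\mathcal P})}^2,
\]
so the lemma will follow as soon as one establishes
\[
\|w\|_{L^{2^*}(\Delta_{\mathcal P})} \leq C\,\|\nabla w\|_{L^2(\Omega_\ep\cap\Delta_{\mathcal P})}
\]
with $C$ independent of $\ep$, for ${\mathcal P}$-periodic $w$ compactly supported in $x_1$, where $\Omega_\ep := \R^N \setminus([0,M]\times{\mathcal N}_\ep(\Sigma))$.

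The central step is to construct a uniform-in-$\ep$ bounded extension operator
\[
E_\ep \colon H^1_{\mathcal P}(\Omega_\ep\cap\Delta_{\mathcal P}) \longrightarrow H^1_{\mathcal P}(\Delta_{\mathcal P}),\qquad \|E_\ep\| \leq C_0,
\]
so that the Sobolev embedding on the unperforated cylinder $\Delta_{\mathcal P}$ can be applied to $\widetilde w := E_\ep w$. I would build $E_\ep$ by localization: cover a neighborhood of $\Sigma\cap{\mathcal C}_{\mathcal P}$ by finitely many charts in which, after a bi-Lipschitz change of variables with bounds depending only on the geometry of $\Sigma$, the perforation $[0,M]\times{\mathcal N}_\ep(\Sigma)$ becomes a model codimension-$2$ tube $\{|y'|<\ep\}$ around an affine subspace---or an orthogonal superposition of such tubes near transverse intersections of sheets of $\Sigma$. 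In each chart, extend $w$ into the tube by the harmonic extension of its boundary trace, assemble the local extensions via a partition of unity subordinate to the covering, and set $E_\ep w := w$ away from $\Sigma$.

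With $E_\ep$ in hand, the Sobolev embedding on $\Delta_{\mathcal P}$ applied to $\widetilde w$ yields
\[
\|w\|_{L^{2^*}(\Delta_{\mathcal P})}\leq\|\widetilde w\|_{L^{2^*}(\Delta_{\mathcal P})}\leq C_S\|\nabla\widetilde w\|_{L^2}\leq C_S C_0\,\|\nabla w\|_{L^2},
\]
and combining with the H\"older estimate above yields (\ref{poincare}) with $C = (C_S C_0)^{-2}$. The smallness threshold $\eta_0$ serves to localize $A$ inside a single periodicity cell (modulo a translation in $y$), so that the chart-and-partition-of-unity construction applies without periodic wrap-around; $\eta_0$ can then be chosen as a fixed positive constant depending only on the geometry of $\Sigma$ and the lattice ${\mathcal P}$.

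The main obstacle is the uniform-in-$\ep$ boundedness of $E_\ep$. The principal curvatures of $\partial\Omega_\ep$ are of order $1/\ep$, so the classical Sobolev extension theorems of Calder\'on, Stein and Jones a priori yield norms depending on these curvatures. The way around this is the codimension-$2$ structure of $\Sigma$: a codimension-$2$ set has zero $H^1$-capacity in dimension $N \geq 3$, which ensures that the Dirichlet energy of the harmonic extension into a tube of radius $\ep$ around a smooth $(N-2)$-manifold is controlled by the ambient $H^1$-norm on a fixed-size neighborhood of the tube, uniformly in $\ep$. Transverse intersections of $(N-2)$-sheets are handled by iterating the single-sheet construction in each transverse coordinate direction. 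For $N=2$, $\Sigma$ is a discrete set of points and ${\mathcal N}_\ep(\Sigma)$ consists of small intervals in $\R$; the analogous one-dimensional extension across a short interval, followed by the Sobolev embedding $H^1 \hookrightarrow L^q$ for any finite $q$, carries through with the same structure.
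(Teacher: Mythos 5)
Your plan correctly identifies the H\"older reduction from $L^{2^*}$ to $L^2$ with the factor $|A|^{2/N}$, and you correctly identify that the crux is a Sobolev-type inequality with an $\ep$-independent constant. But the central step you propose --- an $\ep$-uniform bounded extension operator $E_\ep\colon H^1_{\mathcal P}(\Omega_\ep\cap\Delta_{\mathcal P})\to H^1_{\mathcal P}(\Delta_{\mathcal P})$ --- cannot exist, and the reason is a misreading of the geometry of $\Sigma$. By assumption (P), $\Sigma\subset\R^{N-1}_y$ is a locally finite union of smooth $(N-2)$-dimensional manifolds, so $\Sigma$ has \emph{codimension one} in $\R^{N-1}_y$: it is a hypersurface in the $y$-space. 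Consequently $[0,M]\times\Sigma$ is an $(N-1)$-dimensional set, codimension one in $\R^N$, and $[0,M]\times\mathcal{N}_\ep(\Sigma)$ is a thin codimension-one slab of thickness $O(\ep)$ and fixed length $M$ in $x_1$ --- not a codimension-two tube. The caption of Figure~\ref{fig:parallel-blade} emphasizes exactly this: $K_0=\lim_\ep K^\ep$ has positive $(N-1)$-Hausdorff measure and hence positive capacity, which is precisely why the parallel-blade case cannot be reduced to the small-capacity Theorem~\ref{th:small-capacity}. A function $w\in H^1$ of the complement of such a blade may take essentially unrelated values on its two sides, and then no $H^1$ extension across the $O(\ep)$-thick slab can have bounded energy. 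The simplest check is $N=2$, $\Sigma$ a point: take $w=1$ just above the blade $[0,M]\times(-\ep,\ep)$ and $w=0$ just below it near the middle of the wall; any $H^1$ extension $\widetilde w$ must interpolate across a gap of width $2\ep$ over $x_1$-length of order $M$, so $\int|\nabla\widetilde w|^2\gtrsim M/\ep\to\infty$. The capacity argument you invoke (codimension-two sets have zero $H^1$-capacity for $N\ge3$) is true but inapplicable here, and your claim that the bi-Lipschitz chart turns the perforation into $\{|y'|<\ep\}$ with $y'$ two-dimensional is false: one of the transversal directions has extent $M$, not $\ep$.

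The paper avoids extension entirely. It splits $\Delta_{\mathcal P}$ into the two half-cylinders $\widehat\Omega_\pm=\Delta_{\mathcal P}\cap\{x_1<0\}$, $\Delta_{\mathcal P}\cap\{x_1>M\}$ (which do not see the perforation) and the middle piece $\widehat\Omega^{\,\ep}_0=\Delta_{\mathcal P}\cap\big((0,M)\times(\R^{N-1}_y\setminus\mathcal{N}_\ep(\Sigma))\big)$, and on each piece it applies a \emph{relative} Sobolev--Poincar\'e inequality (Proposition~\ref{prop:poincare}, from \cite{BV}) in which $w$ is required to vanish only on $\partial D\cap\widehat\Omega$, not on $\partial D\cap\partial\widehat\Omega$; this is exactly what lets the support of $w$ abut the perforation without any control on the trace there. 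The $\ep$-uniformity comes not from extension but from the stability of that inequality under perturbation of the domain: each connected component of $\widehat\Omega^{\,\ep}_0$ is a bounded Lipschitz domain that is bi-Lipschitz equivalent, with Jacobian close to the identity, to the corresponding component of $\widehat\Omega^{\,0}_0=\Delta_{\mathcal P}\cap\big((0,M)\times(\R^{N-1}_y\setminus\Sigma)\big)$. So one works with $w$ where it lives rather than trying to bridge the blades. Your H\"older step survives and matches the paper's; everything built around $E_\ep$ has to be discarded and replaced by this direct domain-decomposition argument.
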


\begin{Rk}\label{rk:quasi-subsolution}
There are indeed cases where the inequalty \eqref{rho<v} does not hold everywhere in $\Omega$ for large values of $\lambda$; in other words, we may have $D^\lambda\ne\emptyset$ for large $\lambda$. For example, if some parts of $K^\ep$ have tiny reservoir-shaped pockets with a sufficiently narrow entrance as shown in Figure~\ref{fig:pocket}, then the value of $\bar v$ remains small inside those pockets by Theorem~\ref{th:incomplete}, therefore the inequality \eqref{rho<v} cannot hold in those pockets. Note that the condition \eqref{P2} is satisfied provided the pockets are tiny enough.
\end{Rk}

\begin{figure}[h]
		\begin{center}
			\includegraphics[width=0.55\textwidth]
			{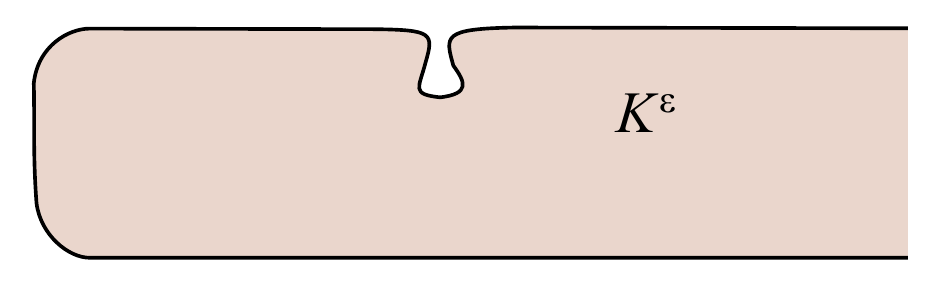}
		\end{center}
		\vspace{-10pt}
		\caption{A tiny pocket on the surface of $K^\ep$, which prevents complete invasion.}
		\label{fig:pocket}
\end{figure}


\section{Complete and incomplete invasions}\label{s:complete}

In this section we prove Theorem~\ref{th:complete} on a sufficient condition for complete invation and Theorem~\ref{th:incomplete} on an example of incomplete invasion.


\subsection{Proof of Theorem~\ref{th:complete} for complete invasion}\label{ss:proof-complete}

As we mentioned earlier, the proof is basically the same as that of Theorem 6.4 in \cite{BHM1}, though the notation here is simpler as we are considering directional convexity only in direction $x_1$. 

\begin{proof}[Proof of Theorem~\ref{th:complete}] 
Let $\rho=\rho(x_1)$ be as in \eqref{rho} and define a function $W^\lambda$ on $\R^N$ by
\begin{equation}\label{rho-lambda}
W^\lambda(x):=\max\{ \rho(x_1-\lambda), \, \rho(M-x_1-\lambda)\}.
\end{equation}
This function is symmetric with respect to the hyperplane $\{x_1=M/2\}$ and monotone decreasing (resp. increasing) in the region $\{x_1\leq M/2\}$ (resp. $\{x_1\geq M/2\}$). When $\lambda<0$, the support of~$W^\lambda$ is the set $\{x_1\leq\lambda\}\cup\{x_1\geq M-\lambda\}$, which does not touch the wall $K$. Therefore, $W^\lambda|_\Omega$ is a subsolution of \eqref{E} for all $\lambda<0$. Furthermore, since we are assuming that propagation occurs, we have $\bar{v}(x)\to 1$ as $x_1\to\pm\infty$. Therefore, we have
\begin{equation}\label{W>v}
W^\lambda|_\Omega(x)<\bar{v}(x)\quad \hbox{for all} \ \ x\in\Omega
\end{equation}
if $\lambda$ is sufficiently negative.

Now we increase $\lambda$ continuously. Since $W^\lambda|_\Omega$ is a subsolution for all $\lambda<0$, \eqref{W>v} continues to hold up to $\lambda=0$ by the strong maximum principle. 
Once $\lambda$ becomes positive, the support of $W^\lambda$ meets the wall $K$, but $W^\lambda|_\Omega$ remains to be a subsolution for all $\lambda> 0$. To see this, recall that $K$ is directionally convex and that $\rho$ is a monotone decreasing function. Therefore
\[
\frac{\partial W^\lambda}{\partial\nu}\leq 0 \quad \hbox{on}\ \ \partial\Omega\setminus\{x_1=M/2\}.
\]
Furthermore, at $x_1=M/2$,  $W^\lambda$ has a positive derivative gap. Consequently, $W^\lambda|_\Omega$ is a subsolution for all $\lambda>0$. We can therefore increase $\lambda$ continuously in the region $\lambda>0$, to obtain \eqref{W>v} for all $\lambda\in\R$. Hence
\[
\bar{v}(x) \geq \lim_{\lambda\to+\infty}W^\lambda(x)=b\quad \hbox{on}\ \ \Omega.
\]
Since $b>\alpha$, a simple comparison argument shows that $\bar{v}(x)=1$ everywhere. The proof of the theorem is complete.
\end{proof}


\subsection{Proof of Theorem~\ref{th:incomplete} for incomplete invasion}

Here we restate Theorem~\ref{th:incomplete} in a more precise manner and prove it.  The method of the proof is basically the same as the proof of Theorem~\ref{th:blocking-localized} for blocking given in Subsection~\ref{ss:blocking1}.  We first specify the structure of the ``reservoir''.

A typical image of a reservoir is shown in Figure~\ref{fig:incomplete} (left). The mouth of the reservoir can face in any direction. Let ${\boldsymbol e}$ denote the unit vector pointing toward the mouth of the reservoir. The reservoir consists of two open subdomains $\Omega_0$ and $\Omega_{-}$: the former represents the narrow entrance path and the latter the interior of the reservoir (Figure~\ref{fig:complete} (right)). Here $z:=x\cdot{\boldsymbol e}$ denotes the coordinate in the direction ${\boldsymbol e}$. We assume that
\[
\partial\Omega_0\cap\Omega\subset \{z=a\}\cup\{z=b\},\quad 
\partial\Omega_{-}\cap\Omega\subset \{z=b\},
\]
as shown in Figure~\ref{fig:incomplete} (right). We set $\Gamma:=\partial\Omega_0\cap\{z=a\}$, the outer-most boundary of $\Omega_0$.
The entire reservoir is the domain $\Omega_{res}:=\Omega_0\cup(\overline{\Omega}_{-}\cap\Omega)$.

\begin{figure}[h]
\begin{center}
\includegraphics[width=0.72\textwidth]{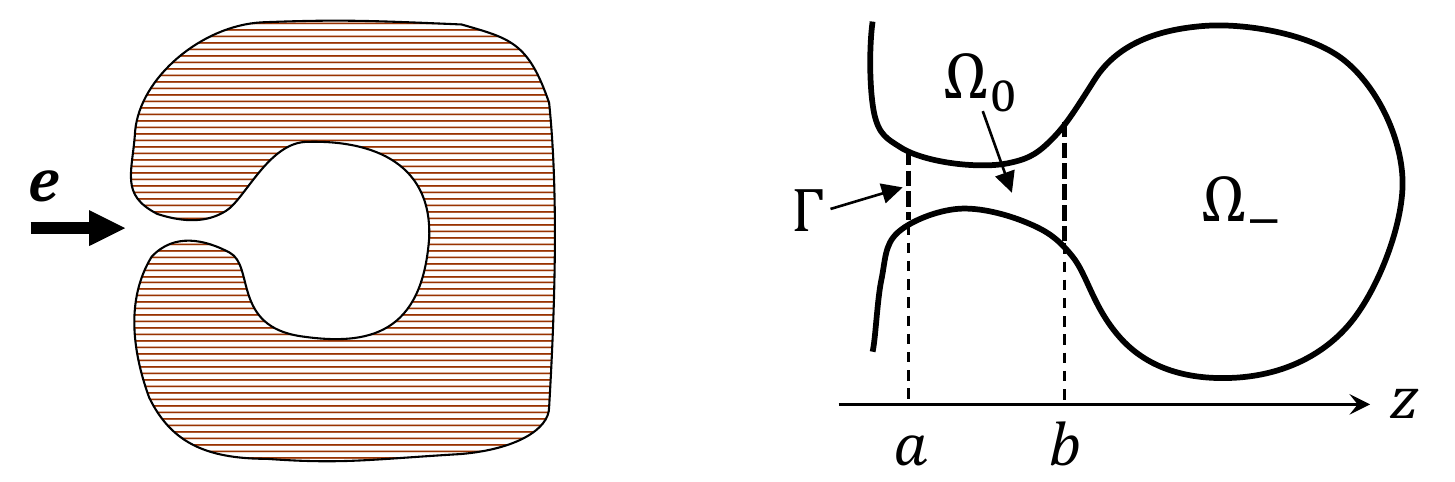}
\end{center}
\vspace{-10pt}
\caption{Example of a reservoir-like configuration and its magnified view.}
\label{fig:incomplete}
\end{figure}

Next let $\delta\in(0,\alpha]$, $\mu>0$, $\sigma>0$ be the constants satisfying \eqref{W2}-\eqref{W1}, that is,
\[
-F(s)+\mu (s-\delta)^2\geq \sigma  \quad \hbox{for any}\ \ s\in\R,
\]
\[
-F(s)+\mu (s-\bar s)^2\geq 0 \quad \hbox{for any}\ \ s\in\R, \ \bar s\in(-\infty,\delta],
\]
where $F(s)=\int_0^s f(r)dr$, with $f$ extended as in~\eqref{fextended}. We decompose the domain $\Omega_{-}$ into a union of subdomains $D_j\subset\Omega_{-} \;(j=1,2,\ldots, m)$ with uniformly Lipschitz boundaries such that
\[
D_i\cap D_j=\emptyset\ (i\ne j),\quad \ \Omega_{-} \subset \bigcup_{j=1}^m \overline{D}_j,
\]
\[
\int_{D_j}|\nabla \phi|^2 dx \geq 2\mu \int_{D_j}\left(\phi-\bar\phi\right)^2 dx\quad 
{}^\forall\phi\in H^1(D_j) \ \ (j=1,2,\ldots,m),
\]
where $\bar\phi$ denotes the average of $\phi$ over $D_j$. We set
\[
D_{\min}:=\min\{|D_2|,|D_2|.\ldots,|D_m|\}.
\]
See Remark~\ref{rk:Dj} on such decomposition and the meaning of the constant $\mu$. If the size of~$\Omega_{-}$ is very large, we need to split it into subdomains of appropriate sizes, otherwise we can simply set $m=1$ and $D_1=\Omega_{-}$.

Our goal is to construct an upper barrier that blocks the invasion of $\bar{u}$ into the reservoir. Such a barrier can be constructed as a stationary solution $V(x)$ of the following problem:
\begin{equation}\label{V2}
\left\{
\begin{array}{ll}
\Delta V +f(V)=0, \ \ & x\in \Omega_{res},\\[3pt]
V=1,\ \ & x\in\Gamma,\\[3pt]
\dfrac{\partial V}{\partial \nu} = 0, \ \ & x\in \partial\Omega_{res}\setminus\Gamma
\end{array}\right.
\end{equation}
that satisfies
\begin{equation}\label{Dj-V2}
\frac{1}{|D_j|} \int_{D_j} V(x) dx \leq \delta \quad \hbox{for all} \ \ \ j=1,2,\ldots,m.
\end{equation}
To do so, we first define the following functional which is an analog of $J_{-1}$ in \eqref{J-1}:
\[
J_{res}(w):=\int_{\Omega_0}\left( \frac{|\nabla w|^2}{2}-F(w)+F(1)\right)dx
+\int_{\Omega_{-}}\left( \frac{|\nabla w|^2}{2}-F(w)\right)dx
\]
and minimize this functional on the following set of functions
\[
X_\delta=\left\{ w\in H^1(\Omega_{res}) \,\Big|\, w_\Gamma=1,\; \frac{1}{|D_j|} \int_{D_j} w(x) dx \leq \delta \; (j=1,2,\ldots,m) \right\}.
\]
Let $V$ be the global minimizer of $J_{res}$ over the set $X_\delta$ and define
\[
\zeta (x) = 
\begin{cases}
\ \dfrac{b- z}{b-a} \ \  &\text{if}\quad x\in\Omega_0, \\[2pt]
\ \, 0 \quad& \text{if}\quad x\in\Omega_{-}.
\end{cases}
\]
Since $\zeta\in X_\delta$, we have
\[
J_{res}(V)\leq J_{res}(\zeta)\leq |\Omega_0|\left(\frac{1}{2(b-a)^2}-F(\alpha)+F(1)\right).
\]

Now assume that the following inequality holds:
\begin{equation}\label{ep2}
|\Omega_0|< \sigma D_{\min} \left(\frac{1}{2(b-a)^2}-F(\alpha)+F(1)\right)^{-1}.
\end{equation}
Then, arguing as in the proof of Theorem~\ref{th:blocking-localized}, we see that 
\eqref{Dj-V2} holds strictly, that is,
\[
\frac{1}{|D_j|} \int_{D_j} V(x) dx < \delta \quad \hbox{for all} \ \ \ j=1,2,\ldots,m.
\]
This implies that $V$ lies in the interior of $X_\delta$, hence it satisfies the Euler--Lagrange equation~\eqref{V2}.  Since $\Omega_{res}$ is bounded, $V$ is uniformly positive in $\Omega_{res}$. Therefore $\bar{u}(t,x)<V(x)$ in $\Omega_{res}$ for $t$ sufficiently negative. Hence, by the comparison principle, $\bar{u}(t,x)<V(x)$ for all $t\in\R$, which implies $\bar{v}(x)\leq V(x)$ in $\Omega_{res}$. Combining this and \eqref{Dj-V2}, we see that $\bar{v}$ is not identically equal to $1$. Summarizing, we have proved the following theorem.

\begin{Th}\label{th:incomplete2}
Let $\Omega_0$ and $\Omega_{-}$ be as above, and assume that \eqref{ep2} holds. Then $\bar{v}<1$. In particular, the complete invasion does not occur even if propagation takes place.
\end{Th}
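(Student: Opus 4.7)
The plan is to construct a stationary super-solution $V$ inside the reservoir that pins $\bar v$ strictly below $1$ on $\Omega_{-}$, following the same variational scheme as in the proof of Theorem~\ref{th:blocking-localized}, with the hyperplane $\{x_1=-1\}$ now replaced by the entrance $\Gamma$ of the reservoir. First I would check that $J_{res}$ is well defined and bounded below on the constrained class $X_\delta$: on each $D_j$ the Poincar\'e--Wirtinger inequality combined with \eqref{W1} (applied with $\bar s$ equal to the mean of $w$ on $D_j$, which lies in $(-\infty,\delta]$ by the constraint) ensures that $\int_{D_j}(\tfrac12|\nabla w|^2-F(w))\,dx\ge 0$, while on $\Omega_0$ the shifted integrand $\tfrac12|\nabla w|^2-F(w)+F(1)$ is pointwise nonnegative because $F(1)=\max F$. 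Standard weak lower semi-continuity and coercivity then yield a global minimizer $V\in X_\delta$.

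The central step is to show that the mean constraint in the definition of $X_\delta$ is not saturated at $V$, i.e.\ that \eqref{Dj-V2} holds with strict inequalities. Testing $J_{res}$ against the linear transition function $\zeta$ gives the upper bound
\[
J_{res}(V)\le J_{res}(\zeta)\le |\Omega_0|\left(\frac{1}{2(b-a)^2}-F(\alpha)+F(1)\right).
\]
If saturation occurred at some index $j^*$, the sharper inequality \eqref{W2} applied on $D_{j^*}$ together with Poincar\'e--Wirtinger, and the nonnegativity of the integrand on the remaining $D_j$'s and on $\Omega_0$, would force $J_{res}(V)\ge\sigma |D_{j^*}|\ge\sigma D_{\min}$, contradicting \eqref{ep2}. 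Hence $V$ lies in the relative interior of $X_\delta$ in the $H^1$ topology, where the finiteness of the collection $\{D_j\}_{j=1}^m$ makes this cleaner than in the unbounded case of Section~\ref{s:blocking}, and therefore $V$ solves the Euler--Lagrange system \eqref{V2}. The strong maximum principle then gives $V>0$ on $\overline{\Omega_{res}}$.

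Finally I would use $V$ as a stationary barrier against $\bar u$ on the bounded domain $\Omega_{res}$. Since $V>0$ there and \eqref{ubar-phi} forces $\bar u(t,\cdot)\to 0$ locally uniformly as $t\to-\infty$, there is some $T_0$ with $\bar u(T_0,\cdot)<V$ on $\overline{\Omega_{res}}$. On $\Gamma$ we have $\bar u\le 1=V$ for all $t$, while on $\partial\Omega_{res}\setminus\Gamma\subset\partial K$ both functions satisfy the same homogeneous Neumann condition. The parabolic comparison principle on the bounded cylinder $[T_0,\infty)\times\overline{\Omega_{res}}$ then yields $\bar u(t,x)\le V(x)$ for all $t\ge T_0$ and $x\in\overline{\Omega_{res}}$; letting $t\to+\infty$ gives $\bar v\le V$ on $\Omega_{res}$. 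Since $V$ has mean strictly less than $\delta<1$ on each $D_j$, so does $\bar v$, and hence $\bar v\not\equiv 1$ on $\Omega$, which is incompatible with complete invasion even when propagation occurs. The main obstacle is the strict interiority step: it relies on the quantitative gap between the energy upper bound coming from $\zeta$ and the energy lower bound $\sigma D_{\min}$ produced by saturation, and it is precisely this gap that condition \eqref{ep2} encodes.
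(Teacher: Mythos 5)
Your proposal is correct and follows essentially the same route as the paper: the same variational construction of the barrier $V$ as a constrained minimizer of $J_{res}$ over $X_\delta$, the same test function $\zeta$ to obtain the energy upper bound, the same saturation argument via \eqref{W2} to show the constraint is not attained and hence $V$ solves \eqref{V2}, and the same comparison of $\bar u$ with $V$ on the bounded reservoir domain to conclude $\bar v\le V$ and thus $\bar v\not\equiv 1$. The only cosmetic difference is that you spell out the Dirichlet/Neumann boundary compatibility on $\Gamma$ and $\partial\Omega_{res}\setminus\Gamma$ explicitly before invoking the parabolic comparison principle, whereas the paper compresses this step.
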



\appendix

\section{Appendix: relative Poincar\'e inequality}
\label{s:poincare}

In this Appendix, we prove Lemma \ref{lem:poincare}, which we have used in the proof of Theorem \ref{th:parallel} in Subsection \ref{ss:proof-parallel}.  We restate this lemma:

\begin{Lemma}\label{lem:poincare-a}
There exist a constant $\eta_0>0$ that is independent of $\ep$ and a constant $C>0$, depending on $\eta_0$, but again independent of $\ep$, such that, for any nonnegative $w\in H^1_{\mathcal P}(\R^N\setminus \left( [0,M]\times {\mathcal N}_\ep(\Sigma)\right)$ the following inequality holds
\begin{equation}\label{poincare-a}
\int_{supp(w)\cap\Delta_{\mathcal P}}|\nabla w|^2 dx \geq C\, |supp(w)\cap\Delta_{\mathcal P}|^{-2/N} \int_{supp(w)\cap\Delta_{\mathcal P}}w^2 dx
\end{equation}
so long as $|supp(w)\cap\Delta_{\mathcal P}|\leq \eta_0$.
\end{Lemma}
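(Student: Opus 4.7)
The plan is to deduce the inequality from the classical Gagliardo--Nirenberg--Sobolev embedding and H\"older's inequality. Let $2^* = 2N/(N-2)$ when $N \geq 3$ (and replace $2^*$ by any fixed $q \in (2,\infty)$ when $N = 2$). H\"older's inequality applied on $S := \mathrm{supp}(w)\cap\Delta_{\mathcal{P}}$ gives
\[
\int_S w^2\,dx \;\leq\; |S|^{\,1 - 2/2^*}\,\|w\|_{L^{2^*}(S)}^{2} \;=\; |S|^{\,2/N}\,\|w\|_{L^{2^*}(S)}^{2},
\]
so it suffices to produce a Sobolev-type inequality of the form $\|w\|_{L^{2^*}}^{2} \leq C\,\|\nabla w\|_{L^{2}}^{2}$ with $C$ independent of $\ep$ for $\mathcal{P}$-periodic functions $w$ defined on the perforated domain. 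Equivalently, one may aim at a Nash-type inequality $\|w\|_2^2 \leq C\,|\mathrm{supp}(w)\cap\Delta_{\mathcal{P}}|^{\,2/N}\,\|\nabla w\|_2^2$, which is a direct consequence of the Sobolev step and H\"older.

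To obtain this uniformly in $\ep$, I would construct an extension operator
\[
E_\ep : H^1_{\mathcal{P}}\bigl(\R^N \setminus ([0,M]\times\mathcal{N}_\ep(\Sigma))\bigr) \longrightarrow H^1_{\mathcal{P}}(\R^N)
\]
whose operator norm is bounded by a constant independent of $\ep$. The obstacle $[0,M]\times\mathcal{N}_\ep(\Sigma)$ is a bounded slab in $x_1$ contained in the $\ep$-tube around the fixed codimension-one set $[0,M]\times\Sigma$. Because $\Sigma \subset \R^{N-1}_y$ is a locally finite union of smooth $(N-2)$-dimensional manifolds meeting transversely, one period cell admits a finite cover by tubular charts in which each stratum of $\Sigma$ is locally a hyperplane, so that $\mathcal{N}_\ep(\Sigma)$ is locally a slab of thickness $2\ep$. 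Inside each chart, I would extend $w$ across the slab by reflection about its median surface (or by linear interpolation between the two faces), and then patch the local extensions by a partition of unity subordinate to the chosen cover. Since the cover depends only on the fixed geometry of $\Sigma$ and the reflection across a slab is an isometry up to curvature corrections controlled by the Lipschitz norms of $\Sigma$, the resulting extension has $\|\nabla E_\ep w\|_{L^2(\Delta_{\mathcal{P}})} \leq C\|\nabla w\|_{L^2}$ with $C$ independent of $\ep$. The classical Sobolev inequality applied to $E_\ep w$ on one periodicity cell (exploiting that the extended support still has finite measure comparable to $|S|$) then yields the desired bound.

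The main obstacle will be the treatment of the regions where different strata of $\Sigma$ meet transversely: there, $\mathcal{N}_\ep(\Sigma)$ is not a single slab but a union of slabs meeting at nonzero angles, and a simple reflection across any one of them does not by itself give a well-defined extension. The transversality assumption on $\Sigma$ is what resolves this: it ensures that the angles between neighboring strata are bounded uniformly away from zero, so that one can perform the extension in stages---first across one stratum's slab, then across the next---with total cost controlled by a fixed angle bound. Once the uniform extension $E_\ep$ is established, combining the classical Sobolev inequality with the H\"older step above yields $\int_S w^2 \leq C\,|S|^{2/N}\int_S|\nabla w|^2$. The restriction $|S| \leq \eta_0$ then plays only the role of keeping $|S|^{2/N}$ in a range where the constants obtained are admissible, which is exactly the form stated.
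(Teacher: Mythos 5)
The H\"older reduction at the start of your argument is correct and is also, implicitly, the route taken by the paper. The gap is in the proposed uniform extension operator $E_\ep$: such an operator does not exist with $\ep$-independent norm, because $w$ is allowed to take very different values on the two sides of the thin slab $[0,M]\times\mathcal{N}_\ep(\Sigma)$. Concretely (say $N=2$, $\Sigma=\{0\}$ in the period cell, wall $[0,M]\times(-\ep,\ep)$), take $w=h(x_1)g(y)$ where $h$ is a fixed bump in $(0,M)$ and $g$ is supported in $(-\delta,0)$ with $g\equiv 1$ near $0$; for $\ep<\delta/2$ the trace of $w$ on the lower face $\{y=-\ep\}$ is $h(x_1)$ while the trace on the upper face $\{y=\ep\}$ is $0$, yet $w\in H^1$ because the two faces are joined only through the far regions $x_1<0$ and $x_1>M$, where $w$ vanishes. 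Any $H^1$ extension into the slab must pay, by a one--dimensional Poincar\'e estimate in the $y$-variable,
\[
\int_{(0,M)\times(-\ep,\ep)}|\nabla (E_\ep w)|^2\,dx \;\ge\; \frac{1}{2\ep}\int_0^M \bigl|w(x_1,\ep)-w(x_1,-\ep)\bigr|^2\,dx_1 \;\sim\; \frac{1}{\ep}\,\|h\|_{L^2}^2,
\]
whereas $\|\nabla w\|_{L^2}^2\sim 1/\delta$ is independent of $\ep$. Hence $\|E_\ep\|\gtrsim \sqrt{\delta/\ep}\to\infty$. Reflection across the median surface fares no better, since the reflected values do not match the values on the opposite face. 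This also explains why the transversality of $\Sigma$ is not the main difficulty here: the obstruction already appears for a single flat stratum.

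The paper sidesteps the extension issue entirely. It invokes a Sobolev-type inequality of Buttazzo and Velichkov (Proposition~\ref{prop:poincare}) valid on a uniformly Lipschitz domain $\widehat\Omega$ for functions with small support that vanish only on the \emph{interior} part $\partial D\cap\widehat\Omega$ of the boundary of their support, with no condition on $\partial D\cap\partial\widehat\Omega$. One then decomposes $\Delta_{\mathcal P}$ into $\widehat\Omega_-$, $\widehat\Omega_0^{\,\ep}$ and $\widehat\Omega_+$, applies the inequality on each piece, and sums. The perforation is handled by observing that each connected component of $\widehat\Omega_0^{\,\ep}$ is close, via a diffeomorphism whose Jacobian is uniformly near the identity, to the corresponding component of $\widehat\Omega_0^{\,0}$, so that the Buttazzo--Velichkov constant is bounded independently of $\ep$. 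The crucial point, which your extension would have to replace, is that $w$ need not vanish on the artificial boundaries $\{x_1=0\}$, $\{x_1=M\}$, and $\partial\mathcal{N}_\ep(\Sigma)$ introduced by the decomposition. Finally, note that your Sobolev-plus-H\"older reduction does not directly produce the stated exponent $|S|^{-2/N}$ when $N=2$, since the borderline Sobolev embedding gives no single admissible $q$; the paper's cited proposition has a separate two-dimensional statement precisely for this reason.
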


The above lemma follows from Proposition 2.3 (2) of \cite{BV}. We state this proposition for the special case where $w$ belongs to $H^1\cap C$. 

\begin{Prop}[\cite{BV}]\label{prop:poincare}
Let $\widehat\Omega$ be a domain in $\R^N$, not necessarily bounded, with a uniformly Lipschitz boundary, and let $\eta_0$ be a real number with $0<\eta_0<|\widehat\Omega|$. Then there exists a constant $C$ depending only on $\widehat\Omega$ and $\eta_0$ such that, for any open set $D\subset\widehat\Omega$ satisfying $|D|\leq\eta_0$ and any function $w\in H^1(\widehat\Omega)\cap C(\widehat\Omega)$ such that $w=0$ in $\widehat\Omega\setminus D$, the following inequality holds:
\begin{equation}\label{sobolev-general}
\begin{split}
\int_{D}|\nabla w|^2 dx \geq C\left(\int_{D} |w|^{2N/(N-2)} dx\right)^{(N-2)/N} \ \ & \hbox{if}\ \ N\geq 3,\\
\int_{D}|\nabla w|^2 dx \geq C \left(\int_{D}|w|^{\gamma-2}dx\right)^{-1}\int_{D}|w|^\gamma dx\ \ \ & \hbox{if}\ \ N=2,
\end{split}
\end{equation}
where $\gamma\ge2$ is arbitrary if $N=2$.
\end{Prop}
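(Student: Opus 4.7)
The key idea is to combine a Sobolev-type embedding on $\widehat\Omega$, available with a constant depending only on $\widehat\Omega$ thanks to the uniformly Lipschitz boundary, with an absorption step that exploits $|D|\le\eta_0<|\widehat\Omega|$. Since $\partial\widehat\Omega$ is uniformly Lipschitz, there is a bounded extension operator $E:H^1(\widehat\Omega)\to H^1(\R^N)$, and hence the embedding $H^1(\widehat\Omega)\hookrightarrow L^{2^*}(\widehat\Omega)$ for $N\ge 3$ (with $2^*=2N/(N-2)$), and $H^1(\widehat\Omega)\hookrightarrow L^q(\widehat\Omega)$ for every finite $q$ when $N=2$, with constants depending only on $\widehat\Omega$. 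The hypothesis $w\equiv 0$ on $\widehat\Omega\setminus D$ is used to convert every $L^p(\widehat\Omega)$-norm of $w$ into an $L^p(D)$-norm, after which H\"older's inequality on $D$ brings out a small factor $|D|^{\kappa}$ that can be made as small as we like by taking $\eta_0$ small.

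For $N\ge 3$ the Sobolev inequality reads
\[
\|w\|_{L^{2^*}(D)}^2 \le C_S\bigl(\|\nabla w\|_{L^2(D)}^2+\|w\|_{L^2(D)}^2\bigr),
\]
and H\"older gives $\|w\|_{L^2(D)}^2\le|D|^{2/N}\|w\|_{L^{2^*}(D)}^2$. Substituting yields
\[
\bigl(1-C_S|D|^{2/N}\bigr)\|w\|_{L^{2^*}(D)}^2\le C_S\|\nabla w\|_{L^2(D)}^2,
\]
and picking $\eta_0$ small enough that $C_S\,\eta_0^{2/N}\le 1/2$ produces \eqref{sobolev-general} with $C=(2C_S)^{-1}$. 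The argument for $N=2$ proceeds along the same lines, but starting from the Gagliardo-Nirenberg inequality in the plane,
\[
\int_{\R^2}|u|^\gamma\,dx\le C_\gamma\int_{\R^2}|\nabla u|^2\,dx\,\int_{\R^2}|u|^{\gamma-2}\,dx\qquad(\gamma\ge 2),
\]
valid for $u\in H^1(\R^2)$ (proved by Ladyzhenskaya's slicing trick applied to $|u|^{\gamma/2}$ in each coordinate direction, together with Cauchy-Schwarz, extending the classical $\gamma=4$ case to arbitrary $\gamma\ge 2$). Applying it to $Ew$ and using the extension bound to restrict to $\widehat\Omega$ gives
\[
\int_D|w|^\gamma \le C'\bigl(\|\nabla w\|_{L^2(D)}^2+\|w\|_{L^2(D)}^2\bigr)\int_D|w|^{\gamma-2}.
\]
Next, H\"older $\|w\|_{L^2(D)}^2\le|D|^{(\gamma-2)/\gamma}\|w\|_{L^\gamma(D)}^2$ combined with $\int_D|w|^{\gamma-2}\le|D|^{2/\gamma}(\int_D|w|^\gamma)^{(\gamma-2)/\gamma}$ yields, after multiplication,
\[
\|w\|_{L^2(D)}^2\int_D|w|^{\gamma-2}\le|D|\,\int_D|w|^\gamma,
\]
so that $\int_D|w|^\gamma\le C'\|\nabla w\|_{L^2(D)}^2\int_D|w|^{\gamma-2}+C'|D|\int_D|w|^\gamma$. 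Choosing $\eta_0$ with $C'\eta_0\le 1/2$ closes the inequality.

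The main obstacle is to keep all constants uniform in the set $D$: they should depend only on $\widehat\Omega$, $\eta_0$, and the chosen exponent $\gamma$. The Sobolev/Gagliardo-Nirenberg constants and the norm of $E$ are fixed once the Lipschitz chart of $\partial\widehat\Omega$ is, and the absorption step is driven purely by $|D|\le\eta_0$, so uniformity is automatic once the extension is in hand. The subtle point lies in the $N=2$ case, where one cannot derive the desired weighted form $\bigl(\int_D|w|^{\gamma-2}\bigr)^{-1}\int_D|w|^\gamma$ from plain Sobolev embedding plus H\"older; the Ladyzhenskaya-type slicing argument in the plane is essential, since it produces a factor of $\int_D|w|^{\gamma-2}$ on the right-hand side rather than a power of $\|\nabla w\|_{L^2}$ alone.
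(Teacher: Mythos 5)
This proposition is quoted in the paper from \cite{BV} without proof, so there is no in-paper argument to compare against; your proof has to stand on its own. The extension--Sobolev--absorption scheme is sound as far as it goes, and it does establish the inequality whenever $\eta_0$ is small enough that $C_S\,\eta_0^{2/N}\le 1/2$ (resp.\ $C'\eta_0\le 1/2$ for $N=2$) --- which happens to be all that Lemma~\ref{lem:poincare} needs, since there $\eta_0$ is at the authors' disposal. But it does not prove the proposition as stated. In the statement, $\eta_0$ is an \emph{arbitrary prescribed} number in $(0,|\widehat\Omega|)$ and the constant $C$ is asserted to exist for every such $\eta_0$; your argument silently converts ``let $\eta_0$ be given'' into ``choose $\eta_0$ small''. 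Once $\eta_0$ exceeds the threshold $(2C_S)^{-N/2}$, the factor $1-C_S|D|^{2/N}$ can become nonpositive and the absorption step collapses. Symptomatically, your proof never invokes the hypothesis $\eta_0<|\widehat\Omega|$, i.e.\ never uses that $w$ vanishes on the set $\widehat\Omega\setminus D$, whose measure is at least $|\widehat\Omega|-\eta_0>0$; that is exactly the information one needs in the regime of large $\eta_0$ (for bounded $\widehat\Omega$ one can close the gap by a Rellich compactness/contradiction argument applied to a normalized minimizing sequence, and in the general uniformly Lipschitz, possibly unbounded, case one needs the localization/covering argument of \cite{BV}).

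Two smaller points in the case $N=2$. First, passing from the Gagliardo--Nirenberg inequality for $Ew$ on $\R^2$ back to the set $D$ requires bounding $\int_{\R^2}|Ew|^{\gamma-2}$ by a constant times $\int_D|w|^{\gamma-2}$; this is not part of the $H^1$-boundedness of $E$ and must be justified separately. It does hold for a reflection-type extension, which obeys a pointwise bound $|Ew(x)|\le C\,|w(\Phi(x))|$ with $\Phi$ bi-Lipschitz and is therefore bounded on $L^s$ for every $s>0$, including the non-normed range $0<\gamma-2<1$; a generic abstract extension operator need not have this property. Second, for $2\le\gamma<4$ the quantity $\int_{\R^2}|Ew|^{\gamma-2}$ may a priori be infinite for a general $H^1$ function; this is harmless here only because $w$ is supported in a set of finite measure and the extension can be taken compactly supported, but it deserves a sentence.
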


By H${\rm \ddot{o}}$lder's inequality, 
\[
\int_{D} w^2 dx \leq \left(\int_{D} |w|^{2N/(N-2)} dx\right)^{(N-2)/N}\left(\int_D dx\right)^{2/N}.
\]
Combining this with \eqref{sobolev-general} for $N\geq 3$, we obtain
\begin{equation}\label{poincare-general}
\int_{D}|\nabla w|^2 dx \geq C |D|^{-2/N}\int_{D} w^2 dx.
\end{equation}
In the case $N=2$, the above inequality follows by setting $\gamma=2$ in \eqref{sobolev-general}.

An important point of the estimate \eqref{poincare-general} is that, unlike the standard Poincar\'e inequality, $w$ is required to be $0$ only on $\partial D\cap\widehat\Omega$ and no restriction is imposed on the value of $w$ on~$\partial D\cap\partial\widehat\Omega$. This aspect of estimate \eqref{poincare-general} will be important in the proof of Lemma~\ref{lem:poincare-a}.

\begin{proof}[Proof of Lemma~\ref{lem:poincare-a}]
We divide the integral on the left-hand side of \eqref{poincare-a} as follows:
\[
I_{-}=\int_{supp(w)\cap \widehat\Omega_{-}}|\nabla w|^2 dx,\quad
I_{0}=\int_{supp(w)\cap \widehat\Omega^{\ep}_{0}}|\nabla w|^2 dx,\quad
I_{+}=\int_{supp(w)\cap \widehat\Omega_{+}}|\nabla w|^2 dx,
\]
where
\[
\widehat\Omega_{-}=\Delta_{\mathcal P}\cap\{x_1<0\},\quad
\widehat\Omega_{+}=\Delta_{\mathcal P}\cap\{x_1>M\},\ \ 
\]
\[
\widehat\Omega^{\,\ep}_{0}=\Delta_{\mathcal P}\cap\big((0,M)\times \left(\R^{N-1}_y\setminus{\mathcal N}_\ep(\Sigma)\right)\big).
\]
These are all domains with uniformly Lipschitz boundaries, with $\widehat\Omega_{\pm}$ being unbounded, while $\widehat\Omega^{\,\ep}_{0}$ is bounded and $\ep$-dependent.  Note also that $D^\lambda$ is bounded, since $D^\lambda\subset\overline{R^\lambda}$. 

We first consider the integral $I_0$. This integral is taken over the region $supp(w)\cap\hspace{1pt} \widehat\Omega^{\,\ep}_{0}$. The set $\widehat\Omega^{\,\ep}_{0}$ is a close approximation of $\widehat\Omega^{\,0}_{0}:=\Delta_{\mathcal P}\cap\left((0,M)\times \left(\R^{N-1}_y\setminus\Sigma\right)\right)$, which is a bounded open set having finitely many connected components each of which being a bounded domain with a Lipschitz boundary, by virtue of the assumption on $\Sigma$ given in subsection~\ref{ss:main-parallel-blade}. Each connected component of $\widehat\Omega^{\,\ep}_{0}$ is also a bounded domain with a Lipschitz boundary and is a close approximation of the corresponding connected component of $\widehat\Omega^{\,0}_{0}$. It is not difficult to see that there is a diffeomorphism between each connected component of $\widehat\Omega^{\,\ep}_{0}$ and the corresponding connected component of $\widehat\Omega^{\,0}_{0}$ whose Jacobian matrix is uniformly close to identity for all sufficiently small $\ep>0$. Thus the estimate \eqref{poincare-general} applies to each of the connected components of $\widehat\Omega^{\,\ep}_{0}$ with a constant $C=C_0$ that is independent of $\ep$. Consequently, by Proposition~\ref{prop:poincare} and \eqref{poincare-general}, if $\eta_0$ is chosen relatively small, we have
\begin{equation}\label{I0}
I_0\geq C_0 |supp(w)\cap\Delta_{\mathcal P}|^{-2/N}\int_{supp(w)\cap \widehat\Omega^{\ep}_{0}} w^2 dx,
\end{equation}
so long as $|supp(w)\cap\widehat\Omega^{\ep}_{0}|\leq \eta_0$ and $|supp(w)\cap\Delta_{\mathcal P}|\ne 0$.

Next, as regards the integral $I_{-}$ and $I_{+}$, since $\widehat\Omega_-$ and $\widehat\Omega_+$ are simple domains that do not depend on $\ep$, we can apply Proposition~\ref{prop:poincare} and \eqref{poincare-general} directly, to obtain
\begin{equation}\label{I+-}
I_{\pm}\geq C_\pm |supp(w)\cap\Delta_{\mathcal P}|^{-2/N}\int_{supp(w)\cap \widehat\Omega_{\pm}} w^2 dx,
\end{equation}
for some constant $C\pm>0$, provided that $|supp(w)\cap\widehat\Omega^{\pm}|\leq \eta_0$ and $|supp(w)\cap\Delta_{\mathcal P}|\ne 0$. Combining \eqref{I0} and \eqref{I+-}, we obtain \eqref{poincare-a}. The lemma is proved.
\end{proof}


\vskip 20pt

\end{document}